\providecommand\@dotsep{5}
\def\listtodoname{List of Todos}
\def\listoftodos{\@starttoc{tdo}\listtodoname}
\numberwithin{equation}{section}
\newcommand{\Om} {\Omega}
\newtheorem{Theorem}{Theorem}[section]
\newtheorem{Lemma}[Theorem]{Lemma}
\newtheorem{Corollary}[Theorem]{Corollary}
\newtheorem{Remark}[Theorem]{Remark}
\newtheorem{Definition}[Theorem]{Definition}
\begin{document}

\title[Mixed local and nonlocal quasilinear problems]
{Mixed local and nonlocal Sobolev inequalities with extremal and associated quasilinear singular elliptic problems}
\author{Prashanta Garain and Alexander Ukhlov}

\address[Prashanta Garain ]
{\newline\indent Department of Mathematics
	\newline\indent
Ben-Gurion University of the Negev
	\newline\indent
P.O.B. 653
\newline\indent
Beer Sheva 8410501, Israel
\newline\indent
Email: {\tt pgarain92@gmail.com} }

\address[Alexander Ukhlov]
{\newline\indent Department of Mathematics
\newline\indent
Ben-Gurion University of the Negev
\newline\indent
P.O.B.-653
\newline\indent
Beer Sheva 8410501, Israel
\newline\indent
Email: {\tt ukhlov@math.bgu.ac.il} }

\pretolerance10000

\begin{abstract}
In this article, we consider mixed local and nonlocal Sobolev $(q,p)$-inequalities with extremal in the case $0<q<1<p<\infty$. We prove  that the extremal  of such inequalities is unique up to a multiplicative constant that is associated with a singular elliptic problem involving the mixed local and nonlocal $p$-Laplace operator. Moreover, it is proved that the mixed Sobolev inequalities are  necessary and sufficient condition for the existence of weak solutions of such singular problems. As a consequence, a relation between the singular $p$-Laplace and mixed local and nonlocal $p$-Laplace equation is established. Finally, we investigate the existence, uniqueness, regularity and symmetry properties of weak solutions for such problems. 
\end{abstract}

\subjclass[2010]{35R11, 35J92, 35A23, 35J75}
\keywords{Mixed local and nonlocal $p$-Laplace operator, Sobolev inequality, extremal, singular problem, existence, uniqueness, regularity, symmetry.}

\maketitle

\section{Introduction}
Sharp Sobolev inequalities in both the local and nonlocal cases have been extensively studied in the literature to date and have a wide range of applications in the theory of partial differential equations. Sharp constants and the extremal functions of such inequalities  plays a crucial role (see for instance, Aubin \cite{Aubin}, Talenti \cite{Talenti}, Lieb \cite{Lieb}, Saloff-Coste \cite{Scoste}, Swanson \cite{Swanson}, Di Nezza-Palatucci-Valdinoci \cite{Hitchhiker'sguide} and the references therein) in geometric analysis and continuum mechanics \cite{PolyaSzego}. Recall the classical Sobolev $(q,p)$-inequality: Let $\Omega$ be a bounded domain in $\mathbb R^N$, then  
\begin{equation}\label{clssb}
S\left(\int_{\Om}|u|^q\,dx\right)^\frac{1}{q}\leq\left(\int_{\Om}|\nabla u|^p\,dx\right)^\frac{1}{p},\,\,1<q<p^{*}:={Np}/{(N-p)},
\end{equation}
for any function $u\in W_0^{1,p}(\Om)$, $1\leq p<N$, where $S$ is the Sobolev constant. The best constant 
\begin{equation}\label{clssbbc}
\lambda_q(\Om):=\inf_{0\neq u\in W_0^{1,p}(\Om)\setminus\{0\}}\frac{\int_{\Om}|\nabla u|^p\,dx}{\left(\int_{\Om}|u|^q\,dx\right)^\frac{p}{q}}
\end{equation}
of the inequality \eqref{clssb} is achieved at a solution $u_q\in W_0^{1,p}(\Om)$ of the following Dirichlet boundary value problem (see \^{O}tani \cite{Otani}, Franzina-Lamberti \cite{FL})
\begin{equation}\label{clssbeqn}
-\Delta_p u:=-\text{div}(|\nabla u|^{p-2}\nabla u)=\lambda_q(\Om)\|u\|_{L^q(\Om)}^{p-q}|u|^{q-2}u\text{ in }\Om,\,u=0\text{ on }\partial\Om,
\end{equation}

In the case $q=p$, by the Min-Max Principle, the quantity $\lambda_p(\Om)$ is equal to the first eigenvalue of the $p$-Laplace operator with the Dirichlet boundary condition, which is widely studied in the literature, see Lindqvist \cite{PLin2}, Belloni-Kawohl \cite{BK} and the references therein. 

In the case $q=p^*$ and $\Om=\mathbb{R}^N$ the minimization problem~(\ref{clssbbc}) was solved using symmetrization, we refer to Aubin \cite{Aubin}, Talenti \cite{Talenti} and the references therein. In the nonlocal case, see Di Nezza-Palatucci-Valdinoci \cite{Hitchhiker'sguide}, Lindgren-Lindqvist \cite{Ling}, Brasco-Lindgren-Parini \cite{BLP}, Brasco-Parini \cite{BrPr}, Franzina-Palatucci \cite{FP}, Nguyen-D\'{\i}az-Quoc-Hung \cite{Diaz} and the references therein.

For $0<q<1$, in the local case, the inequality \eqref{clssb} has been studied by Anello-Faraci-Iannizzotto \cite{GFA} for any $1<p<\infty$, where they proved that
\begin{equation}\label{lclssbbc}
\nu(\Om):=\inf_{u\in W_0^{1,p}(\Om)\setminus\{0\}}\left\{\int_{\Om}|\nabla u|^p\,dx:\int_{\Om}|u|^q\,dx=1\right\}
\end{equation}
is achieved at a solution $u_q\in W_0^{1,p}(\Om)$ of the singular $p$-Laplace equation,
\begin{equation}\label{lclssbeqn}
-\Delta_p u=\nu(\Om)u^{q-1}\text{ in }\Om,\,u>0\text{ in }\Om,\,u=0\text{ on }\partial\Om.
\end{equation}

In the nonlocal context, for any $0<q<1$, Sobolev type inequality in fractional Sobolev spaces is investigated by Ercole-Pereira \cite{EP1} for $u\in W_0^{s,p}(\Om)$ with $0<s<1<p<\infty$. Indeed, for a given nonnegative function $f\in L^m(\Om)\setminus\{0\}$ with some $m\geq 1$, they obtained that
\begin{equation}\label{lclssbbc1}
\zeta(\Om):=\inf_{u\in W_0^{s,p}(\Om)\setminus\{0\}}\left\{\int_{\Om}\frac{|u(x)-u(y)|^p}{|x-y|^{N+ps}}\,dx dy:\int_{\Om}|u|^q f\,dx=1\right\}
\end{equation}
is achieved at a solution $u_q\in W_0^{s,p}(\Om)$ of the singular fractional $p$-Laplace equation,
\begin{equation}\label{lclssbeqn1}
(-\Delta_p)^s u=\zeta(\Om)f(x)u^{q-1}\text{ in }\Om,\,u>0\text{ in }\Om,\,u=0\text{ in }\mathbb{R}^N\setminus\Om,
\end{equation}
where $(-\Delta_p)^s$ is the fractional $p$-Laplace operator given by 
\begin{equation}\label{fplap}
(-\Delta_p)^s u:=\text{P.V.}\int\limits_{\mathbb{R}^N}\frac{|u(x)-u(y)|^{p-2}\big(u(x)-u(y)\big)}{|x-y|^{N+ps}}\,dy,
\end{equation}
where P.V. denotes the principal value. See also Ercole-Pereira-Sanchis \cite{EPS,EP} and Bal-Garain \cite{BGmm} for related works.

In this article, one of our main purpose is to study the following mixed minimizing problem in a bounded Lipschitz domain $
\Omega\subset\mathbb{R}^N$, $N\geq 2$,
\begin{equation}\label{lclssbbc2}
\mu(\Om):=\inf_{u\in W_0^{1,p}(\Om)\setminus\{0\}}\left\{\int_{\Om}|\nabla u|^p\,dx+\int_{\Om}\frac{|u(x)-u(y)|^p}{|x-y|^{N+ps}}\,dx dy:\int_{\Om}|u|^{1-\delta} f\,dx=1\right\},
\end{equation}
where $0<\delta<1<p<\infty$, $0<s<1$ and $f$ is some nonnegative function belongs to $L^m(\Om)\setminus\{0\}$, $m\geq 1$,  (See Theorem \ref{thm5}). We prove that $\mu(\Om)$ is achieved at a solution of the following mixed local and nonlocal $p$-Laplace equation,
$$
-\Delta_p u+(-\Delta_p)^s u=\frac{f(x)}{u(x)^\delta}\text{ in }\Om,\,u>0\text{ in }\Om,\,u=0\text{ in }\mathbb{R}^N\setminus\Om.\leqno{\mathcal{(S)}}
$$

Further, we investigate the problem $\mathcal{(S})$ for any $\delta>0$. Let us discuss some related results. Due to fact that $\delta>0$, the nonlinearity in the problem $\mathcal{(S)}$ blows up near the origin. Such property is referred to as singularity and studied widely over the last three decades in the context of both local and nonlocal cases separately. We cite here some related works with no intent to furnish an exhaustive list.

In particular, for the local case, with a given nonnegative integrable function $f$, the singular $p$-Laplace equation
$$
-\Delta_p u=\frac{f(x)}{u(x)^\delta}\text{ in }\Omega,\,u>0\text{ in }\Om,\,u=0\text{ on }\partial\Om,\leqno{(\mathcal{P})}
$$
has been studied in the semilinear case $p=2$ by Crandall-Rabinowitz-Tartar \cite{CRT} followed by Boccardo-Orsina \cite{BocOr}, Alves-Goncalves-Maia \cite{Alves}, Canino-Degiovanni-Grandinetti-Sciunzi \cite{CaninoSci, Caninovar, Caninouni}. De Cave \cite{DeCave}, Canino-Sciunzi-Trombetta \cite{Canino} and Garain \cite{G} considered the quasilinear case to obtain existence, uniqueness and regularity results. In the perturbed  singular case, multiplicity results for the problem $\mathcal{(P)}$ have been obtained by Arcoya-M\'erida \cite{AM}, Arcoya-Boccardo \cite{arcoya} in the semilinear case; Giacomoni-Schindler-Tak\'{a}\v{c} \cite{GST}, Bal-Garain-Mukherjee \cite{BG, GMwgt} in the quasilinear case and the references therein.

In the nonlocal case, for $0<s<1$, the problem
$$
(-\Delta_p)^s u=\frac{f(x)}{u(x)^\delta}\text{ in }\Omega,\,u>0\text{ in }\Om,\,u=0\text{ in }\mathbb{R}^N\setminus\Om,\leqno{(\mathcal{F})}
$$
has been studied by Fang \cite{Fang} in the semilinear case $p=2$; Canino-Montoro-Sciunzi-Squassina \cite{Caninononloc}, Garain-Mukherjee \cite{GMnonloc} in the quasilinear case. The perturbed  singular case is investigated by Barrios-De Bonis-Medina-Peral \cite{BDMP}, Adimurthi-Giacomoni-Santra \cite{Adi} for $p=2$; Mukherjee-Sreenadh \cite{MS} in the quasilinear case and the references therein. For measure data problems, see De Cave-Durastanti-Oliva \cite{DeCave2}, Oliva-Orsina-Petitta \cite{OP2, OP3, OP1} and the references therein. We also refer to the nice survey by Ghergu-R\u{a}dulescu \cite{GRbook} on singular elliptic problems.

We emphasis that problems with mixed local and nonlocal characters have been far less studied in the literature to date even for the nonsingular case. The following type of problem,
\begin{equation}\label{mnsin}
-\Delta_p u+(-\Delta_p)^s u=g\text{ in }\Om,
\end{equation}
where $g$ is nonsingular have been studied recently. In this regard, Del Pezzo-Ferreira-Rossi \cite{Rossi1} studied the following eigenvalue problem
\begin{equation}\label{mevp}
-\Delta_p u-\int\limits_{\mathbb{R}^N}\mathcal{J}(x,y)|u(x)-u(y)|^{p-2}(u(x)-u(y))\,dy=\lambda|u|^{p-2}u\text{ in }\Om,\,u=0\text{ in }\mathbb{R}^N\setminus\Om,
\end{equation}
where $\mathcal{J}:\mathbb{R}^N\to\mathbb{R}^+$ is a nonnegative, nonsingular, radially symmetric and compactly supported kernel. For related results, see also da Silva-Salort \cite{Silvas}, Biagi-Dipierro-Valdinoci-Vecchi \cite{Biagi2} and the references therein. 

The question of regularity related to the problem \eqref{mnsin} has been settled by Barlow-Bass-Chen-Kassmann \cite{BBCK}, Chen-Kim-Song-Vondra\v{c}ek \cite{Song2, Song1}, Garain-Kinnunen \cite{GK, GK1} and the references therein. Furthermore, existence, regularity, symmetry and maximum principles along various qualitative properties of solutions have been recently investigated by Biagi-Dipierro-Valdinoci-Vecchi \cite{BSVV2, BSVV1}, Dipierro-Proietti Lippi-Valdinoci \cite{DPV20, DPV21}, Biagi-Mugnai-Vecchi \cite{Biagi1}, Dipierro-Ros-Oton-Serra-Valdinoci \cite{DRXJV20}. 

\subsection*{Our contribution:} We start with the existence results (Theorem \ref{thm1}-\ref{thm3}) for the problem $\mathcal{(S)}$, where the idea stem from the approximation approach in Boccardo-Orsina \cite{BocOr}. To this end, employing the idea from Boccardo-Murat \cite{BocMu}, a gradient convergence theorem (Theorem \ref{grad}) is established in the mixed case that is very crucial to pass the limit in obtaining our existence results. It turns out that for $0<\delta\leq 1$, a solution $u$ lies in $W_0^{1,p}(\Om)$. Although it is worth mentioning that the situation is different for $\delta>1$, since here we only find solutions $u$ in $W^{1,p}_{\mathrm{loc}}(\Om)$ such that $u^\frac{\delta+p-1}{p}\in W_0^{1,p}(\Om)$. Such phenomena is referred to as the boundary condition of our solutions. Indeed, a more general boundary condition is considered in Definition \ref{bc}. Under this definition, a uniqueness result (Theorem \ref{thm4}) is proved. We emphasis that to deal with our general boundary condition, the solutions may not belong to $W_0^{1,p}(\Om)$ and therefore, we cannot apply the density arguments as in Lemma \ref{testfn} that is an important ingredient to prove the uniqueness result in Corollary \ref{tstcor}. We overcome this difficulty by establishing a comparison principle (see Lemma \ref{cp}) for subsolutions and supersolutions of the problem $\mathcal{(S)}$ following the idea from Canino-Sciunzi \cite{Caninouni}. As a consequence, we deduce a symmetry result, that in particular gives radial solution, provided the given data is radial (Theorem \ref{sym}). Further, we obtain a regularity result in Theorem \ref{regthm}.

Next, we establish a mixed Sobolev inequality with an extremal (Theorem \ref{thm5}). It is shown that such extremals are simple in the sense that they are equal upto a multiplicative constant. Moreover, extremals are associated with the obtained solutions of $\mathcal{(S)}$. We also observe that the mixed Sobolev inequality is a necessary and sufficient condition (Theorem \ref{nsthm}) for the existence of solutions for the problem $\mathcal{(S)}$ and an analogous result holds in the separate local and nonlocal cases (Theorem \ref{nsthm1}-\ref{nsthm2}). As a byproduct, we prove that the singular mixed local and nonlocal $p$-Laplace equation $\mathcal{(S)}$ admits a weak solution if and only if the singular $p$-Laplace equation $\mathcal{(P)}$ does so (Corollary \ref{nsthmcor}). As far as we are aware, our main results are new even in the semilinear case $p=2$.

\subsection*{Organization:} The rest of the article is organized as follows: In Section $2$, we discuss some preliminaries in our setting and state our main results. In Sections $3$, $4$ and $5$, we establish several auxiliary lemmas that are crucial to prove the existence along with regularity, uniqueness and mixed Sobolev inequality respectively. Finally, in Section $6$, we prove our main results and in Section $7$, a gradient convergence theorem is proved for the mixed case respectively.

\subsection*{Notations:} Throughout the paper, we shall use the following notations:
\begin{itemize}
\item For $k\in\mathbb{R}$, we denote by $k^+=\max\{k,0\}$, $k^-=\max\{-k,0\}$ and $k_-=\min\{k,0\}$.
\item $\langle,\rangle$ denotes the standard inner product in $\mathbb{R}^N$.
\item For $r>1$, we denote by $r'=\frac{r}{r-1}$ to mean the conjugate exponent of $r$.
\item For a function $F$ defined over a set $S$ and some constants $c$ and $d$, by $c\leq F\leq d$ in $S$, we mean $c\leq F\leq d$ almost everywhere in $S$. 
\item $C$ will denote a constant which may vary from line to line or even in the same line. If $C$ depends on the parameters $r_1,r_2,\ldots,$ then we write $C=C(r_1,r_2,\ldots,)$.
\end{itemize}

\section{Preliminaries and main results}
In this section, we present some known results for the fractional Sobolev spaces, see \cite{Hitchhiker'sguide} for more details.
Let 
$E\subset \mathbb{R}^N$
be a measurable set. Recall that the Lebesgue space 
$L^{p}(E),1\leq p<\infty$, 
is defined as the space of $p$-integrable functions $u:E\to\mathbb{R}$ with the finite norm
$$ \|u\|_{L^p(E)}=
\left(\int\limits_{E}|u(x)|^p~dx\right)^{1/p}.
$$
By 
$L^p_{\mathrm{loc}}(E)$ 
we denote the space of locally $p$-integrable functions, means that,
$u\in L^p_{\mathrm{loc}}(E)$ 
if and only if  
$u\in L^p(F)$ 
for every compact subset 
$F\subset E$. In the case $0<p<1$, we denote by $L^{p}(E)$ a set of measurable functions such that 
$\int_{E}|u(x)|^p~dx<\infty$.

If $\Omega$ is an open subset of $\mathbb R^N$, the Sobolev space $W^{1,p}(\Omega)$, $1\leq p<\infty$, is defined 
as a Banach space of locally integrable weakly differentiable functions
$u:\Omega\to\mathbb{R}$ equipped with the following norm: 
\[
\|u\|_{W^{1,p}(\Omega)}=\| u\|_{L^p(\Omega)}+\|\nabla u\|_{L^p(\Omega)}.
\]
The space $W^{1,p}_0(\Omega)$ is defined as a closure of a space $C^{\infty}_c(\Omega)$ of smooth functions with compact support in the norm of the Sobolev space $W^{1,p}(\Omega)$. 

\begin{Definition}
Let $\Om\subset\mathbb{R}^N$ with $N\geq 2$ be any open set. The fractional Sobolev space $W^{s,p}(\Omega)$, $0<s<1<p<\infty$, is defined by
$$
W^{s,p}(\Omega)=\Big\{u\in L^p(\Omega):\frac{|u(x)-u(y)|}{|x-y|^{\frac{N}{p}+s}}\in L^p(\Omega\times \Omega)\Big\}
$$
and endowed with the norm
$$
\|u\|_{W^{s,p}(\Omega)}=\left(\int_{\Omega}|u(x)|^p\,dx+\int_{\Omega}\int_{\Omega}\frac{|u(x)-u(y)|^p}{|x-y|^{N+ps}}\,dx\,dy\right)^\frac{1}{p}.
$$
{The fractional Sobolev space with zero boundary values is defined by}
$$
W_{0}^{s,p}(\Omega)={\big\{u\in W^{s,p}(\mathbb{R}^N):u=0\text{ on }\mathbb{R}^N\setminus\Omega\big\}}.
$$
\end{Definition}
Both $W^{s,p}(\Omega)$ and $W_{0}^{s,p}(\Omega)$ are reflexive Banach spaces, see \cite{Hitchhiker'sguide}. The space $W^{s,p}_{\mathrm{loc}}(\Omega)$ is defined analogously.

For the rest of the article, we shall assume that $\Om\subset\mathbb{R}^N$ with $N\geq 2$ is a Lipschitz bounded domain, $0<s<1<p<\infty$, $\delta>0$ and denote by $W^{1,p}(\Om)$ to mean the classical Sobolev space with first weak derivatives, unless otherwise mentioned. The next result asserts that the Sobolev space $W^{1,p}(\Om)$ is continuously embedded in the fractional Sobolev space, see \cite[Proposition $2.2$]{Hitchhiker'sguide}
\begin{Lemma}\label{defineq}
There exists a constant $C=C(N,p,s)>0$ such that
$$
\|u\|_{W^{s,p}(\Om)}\leq C\|u\|_{W^{1,p}(\Om)},\quad\forall\,u\in W^{1,p}(\Omega).
$$
\end{Lemma}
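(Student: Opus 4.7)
The plan is to bound the fractional Gagliardo seminorm
$$[u]_{W^{s,p}(\Omega)}^p := \int_\Omega\int_\Omega \frac{|u(x)-u(y)|^p}{|x-y|^{N+ps}}\,dx\,dy$$
by $\|u\|_{W^{1,p}(\Omega)}^p$, since the $\|u\|_{L^p(\Omega)}$ contribution to the $W^{s,p}(\Omega)$-norm is trivially controlled. I would split the region of integration $\Omega\times\Omega$ into the near-diagonal part $A=\{(x,y)\in\Omega\times\Omega:|x-y|<1\}$ and its complement $B=\{(x,y)\in\Omega\times\Omega:|x-y|\geq 1\}$, and estimate the two pieces separately.

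On $B$, the kernel satisfies $|x-y|^{-(N+ps)}\leq 1$. Using the elementary bound $|u(x)-u(y)|^p\leq 2^{p-1}(|u(x)|^p+|u(y)|^p)$ and Fubini's theorem, the $B$-contribution is at most $C(N,p)\,|\Omega|\,\|u\|_{L^p(\Omega)}^p$, which is absorbed into $\|u\|_{W^{1,p}(\Omega)}^p$.

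On $A$, I would first invoke the extension theorem for Lipschitz domains to produce $\tilde u\in W^{1,p}(\mathbb{R}^N)$ with $\tilde u|_\Omega=u$ and $\|\tilde u\|_{W^{1,p}(\mathbb{R}^N)}\leq C_\Omega\|u\|_{W^{1,p}(\Omega)}$; this is needed because the segment $[x,y]$ joining two points of $\Omega$ may leave $\Omega$. For a.e.\ $x,y\in\mathbb{R}^N$ one has the representation
$$\tilde u(x)-\tilde u(y)=\int_0^1\nabla\tilde u\bigl(x+t(y-x)\bigr)\cdot(y-x)\,dt,$$
so Jensen's inequality yields $|\tilde u(x)-\tilde u(y)|^p\leq |x-y|^p\int_0^1|\nabla\tilde u(x+t(y-x))|^p\,dt$. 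Plugging this into the $A$-integral and performing the change of variables $z=y-x$, Fubini transforms the bound into
$$\int_{|z|<1}\frac{dz}{|z|^{N-p(1-s)}}\int_0^1\int_\Omega|\nabla\tilde u(x+tz)|^p\,dx\,dt.$$
The inner double integral is uniformly bounded by $\|\nabla\tilde u\|_{L^p(\mathbb{R}^N)}^p$, and the outer integral converges since $N-p(1-s)<N$, producing a constant depending only on $N,p,s$.

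Summing the two contributions gives $[u]_{W^{s,p}(\Omega)}^p\leq C(N,p,s,\Omega)\|u\|_{W^{1,p}(\Omega)}^p$, which combined with $\|u\|_{L^p(\Omega)}\leq\|u\|_{W^{1,p}(\Omega)}$ completes the proof. The main obstacle is the geometric one: the naive FTC-along-segments bound fails for nonconvex $\Omega$, and the fix is precisely the Lipschitz extension. The auxiliary difficulty is bookkeeping the singular factor $|z|^{-(N-p(1-s))}$ and noting that its integrability near the origin is guaranteed by $s<1$.
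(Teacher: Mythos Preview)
Your proof is correct. The paper does not give its own proof of this lemma; it merely cites \cite[Proposition~2.2]{Hitchhiker'sguide}, and your argument is precisely the standard one found there: split the Gagliardo seminorm into near-diagonal and off-diagonal parts, control the far piece by $|\Omega|\|u\|_{L^p}^p$, and handle the near-diagonal piece via a $W^{1,p}$-extension followed by the fundamental theorem of calculus along segments, Jensen, and integrability of $|z|^{-(N-p(1-s))}$ over the unit ball.

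One remark worth making: you correctly record the final constant as $C(N,p,s,\Omega)$, since both the factor $|\Omega|$ from the off-diagonal estimate and the extension constant $C_\Omega$ enter. The paper (following the cited reference verbatim) writes $C=C(N,p,s)$, which is a slight imprecision for a general Lipschitz domain; the $\Omega$-dependence through the extension operator is unavoidable with this method. This does not affect anything downstream, as $\Omega$ is fixed throughout the paper.
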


Next, we have the following result from \cite[Lemma $2.1$]{Silva}.

\begin{Lemma}\label{locnon1}
There exists a constant $C=C(N,p,s,\Omega)$ such that
\begin{equation}\label{locnonsem}
\int\limits_{\mathbb{R}^N}\int\limits_{\mathbb{R}^N}\frac{|u(x)-u(y)|^p}{|x-y|^{N+ps}}\,dx\,dy\leq C\int_{\Omega}|\nabla u|^p\,dx,\quad\forall\,u\in W_0^{1,p}(\Omega).
\end{equation}
\end{Lemma}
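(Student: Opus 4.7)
The plan is to extend $u$ by zero outside $\Om$ to get $\tilde u\in W^{1,p}(\mathbb{R}^N)$ with $\|\nabla\tilde u\|_{L^p(\mathbb{R}^N)} = \|\nabla u\|_{L^p(\Om)}$, and then split the Gagliardo double integral on $\mathbb{R}^N\times\mathbb{R}^N$ into a local piece on a bounded neighborhood (handled via Lemma \ref{defineq}) and a long-range tail piece that exploits the compact support of $\tilde u$.

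Concretely, I would fix an open ball $B\supset\supset\Om$ and set $d:=\mathrm{dist}(\Om,\mathbb{R}^N\setminus B)>0$. Decomposing
\[
\iint_{\mathbb{R}^N\times\mathbb{R}^N}\frac{|\tilde u(x)-\tilde u(y)|^p}{|x-y|^{N+ps}}\,dx\,dy = \iint_{B\times B} + 2\iint_{B\times(\mathbb{R}^N\setminus B)} + \iint_{(\mathbb{R}^N\setminus B)\times(\mathbb{R}^N\setminus B)},
\]
the last piece vanishes since $\tilde u\equiv 0$ outside $\Om\subset B$. For the first piece, applying Lemma \ref{defineq} to $\tilde u|_B\in W^{1,p}(B)$ gives
\[
\iint_{B\times B}\frac{|\tilde u(x)-\tilde u(y)|^p}{|x-y|^{N+ps}}\,dx\,dy \le C(N,p,s)\|\tilde u\|_{W^{1,p}(B)}^p = C\bigl(\|u\|_{L^p(\Om)}^p + \|\nabla u\|_{L^p(\Om)}^p\bigr).
\]

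For the mixed piece, using $\tilde u(y)=0$ for $y\notin B$ and $\tilde u(x)=0$ for $x\notin\Om$, together with $|x-y|\ge d$ on the relevant domain, I would estimate
\[
\iint_{B\times(\mathbb{R}^N\setminus B)}\frac{|\tilde u(x)-\tilde u(y)|^p}{|x-y|^{N+ps}}\,dx\,dy \le \int_\Om |u(x)|^p\left(\int_{|z|\ge d}\frac{dz}{|z|^{N+ps}}\right)dx = \frac{C(N,p,s)}{d^{ps}}\|u\|_{L^p(\Om)}^p,
\]
the inner integral being computed in polar coordinates. Finally, Poincar\'e's inequality for $u\in W_0^{1,p}(\Om)$ absorbs $\|u\|_{L^p(\Om)}^p$ into $C(\Om,p)\|\nabla u\|_{L^p(\Om)}^p$, and collecting the three estimates delivers a constant $C=C(N,p,s,\Om)$ of the required form.

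The argument is essentially routine. The only mild subtlety is justifying that the zero extension of $u\in W_0^{1,p}(\Om)$ lies in $W^{1,p}(\mathbb{R}^N)$ with the gradient extended by zero, which follows from the density of $C_c^\infty(\Om)$ in $W_0^{1,p}(\Om)$. No deeper obstacle is anticipated; the key conceptual input is the near-far decomposition, which isolates the only place where Lemma \ref{defineq} is needed.
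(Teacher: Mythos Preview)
Your argument is correct and self-contained; the near--far decomposition together with Lemma~\ref{defineq} on the enlarged ball, the explicit tail estimate, and Poincar\'e's inequality yield the stated bound with a constant of the right dependence. Note, however, that the paper does not supply its own proof of this lemma: it simply quotes the result from \cite[Lemma~2.1]{Silva}. So there is no in-paper argument to compare against; what you have written is in fact more than the paper provides, and amounts to a standard direct proof of the cited inequality.
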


\begin{Remark}\label{norm12}
Using Lemma \ref{embedding} and (\ref{locnonsem}) the following norm on the space $W_0^{1,p}(\Om)$ defined by
\begin{equation}\label{norm}
\|u\|:=\|u\|_{W_0^{1,p}(\Om)}=\left(\int_{\Omega}|\nabla u|^p\,dx+\int\limits_{\mathbb{R}^N}\int\limits_{\mathbb{R}^N}\frac{|u(x)-u(y)|^p}{|x-y|^{N+ps}}\,dx\,dy\right)^\frac{1}{p}
\end{equation}
is equivalent to the norm 
\begin{equation}\label{equinorm}
\|u\|:=\|u\|_{W_0^{1,p}(\Om)}=\left(\int_{\Omega}|\nabla u|^p\,dx\right)^\frac{1}{p}.
\end{equation}
\end{Remark}

In section $3$, we will use the gradient norm \eqref{equinorm} and in section $5$ we will use the mixed norm \eqref{norm}. In other sections one can consider any one of them unless otherwise mentioned.

For the following Sobolev embedding, see, for example, \cite{Evans, Mazya}.

\begin{Lemma}\label{embedding}
The embedding operators
\[
W_0^{1,p}(\Omega)\hookrightarrow
\begin{cases}
L^t(\Om),&\text{ for }t\in[1,p^{*}],\text{ if }p\in(1,N),\\
L^t(\Om),&\text{ for }t\in[1,\infty),\text{ if }p=N,\\
L^{\infty}(\Om),&\text{ if }p>N,
\end{cases}
\]
are continuous. Also, the above embeddings are compact, except for $t=p^*=\frac{Np}{N-p}$, if $p\in(1,N)$.
\end{Lemma}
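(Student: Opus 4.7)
The plan is to prove the continuous embeddings first and then derive compactness from the Fréchet--Kolmogorov criterion, treating the three ranges $1<p<N$, $p=N$, and $p>N$ separately. Since $W_0^{1,p}(\Om)$ is by definition the closure of $C_c^\infty(\Om)$, by a density argument it suffices to establish the inequalities for $u\in C_c^\infty(\Om)$, extending $u$ by zero to all of $\R^N$.

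\textbf{Step 1 (Gagliardo--Nirenberg--Sobolev inequality for $1<p<N$).} First I would handle the case $p=1$: for $u\in C_c^\infty(\R^N)$, write $u(x)$ as the integral of $\partial_i u$ along the $i$-th coordinate line, take the product over $i=1,\dots,N$, and apply the generalized Hölder inequality iteratively to obtain $\|u\|_{L^{N/(N-1)}(\R^N)}\le C\|\nabla u\|_{L^1(\R^N)}$. For $1<p<N$, apply the $p=1$ inequality to $v=|u|^{\gamma}$ with $\gamma=\frac{p(N-1)}{N-p}$, and use Hölder's inequality to absorb the resulting $\|u\|_{L^{p^*}}$ term on the left; this yields $\|u\|_{L^{p^*}(\Om)}\le C\|\nabla u\|_{L^p(\Om)}$. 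Combined with the boundedness of $\Om$ and Hölder's inequality, this gives continuous embedding into $L^t(\Om)$ for every $t\in[1,p^*]$.

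\textbf{Step 2 (Cases $p=N$ and $p>N$).} For $p=N$, fix $t<\infty$, pick $q\in(1,N)$ sufficiently close to $N$ so that $q^*\ge t$, and apply Step~1 together with Hölder's inequality using the boundedness of $\Om$ to conclude $\|u\|_{L^t(\Om)}\le C\|\nabla u\|_{L^N(\Om)}$. For $p>N$, I would invoke Morrey's inequality: using the pointwise representation of $u(x)-u(y)$ via an integral of $\nabla u$ over a ball and Hölder with exponent $p$, one derives $|u(x)-u(y)|\le C|x-y|^{1-N/p}\|\nabla u\|_{L^p}$, giving Hölder continuity and in particular $\|u\|_{L^\infty(\Om)}\le C\|\nabla u\|_{L^p(\Om)}$.

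\textbf{Step 3 (Compactness).} For compactness, let $\{u_k\}$ be bounded in $W_0^{1,p}(\Om)$. I would verify the Fréchet--Kolmogorov hypotheses on $\R^N$ (using the zero extension): equi-integrability at infinity is immediate since supports lie in $\Om$, and uniform continuity of translations follows from $\|u_k(\cdot+h)-u_k\|_{L^p}\le |h|\|\nabla u_k\|_{L^p}$, which is standard for smooth approximations. This yields precompactness in $L^p(\Om)$, hence in $L^1(\Om)$. To upgrade to any $t<p^*$, interpolate via Hölder between $L^1$ and $L^{p^*}$ using the uniform bound from Step~1; for $p=N$ interpolate between $L^1$ and $L^{t_0}$ with $t_0>t$; for $p>N$ use Arzelà--Ascoli with the Hölder estimate from Step~2. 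The exclusion at $t=p^*$ can be seen by constructing a concentrating family of rescaled bump functions whose $L^{p^*}$ norm stays bounded away from zero while converging weakly to $0$.

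The main obstacle is the sharp Gagliardo--Nirenberg argument for $p=1$, which requires a careful iterated Hölder inequality, and the explicit identification of the critical exponent $p^*=Np/(N-p)$ via the scaling heuristic. The endpoint failure of compactness at $t=p^*$ is subtle but can be exhibited by an explicit concentration counterexample, matching the scaling invariance of the critical embedding. Since this is a classical result, I would ultimately appeal to the detailed presentations in \cite{Evans} and \cite{Mazya} for the remaining technical details.
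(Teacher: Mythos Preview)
Your sketch is correct and follows the standard textbook route (Gagliardo--Nirenberg for the subcritical continuous embedding, Morrey for $p>N$, Fr\'echet--Kolmogorov plus interpolation for compactness). Note, however, that the paper does not actually prove this lemma: it simply records it as a classical fact with the reference ``see, for example, \cite{Evans, Mazya}'' --- precisely the sources you invoke at the end. So there is no discrepancy in approach; you have merely supplied the details that the paper omits by citation.
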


Next, we state the algebraic inequality from \cite[Lemma $2.1$]{Dama}.

\begin{Lemma}\label{AI}
Let $1<p<\infty$. Then for any $a,b\in\mathbb{R}^N$, there exists a constant $C=C(p)>0$ such that
\begin{equation}\label{algineq}
\langle |a|^{p-2}a-|b|^{p-2}b, a-b \rangle\geq
C\frac{|a-b|^2}{(|a|+|b|)^{2-p}}.
\end{equation}
\end{Lemma}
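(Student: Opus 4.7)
The plan is the classical integral/Jacobian argument applied to the vector field $V(x) := |x|^{p-2}x$. Both sides vanish when $a = b$, so I assume $a \neq b$ and parametrize $\gamma(t) = b + t(a-b)$ for $t \in [0,1]$. The identity
\[
\langle V(a) - V(b),\,a-b\rangle \;=\; \int_0^1 \langle DV(\gamma(t))(a-b),\,a-b\rangle\,dt
\]
holds with $DV(x) = |x|^{p-2}I + (p-2)|x|^{p-4}\,x\otimes x$ for $x \neq 0$. Since $|\gamma(t)|^2$ is a nontrivial polynomial in $t$, the integrand is defined a.e.\ on $[0,1]$ whenever $p > 1$, so the representation makes sense.

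The next step is the pointwise quadratic-form bound
\[
\langle DV(x)v,v\rangle \;=\; |x|^{p-2}|v|^2 + (p-2)|x|^{p-4}\langle x,v\rangle^2 \;\geq\; \min\{1,\,p-1\}\,|x|^{p-2}|v|^2 \qquad \forall\,v \in \mathbb{R}^N,
\]
obtained from Cauchy--Schwarz $\langle x,v\rangle^2 \leq |x|^2|v|^2$ with attention to the sign of $p-2$: when $p \geq 2$ the correction term is nonnegative, and when $1<p<2$ it is bounded below by $-(2-p)|x|^{p-2}|v|^2$, leaving the factor $p-1$. Substituting $v = a-b$ reduces the lemma to the one-dimensional estimate
\[
\int_0^1 |\gamma(t)|^{p-2}\,dt \;\geq\; c(p)\,(|a|+|b|)^{p-2}.
\]

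For $1 < p \leq 2$ the exponent $p-2$ is non-positive and the uniform bound $|\gamma(t)| \leq (1-t)|b| + t|a| \leq |a|+|b|$ gives $|\gamma(t)|^{p-2} \geq (|a|+|b|)^{p-2}$ for every $t$, settling this range. For $p > 2$ the segment may pass close to the origin, and I would split on whether $|a-b| \leq \tfrac14(|a|+|b|)$ or not. In the first regime, after relabelling so that $|a|\geq |b|$, one has $|\gamma(t)|\geq |a|-\tfrac12|a-b|\geq \tfrac14(|a|+|b|)$ on $[\tfrac12,1]$, giving the integral bound directly. In the second regime, $(|a|+|b|)^{p-2}|a-b|^2 \leq 4^{p-2}|a-b|^p$, so it suffices to prove the classical monotonicity estimate $\langle V(a)-V(b),a-b\rangle \geq c(p)|a-b|^p$ for $p\geq 2$; this in turn follows from the same integral representation via $|\gamma(t)| \geq |a-b|\,|t-t^*|$, where $t^*$ minimises $|\gamma|^2$, combined with $\int_0^1 |t-t^*|^{p-2}\,dt \geq c(p)$.

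The main obstacle is the near-antipodal configuration when $p > 2$, where $|\gamma(t)|$ can essentially vanish midway along the segment and the naive pointwise lower bound on $|\gamma(t)|^{p-2}$ collapses; the case split above isolates exactly this configuration into the regime where the alternative $|a-b|^p$ bound is strong enough to recover the required inequality.
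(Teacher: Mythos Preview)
Your argument is correct. Note, however, that the paper does not actually supply a proof of this lemma: it is quoted verbatim from \cite[Lemma~2.1]{Dama} (Damascelli), so there is no in-paper argument to compare against. The route you take---the integral representation $\langle V(a)-V(b),a-b\rangle=\int_0^1\langle DV(\gamma(t))(a-b),a-b\rangle\,dt$ for $V(x)=|x|^{p-2}x$, the eigenvalue bound $DV(x)\geq\min\{1,p-1\}|x|^{p-2}I$, and then the reduction to a lower bound on $\int_0^1|\gamma(t)|^{p-2}\,dt$---is exactly the classical one underlying the cited reference and related sources. Your case split for $p>2$ (small versus large $|a-b|/(|a|+|b|)$) correctly isolates the only delicate configuration, namely when the segment nearly passes through the origin, and the fallback to the $|a-b|^p$ lower bound via $|\gamma(t)|\geq|a-b|\,|t-t^*|$ is the standard remedy there.
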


The following result follows from \cite[Lemma A.2]{BrPr}.

\begin{Lemma}\label{BrPrapp}
Let $1<p<\infty$ and $g:\mathbb{R}\to\mathbb{R}$ be an increasing function. Then for every $a,b\in\mathbb{R}$, we have
\begin{equation}\label{BrPrine}
|a-b|^{p-2}(a-b)\big(g(a)-g(b)\big)\geq|G(a)-G(b)|^p,
\end{equation}
where
$$
G(t)=\int_{0}^{t}g'(\tau)^\frac{1}{p}\,d\tau,\quad t\in\mathbb{R}.
$$
\end{Lemma}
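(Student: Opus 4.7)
The plan is to reduce to the case $a>b$ by symmetry. If $a=b$, both sides of \eqref{BrPrine} vanish. If $a<b$, swapping the roles of $a$ and $b$ leaves both sides invariant: the left factor $|a-b|^{p-2}(a-b)$ and the difference $g(a)-g(b)$ each change sign, so their product is unchanged, while the absolute value on the right makes it manifestly symmetric. So I assume $a>b$, in which case $|a-b|^{p-2}(a-b)=(a-b)^{p-1}$ and, because $g$ is nondecreasing, $g(a)-g(b)\geq 0$ and $G(a)-G(b)\geq 0$.

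Next I would use that a nondecreasing function is differentiable a.e.\ and satisfies the fundamental inequality
$$
g(a)-g(b)\;\geq\;\int_b^a g'(\tau)\,d\tau,
$$
whereas by construction
$$
G(a)-G(b)\;=\;\int_b^a g'(\tau)^{1/p}\,d\tau.
$$
The main analytic step is H\"older's inequality with conjugate exponents $p$ and $p'=p/(p-1)$, applied to the functions $g'(\tau)^{1/p}$ and the constant $1$ on the interval $(b,a)$:
$$
\int_b^a g'(\tau)^{1/p}\,d\tau\;\leq\;\left(\int_b^a g'(\tau)\,d\tau\right)^{\!1/p}(a-b)^{(p-1)/p}.
$$
Raising both sides to the $p$-th power and combining with the two displays above yields
$$
\bigl(G(a)-G(b)\bigr)^{p}\;\leq\;(a-b)^{p-1}\!\int_b^a g'(\tau)\,d\tau\;\leq\;(a-b)^{p-1}\bigl(g(a)-g(b)\bigr),
$$
which is precisely \eqref{BrPrine} in the case $a>b$.

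I do not expect a serious obstacle: the inequality is essentially a one-variable calculus fact resting on H\"older. The only point to keep in mind is the measure-theoretic subtlety that, for a general monotone $g$, one has only the $\geq$ direction of the fundamental theorem of calculus (equality would require absolute continuity), but this is exactly the direction needed, so the argument goes through. In applications of the lemma in the paper, $g$ will typically be Lipschitz, so the issue does not arise.
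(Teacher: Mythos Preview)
Your argument is correct. The paper does not give its own proof of this lemma; it merely cites \cite[Lemma A.2]{BrPr}. The proof in that reference is essentially the one you wrote: reduce by symmetry to $a>b$, apply H\"older (equivalently, Jensen for the convex function $t\mapsto t^{p}$) to $\int_b^a g'(\tau)^{1/p}\,d\tau$, and use the fundamental inequality $g(a)-g(b)\geq\int_b^a g'(\tau)\,d\tau$ for nondecreasing functions. Your remark about needing only the $\geq$ direction of the fundamental theorem of calculus is well placed and is exactly the point that makes the argument work for merely monotone (not necessarily absolutely continuous) $g$.
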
 

For the following result, see \cite[Lemma $3.5$]{Caninononloc}.

\begin{Lemma}\label{Cnlemma}
Let $q>1$ and $\epsilon>0$. For $(x,y)\in\mathbb{R}^2$, let us set
$$
S_{\epsilon}^x:=\{x\geq\epsilon\}\cap\{y\geq 0\}\text{ and }S_{\epsilon}^y=\{y\geq\epsilon\}\cap\{x\geq 0\}.
$$
Then for every $(x,y)\in S_{\epsilon}^x\cup S_{\epsilon}^y$, we have 
\begin{equation}\label{cneqn}
|x-y|\leq\epsilon^{1-q}|x^q-y^q|.
\end{equation}
\end{Lemma}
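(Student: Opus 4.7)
The plan is to reduce to a one-variable convexity (equivalently, concavity) argument. By the symmetry of the desired inequality $|x-y|\leq \epsilon^{1-q}|x^q-y^q|$ under swapping $x$ and $y$, I may assume without loss of generality that $x\geq y\geq 0$. In this case, whether $(x,y)\in S_\epsilon^x$ or $(x,y)\in S_\epsilon^y$, we always obtain $x\geq\epsilon$ (the second case forces $y\geq\epsilon$, and then $x\geq y\geq\epsilon$). So it suffices to prove, for fixed $x\geq\epsilon>0$ and $y\in[0,x]$, the inequality
$$\epsilon^{q-1}(x-y)\leq x^q-y^q.$$

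Fix such an $x$ and consider the auxiliary one-variable function
$$h(y):=x^q-y^q-\epsilon^{q-1}(x-y),\qquad y\in[0,x].$$
First I would check the two endpoint values: $h(x)=0$ directly, and $h(0)=x^q-\epsilon^{q-1}x=x\,(x^{q-1}-\epsilon^{q-1})\geq 0$ since $x\geq\epsilon$ and $q>1$.

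Next I would observe that $h$ is concave on $[0,x]$. This is clear because $y\mapsto y^q$ is convex on $[0,\infty)$ (since $q>1$), so $-y^q$ is concave, and adding the affine term $x^q-\epsilon^{q-1}(x-y)$ preserves concavity. A concave function on an interval that is nonnegative at both endpoints is nonnegative on the whole interval: for any $y\in[0,x]$, write $y=(1-t)\cdot 0+t\cdot x$ with $t=y/x\in[0,1]$, so that $h(y)\geq (1-t)h(0)+t\,h(x)\geq 0$. This yields $h(y)\geq 0$ on $[0,x]$, i.e. the desired inequality.

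I do not anticipate a real obstacle here; the only subtlety worth being careful about is the reduction step, where one must verify that the set $S_\epsilon^x\cup S_\epsilon^y$ combined with the assumption $x\geq y$ indeed forces $x\geq\epsilon$. Once that is done, the argument is a short concavity computation. An alternative route, which I would mention only if the concavity approach felt indirect, is the mean value theorem: $x^q-y^q=q\xi^{q-1}(x-y)$ for some $\xi\in[y,x]$, but this route requires a separate treatment of the case $y<\epsilon\leq x$, so the concavity argument seems more uniform.
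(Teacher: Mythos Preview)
The paper does not supply its own proof of this lemma; it simply refers the reader to \cite[Lemma~3.5]{Caninononloc}. Your argument is correct and self-contained: the symmetry reduction to $x\geq y\geq 0$ with $x\geq\epsilon$ is valid in both cases $S_\epsilon^x$ and $S_\epsilon^y$, the endpoint values $h(x)=0$ and $h(0)=x(x^{q-1}-\epsilon^{q-1})\geq 0$ are checked correctly, and the concavity of $h$ (coming from convexity of $y\mapsto y^q$ for $q>1$) yields $h\geq 0$ on $[0,x]$ as claimed.
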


Throughout the paper, we denote by
\begin{equation}\label{not}
\mathcal{A}\big(u(x,y)\big)=|u(x)-u(y)|^{p-2}\big(u(x)-u(y)\big)
\quad\text{and}\quad
d\mu=|x-y|^{-N-ps}\,dx\,dy.
\end{equation}
We define the notion of zero Dirichlet boundary condition as follows:
\begin{Definition}\label{bc}
We say that $u\leq 0$ on $\partial\Om$, if $u=0\text{ in }\mathbb{R}^N\setminus\Om$ and for every $\epsilon>0$, we have
$$
(u-\epsilon)^+\in W_0^{1,p}(\Om).
$$
We say that $u=0$ on $\partial\Om$, if $u$ is nonnegative and $u\leq 0$ on $\partial\Om$.
\end{Definition}

\begin{Remark}\label{bcrmk}
If $u\in W^{1,p}_{\mathrm{loc}}(\Om)$ is nonnegative in $\Om$, such that $u^\alpha\in W_0^{1,p}(\Om)$, for some $\alpha\geq 1$, then following the same proof of \cite[Theorem $2.10$]{G} we have $u=0$ on $\partial\Om$ according to the Definition \ref{bc}. 
\end{Remark}
Next, we define the notion of a weak solution for the problem $(\mathcal{S})$.

\begin{Definition}\label{defsolution}(Weak solution)
We say that a function $u\in W_{\mathrm{loc}}^{1,p}(\Omega)\cap L^{p-1}(\Om)$ is a weak solution of the problem $(\mathcal{S})$, if
$$
u>0\text{ in }\Om,\,u=0 \text{ on } \partial\Om \text{ in the sense of Definition \ref{bc}  and } \frac{f}{u^\delta}\in L^1_{\mathrm{loc}}(\Om),
$$
such that for every $\phi\in C_{c}^{1}(\Omega)$, we have
$$
\int_{\Omega}|\nabla u|^{p-2}\nabla u\nabla\phi\,dx+\int\limits_{\mathbb{R}^N}\int\limits_{\mathbb{R}^N}\mathcal{A}\big(u(x,y)\big)\big(\phi(x)-\phi(y)\big)\,d\mu=\int_{\Omega}\frac{f(x)}{u(x)^\delta}\phi(x)\,dx,\leqno{(\mathcal{W})}
$$
where $\mathcal{A}\big(u(x,y)\big)$ and $d\mu$ are defined by \eqref{not}.
\end{Definition}

\begin{Remark}\label{defrmk}
Note that Lemma \ref{defineq} and Lemma \ref{locnon1} ensures that Definition \ref{defsolution} is well stated. 
\end{Remark}

\subsection*{Statement of the main results:}
Now, we state our existence results as follows:
\begin{Theorem}\label{thm1}
Let $0<\delta<1$ and $1<p<\infty$. Assume that $f\in L^m(\Omega)\setminus\{0\}$ is nonnegative, where
\begin{equation}\label{m}
    m:=
\begin{cases}
    \big(\frac{p^{*}}{1-\delta}\big)',& \text{if } 1< p<N, \\
    t>1,& \text{if } p=N, \\
    1,& \text{if } p>N.
\end{cases}
\end{equation}
Then the problem $(\mathcal{S})$ admits a weak solution $u\in W_0^{1,p}(\Omega)$.
\end{Theorem}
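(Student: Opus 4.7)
The plan is to obtain $u$ as a limit of solutions of approximating non-singular problems, in the spirit of Boccardo--Orsina \cite{BocOr}, with the mixed structure handled via the gradient convergence result (Theorem~\ref{grad}) to be proved later in the paper. First I would introduce the truncated data $f_n = \min\{f,n\}$ and consider the regularized problem
\[
-\Delta_p u_n + (-\Delta_p)^s u_n = \frac{f_n(x)}{(u_n^+ + \tfrac{1}{n})^\delta} \quad \text{in } \Omega, \qquad u_n = 0 \text{ in } \mathbb{R}^N \setminus \Omega.
\]
Since the right-hand side is bounded in $L^\infty(\Omega)$, a solution $u_n \in W_0^{1,p}(\Omega)$ exists by standard variational arguments (minimizing the associated strictly convex, coercive functional on $W_0^{1,p}(\Omega)$, using the equivalence of norms from Remark~\ref{norm12}). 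Testing with $u_n^-$ and using the pointwise inequality for the nonlocal operator shows $u_n \geq 0$; a comparison argument (testing the difference of equations for $u_n$ and $u_{n+1}$ with $(u_n - u_{n+1})^+$) yields monotonicity $u_n \leq u_{n+1}$, and the strong maximum principle together with $f \not\equiv 0$ produces, for each $\omega \Subset \Omega$, a constant $c_\omega > 0$ with $u_n \geq c_\omega$ on $\omega$ for all $n$.

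Next I would establish the uniform $W_0^{1,p}(\Omega)$ bound. Testing the approximating equation with $u_n$ gives
\[
\int_\Omega |\nabla u_n|^p\,dx + \iint_{\mathbb R^N \times \mathbb R^N} \frac{|u_n(x)-u_n(y)|^p}{|x-y|^{N+ps}}\,dx\,dy \;\leq\; \int_\Omega f_n\, u_n^{1-\delta}\,dx.
\]
For $1 < p < N$, applying H\"older with exponents $m$ and $p^*/(1-\delta)$ and the Sobolev embedding (Lemma~\ref{embedding}) yields $\|u_n\|^p \leq C\|f\|_{L^m(\Omega)} \|u_n\|^{1-\delta}$, so $\|u_n\|_{W_0^{1,p}(\Omega)} \leq C$ since $p-(1-\delta) > 0$; the cases $p = N$ and $p > N$ are analogous using the corresponding embeddings. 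Up to a subsequence, $u_n \rightharpoonup u$ in $W_0^{1,p}(\Omega)$, $u_n \to u$ a.e.\ on $\mathbb{R}^N$, and $u_n \to u$ strongly in $L^q(\Omega)$ for the subcritical exponents; the lower bound $u \geq c_\omega > 0$ on each $\omega \Subset \Omega$ persists, so in particular $u > 0$ a.e.\ in $\Omega$.

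The main obstacle is passing to the limit in the nonlinear local and nonlocal principal parts. Here I would invoke Theorem~\ref{grad}, which provides $\nabla u_n \to \nabla u$ a.e.\ in $\Omega$; combined with the uniform $L^{p'}$ bound on $|\nabla u_n|^{p-2}\nabla u_n$, this gives weak convergence of the local term in $L^{p'}(\Omega)$. For the nonlocal term, the a.e.\ convergence of $u_n$ on $\mathbb{R}^N$ yields the a.e.\ convergence of $\mathcal{A}(u_n(x,y))|x-y|^{-(N+ps)/p'}$, together with a uniform $L^{p'}(\mathbb{R}^N \times \mathbb{R}^N)$ bound (from the uniform energy estimate), hence weak convergence in $L^{p'}$; pairing against the $L^p$-function $(\phi(x)-\phi(y))|x-y|^{-(N+ps)/p}$ passes the limit. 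For the singular right-hand side, fix $\phi \in C^1_c(\Omega)$ with $\mathrm{supp}\,\phi \Subset \omega \Subset \Omega$: on $\omega$ we have the domination $\big|f_n \phi/(u_n + 1/n)^\delta\big| \leq \|f\|_{L^1(\omega)} \|\phi\|_\infty c_\omega^{-\delta}$, and monotone/dominated convergence yields the limit $\int_\Omega f u^{-\delta} \phi\,dx$. Combining these convergences in the weak formulation of the approximating problem gives $(\mathcal{W})$; the local Lipschitz bound $f/u^\delta \in L^1_{\mathrm{loc}}(\Omega)$ follows from the Fatou lemma applied to the approximating identity tested with suitable cutoffs, and $u \in W_0^{1,p}(\Omega)$ by weak lower semicontinuity, concluding the proof.
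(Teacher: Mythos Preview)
Your proposal is correct and follows essentially the same approach as the paper: the approximation scheme $(\mathcal{A})$, the monotonicity and uniform positivity of $u_n$ (Lemma~\ref{approx}), the uniform $W_0^{1,p}$ bound via testing with $u_n$ (Lemma~\ref{unibddless}), the gradient convergence Theorem~\ref{grad} to pass the local term, weak $L^{p'}$-convergence for the nonlocal term, and dominated convergence for the singular right-hand side all match the paper's argument. One minor slip: your pointwise domination should read $\big|f_n\phi/(u_n+\tfrac1n)^\delta\big|\le c_\omega^{-\delta}\|\phi\|_\infty\,|f|$ (an integrable \emph{function}), not the constant $\|f\|_{L^1(\omega)}\|\phi\|_\infty c_\omega^{-\delta}$; also, the paper obtains $u_n$ via a Schauder fixed-point argument rather than direct minimization, but either route works here.
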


\begin{Theorem}\label{thm2}
Let $\delta=1$ and $f\in L^1(\Om)\setminus\{0\}$ be nonnegative. Then for any $1<p<\infty$, the problem $(\mathcal{S})$ admits a weak solution $u\in W_0^{1,p}(\Om)$.
\end{Theorem}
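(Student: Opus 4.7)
The plan is to adapt the Boccardo--Orsina approximation strategy to the mixed setting. For each $n\in\N$, set $f_n=\min\{f,n\}\in L^\infty(\Om)$ and consider the non-singular approximate problem
\begin{equation*}
-\Delta_p u_n+(-\Delta_p)^s u_n=\frac{f_n(x)}{u_n+1/n}\quad\text{in }\Om,\qquad u_n=0\quad\text{in }\R^N\setminus\Om.
\end{equation*}
Since the right-hand side is bounded above by $n\|f_n\|_\infty$ and is non-increasing in the unknown, the existence of a non-negative $u_n\in W_0^{1,p}(\Om)\cap L^\infty(\Om)$ solving this problem follows by standard monotonicity/Schauder-type fixed point arguments applied to the coercive, strictly monotone mixed operator on $W_0^{1,p}(\Om)$. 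A weak comparison argument (together with the non-negativity and monotonicity of $n\mapsto f_n$ and $n\mapsto(t+1/n)^{-1}$) gives $u_{n+1}\geq u_n\geq u_1$ a.e.\ in $\Om$, while $f\not\equiv 0$ forces $u_1>0$ in $\Om$; hence for every $K\Subset\Om$ there exists $c_K>0$ with $u_n\geq c_K$ uniformly in $n$.

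The key \emph{a priori} bound is obtained by choosing $u_n$ itself as a test function. Since $u_n\in W_0^{1,p}(\Om)$ and $0\leq u_n/(u_n+1/n)\leq 1$, one gets
\begin{equation*}
\int_\Om|\nabla u_n|^p\,dx+\iint_{\R^N\times\R^N}\frac{|u_n(x)-u_n(y)|^p}{|x-y|^{N+ps}}\,dx\,dy=\int_\Om\frac{f_n u_n}{u_n+1/n}\,dx\leq\|f\|_{L^1(\Om)},
\end{equation*}
so $\{u_n\}$ is uniformly bounded in $W_0^{1,p}(\Om)$ with respect to the equivalent norm of Remark \ref{norm12}. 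Passing to a (non-relabelled) subsequence, Lemma \ref{embedding} yields $u_n\rightharpoonup u$ in $W_0^{1,p}(\Om)$, $u_n\to u$ in $L^r(\Om)$ for admissible $r$, and $u_n\to u$ a.e.\ in $\Om$. The monotonicity $u_n\geq u_1>0$ on compacta is preserved in the limit, so $u>0$ in $\Om$.

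To pass to the limit in the weak formulation with a test function $\phi\in C_c^1(\Om)$, invoke the mixed gradient convergence theorem (Theorem \ref{grad}) to obtain $\nabla u_n\to\nabla u$ a.e.\ in $\Om$; combined with the uniform $L^p$ bound this gives $|\nabla u_n|^{p-2}\nabla u_n\rightharpoonup|\nabla u|^{p-2}\nabla u$ weakly in $L^{p'}(\Om)^N$ by Vitali's convergence theorem, so the local term converges to the desired limit. For the nonlocal term, the pointwise a.e.\ convergence of $u_n$ together with the uniform Gagliardo bound and the pointwise bound $|\mathcal{A}(u_n(x,y))(\phi(x)-\phi(y))|\,d\mu$ controlled by a fixed integrable majorant (using $\phi\in C_c^1$, Lipschitz regularity of $\phi$, and boundedness of $\operatorname{supp}\phi$) permits a Vitali/dominated passage to the limit as well. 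Finally, on $K=\operatorname{supp}\phi$ the strict lower bound $u_n\geq c_K$ yields
\begin{equation*}
\left|\frac{f_n(x)\phi(x)}{u_n+1/n}\right|\leq\frac{\|\phi\|_\infty f(x)}{c_K}\in L^1(K),
\end{equation*}
and a.e.\ convergence of the integrand gives $\int_\Om \frac{f_n\phi}{u_n+1/n}\,dx\to\int_\Om\frac{f\phi}{u}\,dx$ by dominated convergence, so $u$ solves $(\mathcal{S})$ with $\delta=1$.

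The main obstacle is the passage to the limit in the two nonlinear operators simultaneously, where the local and nonlocal pieces interact and neither is monotone in the unknown alone; this is precisely the difficulty that the gradient convergence theorem \ref{grad} is designed to overcome, and once a.e.\ convergence of gradients is available the remaining steps are standard.
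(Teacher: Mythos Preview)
Your proof is correct and follows essentially the same approach as the paper: the approximation scheme, the a priori bound obtained by testing with $u_n$ and using $u_n/(u_n+1/n)\leq 1$ (this is Lemma~\ref{unibdequal}), the gradient convergence theorem (Theorem~\ref{grad}) for the local term, and dominated convergence for the right-hand side all match the paper's strategy. The only minor difference is in the nonlocal term, where the paper passes to the limit via weak convergence of $\mathcal{A}(u_n(\cdot,\cdot))/|x-y|^{(N+ps)/p'}$ in $L^{p'}(\R^N\times\R^N)$ against the fixed $L^p$ function $(\phi(x)-\phi(y))/|x-y|^{(N+ps)/p}$, rather than invoking a majorant; your claim of a ``fixed integrable majorant'' for this term is not quite right, but Vitali (via the uniform $L^{p'}$ bound) or the weak convergence argument both work, so the step goes through.
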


\begin{Theorem}\label{thm3}
Let $\delta>1$ and $f\in L^1(\Om)\setminus\{0\}$ be nonnegative. Then for any $1<p<\infty$, the problem $(\mathcal{S})$ admits a weak solution $u\in W^{1,p}_{\mathrm{loc}}(\Om)$ such that $u^\frac{\delta+p-1}{p}\in W_0^{1,p}(\Om)$.
\end{Theorem}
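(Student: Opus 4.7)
\textbf{Proof plan for Theorem \ref{thm3}.} The plan is to follow the approximation scheme of Boccardo--Orsina, adapted to the mixed local/nonlocal setting. For each $n\in\mathbb N$ set $f_n(x)=\min\{f(x),n\}$ and consider the regularized problem
\begin{equation*}
-\Delta_p u_n+(-\Delta_p)^s u_n=\frac{f_n(x)}{(u_n^++1/n)^\delta}\quad\text{in }\Omega,\qquad u_n=0\text{ in }\mathbb R^N\setminus\Omega.
\end{equation*}
Since the right-hand side is bounded in $L^\infty(\Omega)$, the existence of a nonnegative weak solution $u_n\in W_0^{1,p}(\Omega)$ follows from standard monotone operator theory (the operator $-\Delta_p+(-\Delta_p)^s$ is coercive and strictly monotone on $W_0^{1,p}(\Omega)$ thanks to Lemma \ref{defineq} and Lemma \ref{locnon1}). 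A weak comparison argument applied to $u_n-u_{n+1}$, using that $f_n/(t+1/n)^\delta$ is nondecreasing in $n$ and nonincreasing in $t$, yields $0\le u_n\le u_{n+1}$ a.e. A strong maximum principle together with the fact that $f\not\equiv 0$ gives $u_1>0$ on every compact subset $K\Subset\Omega$; in particular there exists $c_K>0$ with $u_n\ge u_1\ge c_K$ on $K$.

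The heart of the argument is the global a priori bound on $u_n^{(\delta+p-1)/p}$. Test the approximating equation with $u_n^\delta\in W_0^{1,p}(\Omega)\cap L^\infty(\Omega)$ (admissibility follows from a density/truncation argument). The local part gives
\begin{equation*}
\int_\Omega|\nabla u_n|^{p-2}\nabla u_n\cdot\nabla(u_n^\delta)\,dx=\delta\int_\Omega u_n^{\delta-1}|\nabla u_n|^p\,dx=\Bigl(\frac{p}{\delta+p-1}\Bigr)^{-p}\delta^{1-p}\int_\Omega\bigl|\nabla u_n^{(\delta+p-1)/p}\bigr|^p\,dx,
\end{equation*}
while Lemma \ref{BrPrapp} applied with $g(t)=t^\delta$, so that $G(t)=\frac{p\,\delta^{1/p}}{\delta+p-1}t^{(\delta+p-1)/p}$, bounds the nonlocal contribution from below by $\bigl(\frac{p\,\delta^{1/p}}{\delta+p-1}\bigr)^p\int\!\!\int |G(u_n(x))-G(u_n(y))|^p\,d\mu$. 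On the other hand the right-hand side is controlled by $\int_\Omega f_n\,dx\le\|f\|_{L^1(\Omega)}$ since $u_n^\delta/(u_n+1/n)^\delta\le 1$. Combining, we obtain a constant $C$ independent of $n$ such that
\begin{equation*}
\bigl\|u_n^{(\delta+p-1)/p}\bigr\|_{W_0^{1,p}(\Omega)}^p\le C\|f\|_{L^1(\Omega)}.
\end{equation*}

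Next I upgrade this to a local $W^{1,p}$ bound on $u_n$ itself. Fix $K\Subset\Omega$ and a cutoff $\eta\in C_c^\infty(\Omega)$ with $\eta\equiv 1$ on $K$. Testing the $n$-th equation against $\eta^p\,u_n$ and using $u_n\ge c_{\mathrm{supp}\,\eta}>0$ to estimate $u_n/(u_n+1/n)^\delta\le c_{\mathrm{supp}\,\eta}^{1-\delta}$, a Young-inequality absorption handles the local part; the nonlocal part is handled with the algebraic inequality from Lemma \ref{AI} together with the tail estimate which follows from the already established global bound on $u_n^{(\delta+p-1)/p}$ (hence on $u_n$ in $L^{p-1}(\mathbb R^N)$ since $\delta+p-1\ge p-1$). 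This yields $\|u_n\|_{W^{1,p}(K)}\le C(K)$.

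By monotone convergence $u_n\uparrow u$ a.e.; the uniform bound on $u_n^{(\delta+p-1)/p}$ in $W_0^{1,p}(\Omega)$ and the reflexivity/compactness from Lemma \ref{embedding} identify the weak limit as $u^{(\delta+p-1)/p}\in W_0^{1,p}(\Omega)$, in particular $u=0$ on $\partial\Omega$ in the sense of Definition \ref{bc} by Remark \ref{bcrmk}. The local bounds give $u\in W^{1,p}_{\mathrm{loc}}(\Omega)$ with $u_n\rightharpoonup u$ in $W^{1,p}(K)$ for each compact $K$. Invoking the mixed gradient convergence theorem (Theorem \ref{grad}) promotes this to $\nabla u_n\to\nabla u$ a.e. in $\Omega$. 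To pass to the limit in the weak formulation against $\phi\in C_c^1(\Omega)$, the local part is handled via Vitali's theorem using the local $W^{1,p}$ bound, the nonlocal part via dominated convergence (the integrand is dominated by $C(\phi)|x-y|^{-N-ps+1}\chi_{\{|x-y|\le\mathrm{diam}\,\Omega\}}\cdot\|u_n\|_{L^{p-1}}^{p-2}$-type quantities together with pointwise convergence), and the singular right-hand side via dominated convergence because on $\mathrm{supp}\,\phi$ one has $f_n(x)|\phi(x)|/(u_n+1/n)^\delta\le f(x)\|\phi\|_\infty/c_{\mathrm{supp}\,\phi}^\delta\in L^1(\Omega)$. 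The main technical obstacle is this last step, specifically the nonlocal passage to the limit when the global bound is only on the nonlinear power $u_n^{(\delta+p-1)/p}$ and not on $u_n$ itself; splitting the double integral into the diagonal region (handled by Vitali using local bounds) and the off-diagonal region (handled by dominated convergence via integrability of $|x-y|^{-N-ps}$ on $\{|x-y|\ge r\}$) resolves it.
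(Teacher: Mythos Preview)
Your proposal is correct and follows the paper's overall scheme: approximate by the regularized problems $(\mathcal A)$, test with $u_n^\delta$ to bound $u_n^{(\delta+p-1)/p}$ uniformly in $W_0^{1,p}(\Omega)$ (for this the paper only uses that the nonlocal term is nonnegative, via Lemma \ref{BrPrapp}; your stronger lower bound is fine but unnecessary), invoke Theorem \ref{grad} for a.e.\ gradient convergence, and pass to the limit using Remark \ref{bcrmk} for the boundary condition.

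The one place where your route diverges from the paper's, and becomes noticeably heavier, is the local $W^{1,p}$ bound on $u_n$. You test against $\eta^p u_n$ and then need a Young absorption plus a nonlocal tail estimate. The paper avoids this entirely with a one-line algebraic identity (Lemma \ref{unibddgrt}): on any $\omega\Subset\Omega$ one has
\[
\int_\omega |\nabla u_n|^p\,dx=\Bigl(\frac{p}{\delta+p-1}\Bigr)^p\int_\omega u_n^{1-\delta}\,\bigl|\nabla u_n^{(\delta+p-1)/p}\bigr|^p\,dx
\le \Bigl(\frac{p}{\delta+p-1}\Bigr)^p C(\omega)^{1-\delta}\,\bigl\|u_n^{(\delta+p-1)/p}\bigr\|^p,
\]
using $u_n\ge C(\omega)>0$ and $1-\delta<0$. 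This is cleaner and sidesteps any tail analysis. Similarly, for the nonlocal passage to the limit the paper simply refers to the argument of \cite[Theorem 3.6]{Caninononloc}, which relies on exactly the kind of splitting you describe but packages it through Lemma \ref{Cnlemma} (applied with $q=(\delta+p-1)/p$) to convert the global bound on $u_n^{(\delta+p-1)/p}$ into control of $|u_n(x)-u_n(y)|$ on the relevant region; you may want to cite that lemma explicitly rather than relying on an ad hoc domination.
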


Next, we have the following regularity result.
\begin{Theorem}\label{regthm}
Let $\delta>0$ and $f\in L^q(\Om)\setminus\{0\}$ be nonnegative such that $q>\frac{p^{*}}{p^{*}-p}$ if $1<p<N$, $q>\frac{l}{l-p}$ for some $l>p$ if $p=N$ and $q=1$ if $p>N$. Then, the weak solution given by Theorem \ref{thm1}-\ref{thm3} belong to $L^\infty(\Om)$.
\end{Theorem}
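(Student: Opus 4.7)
The strategy is Stampacchia's truncation method on the superlevel sets $A(k):=\{u>k\}$, testing the weak formulation $(\mathcal{W})$ with $\phi=(u-k)^+$ for $k\geq 1$. For $k>0$, Definition \ref{bc} of the zero Dirichlet boundary condition guarantees $(u-k)^+\in W_0^{1,p}(\Omega)$, and by a density argument this is an admissible test function even when $\delta>1$ (where $u$ itself lies only in $W^{1,p}_{\mathrm{loc}}(\Omega)$). The key structural input is Lemma \ref{BrPrapp}, applied to the nondecreasing $g(t)=(t-k)^+$ (for which $G(t)=(t-k)^+$ as well): it yields
\[
\int\!\!\int \mathcal{A}\bigl(u(x,y)\bigr)\bigl((u-k)^+(x)-(u-k)^+(y)\bigr)\,d\mu \;\geq\; 0,
\]
so the nonlocal term can be discarded. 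Since $u^\delta\geq k^\delta\geq 1$ on $A(k)$, this produces the master inequality
\[
\int_\Omega |\nabla(u-k)^+|^p\,dx \;\leq\; \int_{A(k)} f\,(u-k)^+\,dx.
\]

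For the main case $1<p<N$, I would apply H\"older with exponents $(q,q')$ and then with $(p^*/q',(p^*/q')')$ on $A(k)$, followed by the Sobolev embedding of Lemma \ref{embedding}, to obtain
\[
\|\nabla(u-k)^+\|_{L^p}^{p-1}\;\leq\; C\,\|f\|_{L^q}\,|A(k)|^{1/q'-1/p^*}.
\]
Using $(u-k)^+\geq h-k$ on $A(h)$ for $h>k$ and one more application of Sobolev then yields the iterative estimate
\[
|A(h)|\;\leq\;\frac{C}{(h-k)^{p^*}}\,|A(k)|^{\beta},\qquad \beta=\frac{p^*/q'-1}{p-1}.
\]
A direct algebraic check gives $\beta>1 \iff q>p^*/(p^*-p)=N/p$, which is precisely the standing hypothesis. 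The classical Stampacchia lemma then provides some $k^*<\infty$ with $|A(k^*)|=0$, i.e.\ $u\leq k^*$ almost everywhere.

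For $p=N$ the argument is identical with $p^*$ replaced by any $l>p$, using the embedding $W_0^{1,N}(\Omega)\hookrightarrow L^l(\Omega)$; the iteration runs whenever $q>l/(l-p)$. For $p>N$ the embedding $W_0^{1,p}\hookrightarrow L^\infty$ makes the conclusion essentially immediate: if $\delta\leq 1$ it applies to $u$ directly, and if $\delta>1$ it applies to $u^{(\delta+p-1)/p}\in W_0^{1,p}(\Omega)$ as given by Theorem \ref{thm3}. The main obstacle I expect is the technical justification of $(u-k)^+$ as admissible test function when $\delta>1$, since $u$ is not globally in a Sobolev space and the double integral in the nonlocal term must be shown to converge and to pass to the limit along a $C_c^1$-approximation in all three terms of $(\mathcal{W})$. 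This is exactly where Definition \ref{bc}, Lemma \ref{defineq} and the density result (Lemma \ref{testfn}) are essential.
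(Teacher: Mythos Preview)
Your Stampacchia iteration is exactly the right mechanism, and your computation of the exponent $\beta=(p^*/q'-1)/(p-1)$ with $\beta>1\iff q>p^*/(p^*-p)$ is correct. The treatment of the nonlocal term via Lemma \ref{BrPrapp} (the paper uses the direct case check \eqref{nonlosign} instead) and the cases $p=N$, $p>N$ are also fine.

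However, the paper does \emph{not} run the argument on $u$. Its proof of Theorem \ref{regthm} is one line: it invokes Lemma \ref{unibddreg}, which carries out the same Stampacchia iteration on the \emph{approximate} solutions $u_n\in W_0^{1,p}(\Omega)\cap L^\infty(\Omega)$, obtaining $\|u_n\|_{L^\infty}\leq C$ uniformly in $n$, and then passes to the pointwise limit. Since $u_n$ is globally in $W_0^{1,p}(\Omega)$, testing $(\mathcal{A})$ with $(u_n-k)^+$ is unproblematic for every $\delta>0$, and the whole issue you flag simply disappears.

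Your direct route has a genuine gap precisely at the point you identify. For $\delta>1$ you cannot appeal to Lemma \ref{testfn}: that lemma is stated and proved only for solutions $u\in W_0^{1,p}(\Omega)$, and its proof uses $\|u\|^{p-1}$ to dominate the singular term, which is unavailable when $u\in W^{1,p}_{\mathrm{loc}}(\Omega)$ only. Definition \ref{bc} does give $(u-k)^+\in W_0^{1,p}(\Omega)$, but to justify it as a test function in $(\mathcal{W})$ you would need a compactly supported approximation argument along the lines of the one in the proof of Lemma \ref{cp} (with the sequences $\psi_{n,\eta}=T_\eta(\min\{(u-k)^+,\phi_n^+\})$), controlling separately the local, nonlocal, and singular integrals in the limit. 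This is doable but is real work you have not written; the paper's choice to stay at the level of $u_n$ avoids it entirely.
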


Now, we state our uniqueness result.
\begin{Theorem}\label{thm4}
Let $\delta>0$ and $f\in L^t(\Om)\setminus\{0\}$ be nonnegative such that $t=(p^*)'$ if $1<p<N$, $t>1$ if $p=N$ and $t=1$ if $p>N$. Then the problem $(\mathcal{S})$ admits at most one weak solution in $W^{1,p}_{\mathrm{loc}}(\Omega)\cap L^{p-1}(\Omega)$.
\end{Theorem}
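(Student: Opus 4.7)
The plan is to derive uniqueness from the strict monotonicity of the singular right-hand side $t\mapsto f(x)/t^\delta$ combined with the monotonicity of the mixed operator $-\Delta_p+(-\Delta_p)^s$ supplied by Lemma \ref{AI}. Let $u_1,u_2\in W^{1,p}_{\mathrm{loc}}(\Om)\cap L^{p-1}(\Om)$ be two weak solutions of $(\mathcal{S})$ sharing the zero boundary condition of Definition \ref{bc}. The prototypical calculation is to subtract the weak formulations of the two equations and test the result against $(u_1-u_2)^+$. By Lemma \ref{AI} (used pointwise for the local term and on the product space for the nonlocal term), both monotone contributions are nonnegative, while the singular term produces
\[
\int_\Om f(x)\bigl(u_1(x)^{-\delta}-u_2(x)^{-\delta}\bigr)(u_1-u_2)^+\,dx \le 0,
\]
with strict negativity as soon as $\{u_1>u_2\}$ has positive measure, since $t\mapsto t^{-\delta}$ is strictly decreasing and both $u_i$ are strictly positive in $\Om$. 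Matching the two sides forces $(u_1-u_2)^+=0$, and the symmetric argument then yields $u_1=u_2$ almost everywhere.

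The main obstacle is legitimizing $(u_1-u_2)^+$ as an admissible test function in $(\mathcal{W})$, which a priori only accepts $C^1_c(\Om)$ test functions. When $0<\delta\le 1$, Theorems \ref{thm1}--\ref{thm2} produce $u_i\in W_0^{1,p}(\Om)$, and the density-based testing developed in Lemma \ref{testfn} and Corollary \ref{tstcor} directly validates the computation above. The delicate case is $\delta>1$: Theorem \ref{thm3} only guarantees $u_i\in W^{1,p}_{\mathrm{loc}}(\Om)$ with $u_i^{(\delta+p-1)/p}\in W_0^{1,p}(\Om)$, so the density approach is unavailable; in this regime I would instead invoke the comparison principle Lemma \ref{cp}, established in the spirit of Canino--Sciunzi \cite{Caninouni}. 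Since each $u_i$ is simultaneously a sub- and a supersolution of $(\mathcal{S})$ with identical boundary condition in the sense of Definition \ref{bc}, two applications of Lemma \ref{cp}, once to the pair $(u_1,u_2)$ and once to $(u_2,u_1)$, give $u_1\le u_2$ and $u_2\le u_1$, hence equality.

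The integrability assumption on $f$ (namely $t=(p^*)'$ if $1<p<N$, $t>1$ if $p=N$, and $t=1$ if $p>N$), together with $u_i\in L^{p-1}(\Om)$, is used throughout to ensure that the singular integrands $f\,u_i^{-\delta}(u_1-u_2)^+$ and the nonlocal double integrals appearing in the test function argument are absolutely convergent; this is what makes the Canino--Sciunzi truncation underlying Lemma \ref{cp} and the density approximation of Corollary \ref{tstcor} applicable. In short, the theorem reduces to combining Corollary \ref{tstcor} (for $\delta\le 1$) and Lemma \ref{cp} (for $\delta>1$) with the pointwise sign analysis of the singular term.
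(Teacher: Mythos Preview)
Your case $\delta>1$ matches the paper's proof exactly: apply Lemma \ref{cp} twice (each solution is both a sub- and a supersolution with the boundary condition of Definition \ref{bc}) to conclude $u_1\le u_2$ and $u_2\le u_1$. In fact the paper does not split into cases at all; it invokes Lemma \ref{cp} for every $\delta>0$, and the integrability hypothesis on $f$ is precisely the standing assumption of Section~4 under which Lemma \ref{cp} is proved.

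Your treatment of $0<\delta\le 1$, however, contains a genuine gap. You write that ``Theorems \ref{thm1}--\ref{thm2} produce $u_i\in W_0^{1,p}(\Om)$'' and then invoke Corollary \ref{tstcor}. But Theorems \ref{thm1}--\ref{thm2} are \emph{existence} results: they construct one particular solution (the monotone limit of the approximants $u_n$) and show that \emph{this} solution lies in $W_0^{1,p}(\Om)$. Theorem \ref{thm4}, by contrast, asserts uniqueness among \emph{all} weak solutions in $W^{1,p}_{\mathrm{loc}}(\Om)\cap L^{p-1}(\Om)$ satisfying the boundary condition of Definition \ref{bc}, and that definition only requires $(u-\epsilon)^+\in W_0^{1,p}(\Om)$ for each $\epsilon>0$, not $u\in W_0^{1,p}(\Om)$. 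Nothing in the paper upgrades an arbitrary weak solution to $W_0^{1,p}(\Om)$, so the hypotheses of Lemma \ref{testfn} and Corollary \ref{tstcor} are not available for $u_1,u_2$. The repair is immediate: drop the case distinction and use Lemma \ref{cp} for all $\delta>0$, exactly as you already do for $\delta>1$.
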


As a consequence of Theorem \ref{thm4}, we obtain the following symmetry result:

\begin{Theorem}\label{sym}
Let $\delta>0$, $1<p<\infty$ and $f\in L^t(\Om)\setminus\{0\}$ be nonnegative such that $t=(p^*)'$ if $1<p<N$, $t>1$ if $p=N$ and $t=1$ if $p>N$. Assume that $\Om$ and $f$ be symmetric with respect to some hyperplane
$$
\mathcal{H}_{\lambda}^{\nu}:=\{x.\nu=\lambda\},\quad\lambda\in\mathbb{R},\quad\nu\in S^{N-1}.
$$
Then every weak solution $u$ in $W^{1,p}_{\mathrm{loc}}(\Omega)\cap L^{p-1}(\Omega)$ of the problem $(\mathcal{S})$ is symmetric with respect to the hyperplane $\mathcal{H}_{\lambda}^{\nu}$. In particular, if $\Om$ is a ball or an annulus centered at the origin and $f$ is radially symmetric, then $u$ is radially symmetric.
\end{Theorem}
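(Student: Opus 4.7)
The plan is to deduce symmetry from the uniqueness result Theorem \ref{thm4} by exhibiting the reflection of $u$ as another weak solution to $(\mathcal{S})$. Let $\sigma:\mathbb{R}^N\to\mathbb{R}^N$ denote the orthogonal reflection with respect to the hyperplane $\mathcal{H}_\lambda^\nu$. Since $\sigma$ is a Euclidean isometry with $|\det D\sigma|=1$ and $\sigma(\Om)=\Om$ by hypothesis, the function $\tilde u(x):=u(\sigma(x))$ is well-defined and inherits from $u$ membership in $W^{1,p}_{\mathrm{loc}}(\Om)\cap L^{p-1}(\Om)$ together with $\tilde u>0$ in $\Om$ and $\tilde u=0$ in $\mathbb{R}^N\setminus\Om$. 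I would first check the boundary condition of Definition \ref{bc} for $\tilde u$: for every $\epsilon>0$, the identity $(\tilde u-\epsilon)^+=((u-\epsilon)^+)\circ\sigma$ and the fact that composition with an isometry of $\Om$ preserves $W_0^{1,p}(\Om)$ yield $(\tilde u-\epsilon)^+\in W_0^{1,p}(\Om)$.

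Next I would verify that $\tilde u$ is a weak solution of $(\mathcal{S})$. Given $\phi\in C_c^1(\Om)$, set $\tilde\phi:=\phi\circ\sigma\in C_c^1(\Om)$. Applying the change of variables $x\mapsto\sigma(x)$ (and $y\mapsto\sigma(y)$ in the double integral), one uses three ingredients: the orthogonality of $D\sigma$ gives $|\nabla\tilde u(x)|^{p-2}\nabla\tilde u(x)\cdot\nabla\phi(x)=\bigl(|\nabla u|^{p-2}\nabla u\cdot\nabla\tilde\phi\bigr)(\sigma(x))$; the isometry identity $|\sigma(x)-\sigma(y)|=|x-y|$ makes $d\mu$ invariant so that $\mathcal{A}(\tilde u(x,y))(\phi(x)-\phi(y))\,d\mu$ transforms into $\mathcal{A}(u(x,y))(\tilde\phi(x)-\tilde\phi(y))\,d\mu$; and the assumed symmetry $f\circ\sigma=f$ gives $f(x)\tilde u(x)^{-\delta}\phi(x)=\bigl(f\,u^{-\delta}\tilde\phi\bigr)(\sigma(x))$. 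Consequently $(\mathcal{W})$ for $\tilde u$ tested against $\phi$ reduces to $(\mathcal{W})$ for $u$ tested against $\tilde\phi$, which holds by hypothesis. As $\phi$ ranges over $C_c^1(\Om)$ so does $\tilde\phi$, hence $\tilde u$ is a weak solution of $(\mathcal{S})$.

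By Theorem \ref{thm4}, applied under the standing integrability assumption on $f$, the problem $(\mathcal{S})$ admits at most one weak solution in $W^{1,p}_{\mathrm{loc}}(\Om)\cap L^{p-1}(\Om)$, so $u=\tilde u$, i.e.\ $u\circ\sigma=u$, which is the claimed symmetry with respect to $\mathcal{H}_\lambda^\nu$. For the radial assertion, when $\Om$ is a ball or an annulus centered at the origin, every hyperplane $\mathcal{H}_0^\nu$ through the origin leaves $\Om$ invariant, and radiality of $f$ makes it invariant under $\mathcal{H}_0^\nu$ as well; applying the first part to every $\nu\in S^{N-1}$ shows that $u$ is invariant under the subgroup of $O(N)$ generated by reflections through the origin, which is $O(N)$ itself. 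Hence $u$ is radial.

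I expect the only delicate step to be the verification that the nonlocal term transforms correctly: unlike the gradient and singular terms, the integrand $\mathcal{A}(u(x,y))(\phi(x)-\phi(y))$ mixes values at two points, so one has to carry out the change of variables in both arguments and exploit the isometric invariance of the Gagliardo kernel together with the fact that $\sigma$ is its own inverse. Everything else is a direct application of uniqueness.
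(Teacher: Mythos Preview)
Your proposal is correct and follows exactly the paper's approach: define the reflected function, observe it is again a weak solution of $(\mathcal{S})$, and invoke the uniqueness Theorem~\ref{thm4}. The paper's proof is terser---it reduces to the case of reflection in the $x_1$-direction and simply asserts that the reflected function is a weak solution---whereas you spell out the verification of the boundary condition and the invariance of each term under the isometry, but the argument is the same.
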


Our main result associated with $\mu(\Omega)$ stated as follows:
\begin{Theorem}\label{thm5}
Let $0<\delta<1$ and $1<p<\infty$. Suppose that $f\in L^m(\Omega)\setminus\{0\}$ is nonnegative, where $m$ is given by \eqref{m} and $u_\delta\in W_0^{1,p}(\Om)$ is the weak solution of the problem $(\mathcal{S})$ given by Theorem \ref{thm1}. Then we have
\begin{enumerate} 
\item[(a)] (Extremal)
\begin{multline*}
\mu(\Omega):=\inf_{v\in W_{0}^{1,p}(\Omega)\setminus\{0\}}\left\{\int_{\Omega}|\nabla v|^p\,dx+\int\limits_{\mathbb{R}^N}\int\limits_{\mathbb{R}^N}\frac{|v(x)-v(y)|^p}{|x-y|^{N+ps}}\,dx\,dy:\int_{\Omega}|v|^{1-\delta}f\,dx=1\right\}\\
=\left(\int_{\Omega}|\nabla u_{\delta}|^p\,dx+\int\limits_{\mathbb{R}^N}\int\limits_{\mathbb{R}^N}\frac{|u_{\delta}(x)-u_{\delta}(y)|^p}{|x-y|^{N+ps}}\,dx\,dy\right)^\frac{1-\delta-p}{1-\delta}.
\end{multline*}
\item[(b)] (Mixed Sobolev Inequality) Moreover, for every $v\in W_{0}^{1,p}(\Omega)$, the following mixed Sobolev inequality
\begin{equation}\label{inequality2}
C\left(\int_{\Omega}|v|^{1-\delta}f\,dx\right)^\frac{p}{1-\delta}\leq\int_{\Omega}|\nabla v|^p\,dx+\int\limits_{\mathbb{R}^N}\int\limits_{\mathbb{R}^N}\frac{|v(x)-v(y)|^p}{|x-y|^{N+ps}}\,dx\,dy,
\end{equation}
holds, if and only if $$C\leq \mu(\Omega).$$
\item[(c)] (Simplicity) If for some $w\in W_0^{1,p}(\Om)\setminus\{0\}$, the equality
\begin{equation}\label{sim}
\mu(\Om)\left(\int_{\Omega}|w|^{1-\delta}f\,dx\right)^\frac{p}{1-\delta}=\int_{\Omega}|\nabla w|^p\,dx+\int\limits_{\mathbb{R}^N}\int\limits_{\mathbb{R}^N}\frac{|w(x)-w(y)|^p}{|x-y|^{N+ps}}\,dx\,dy,
\end{equation}
holds, then $w=ku_\delta$ for some constant $k$.
\end{enumerate}
\end{Theorem}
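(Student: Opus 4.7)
The plan is to apply the direct method of the calculus of variations to the constrained minimization problem defining $\mu(\Omega)$, derive the Euler--Lagrange equation for the resulting minimizer, and then identify the minimizer as a scalar multiple of the given solution $u_\delta$ of $(\mathcal{S})$. First I would show the infimum is attained. Taking a minimizing sequence $\{v_n\} \subset W_0^{1,p}(\Omega)$ with $\int_\Omega f|v_n|^{1-\delta}\,dx = 1$ and $\|v_n\|^p \to \mu(\Omega)$, I may assume $v_n \geq 0$ because replacing $v_n$ by $|v_n|$ does not increase either the local gradient energy or the Gagliardo seminorm. Boundedness in $W_0^{1,p}(\Omega)$ produces a weak limit $v^* \geq 0$; the exponent $m$ in \eqref{m} is chosen precisely so that the compact embedding of $W_0^{1,p}(\Omega)$ into $L^{(1-\delta)m'}(\Omega)$ from Lemma \ref{embedding} allows H\"older's inequality to pass the constraint to the limit, yielding $\int_\Omega f(v^*)^{1-\delta}\,dx = 1$. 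Weak lower semicontinuity of both energies then gives $\|v^*\|^p \leq \mu(\Omega)$, so $v^*$ is a minimizer.

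Next I would apply the Lagrange multiplier rule to smooth variations and, by a density argument, test with $v^*$ itself to compute the multiplier. This shows $v^*$ is a weak solution of
\begin{equation*}
-\Delta_p v^* + (-\Delta_p)^s v^* = \mu(\Omega)\, f\, (v^*)^{-\delta} \quad \text{in } \Omega, \qquad v^* = 0 \text{ on } \partial\Omega.
\end{equation*}
Setting $c := \mu(\Omega)^{1/(p-1+\delta)}$, the scaled function $\bar u := v^*/c$ is a weak solution of $(\mathcal{S})$ with the original datum $f$. Invoking the uniqueness result (via Theorem \ref{thm4}, or equivalently the comparison principle of Lemma \ref{cp}), I conclude $\bar u = u_\delta$, so $v^* = c\, u_\delta$. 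Testing $(\mathcal{S})$ against $u_\delta$ gives $T := \|u_\delta\|^p = \int_\Omega f\, u_\delta^{1-\delta}\,dx$; combining this with the normalization $c^{1-\delta} T = 1$ yields $T = \mu(\Omega)^{-(1-\delta)/(p-1+\delta)}$, which rearranges to $\mu(\Omega) = T^{(1-\delta-p)/(1-\delta)}$, establishing (a).

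Parts (b) and (c) follow by homogeneity. The functional $J(v) := \|v\|^p \big/ \bigl(\int_\Omega f|v|^{1-\delta}\,dx\bigr)^{p/(1-\delta)}$ is $0$-homogeneous on $W_0^{1,p}(\Omega)\setminus\{0\}$, so the inequality \eqref{inequality2} with a given constant $C$ is equivalent to $J(v) \geq C$ for all nonzero $v$, hence holds if and only if $C \leq \inf J = \mu(\Omega)$. For (c), any $w$ saturating \eqref{sim} satisfies $J(w) = \mu(\Omega)$; after renormalizing to $\tilde w := w/\bigl(\int_\Omega f|w|^{1-\delta}\,dx\bigr)^{1/(1-\delta)}$, it becomes an admissible minimizer, and the identification step of (a) forces $\tilde w = c\, u_\delta$, so $w = k\, u_\delta$ for some $k \in \mathbb{R}$.

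The hardest step I anticipate is the identification of the minimizer with $u_\delta$. The integrability window $f \in L^m$ from \eqref{m} is strictly weaker than the $f \in L^t$ required in Theorem \ref{thm4} when $1 < p < N$ and $0 < \delta < 1$, so a direct invocation may be insufficient; one must either verify that uniqueness in the restricted class $W_0^{1,p}(\Omega)$ holds under the weaker integrability by running the comparison-principle argument behind Lemma \ref{cp} directly at this regularity, or bypass uniqueness entirely and prove simplicity at the variational level through a hidden-convexity argument along the curves $\sigma_t := ((1-t) v_0^p + t v_1^p)^{1/p}$ of Belloni--Kawohl / Brasco--Franzina type.
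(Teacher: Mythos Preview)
Your route differs from the paper's and is largely sound, but one step is a genuine gap. The paper never runs the direct method on the constrained problem nor derives an Euler--Lagrange equation for an abstract minimizer; instead it works \emph{forward} from $u_\delta$. Via the approximating problems it proves (Lemma~\ref{minprop}) that $u_\delta$ itself minimizes the unconstrained functional $I_\delta(v)=\tfrac1p\|v\|^p-\tfrac{1}{1-\delta}\int_\Omega f(v^+)^{1-\delta}\,dx$, and then obtains the formula for $\mu(\Omega)$ by optimizing $I_\delta(\lambda|v|)$ over $\lambda>0$ for an arbitrary $v\in S_\delta$. Part~(c) is handled purely at the level of the norm: one first shows any extremal has constant sign (strict inequality $\big||a|-|b|\big|<|a-b|$ in the Gagliardo term), then uses strict convexity of $\|\cdot\|$ on the midpoint $(v+V_\delta)/2$. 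No PDE uniqueness enters. Your approach, by contrast, buys a cleaner narrative (minimizer $\Rightarrow$ Euler--Lagrange $\Rightarrow$ scaled solution of $(\mathcal S)$ $\Rightarrow$ uniqueness), and your integrability worry is in fact harmless: the paper proves exactly the uniqueness you need in $W_0^{1,p}(\Omega)$ for $f\in L^1$ (Corollary~\ref{tstcor}, a short density consequence of Lemma~\ref{testfn}), so Theorem~\ref{thm4} is simply not the right reference.

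The real gap is the sentence ``apply the Lagrange multiplier rule to smooth variations''. The constraint functional $v\mapsto\int_\Omega f|v|^{1-\delta}\,dx$ is \emph{not} Gateaux differentiable at any $v$ that vanishes on a set of positive $f\,dx$-measure, because $0<1-\delta<1$; its formal derivative $(1-\delta)\int_\Omega f\,v^{-\delta}\phi\,dx$ need not even be finite. So you cannot write down the Euler--Lagrange equation for $v^*$ until you know $v^*>0$ a.e.\ on $\{f>0\}$, which is part of what you are trying to conclude. The paper's approximation device (replace $I_\delta$ by the $C^1$ functionals $I_n$ built from $G_n$, derive the Euler--Lagrange equation at that level, then pass to the limit; see Lemmas~\ref{lemma1}--\ref{minprop}) is precisely what circumvents this non-differentiability. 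If you want to keep your route, you must either run your direct method through the regularized problems and pass to the limit, or first prove strict interior positivity of the abstract minimizer $v^*$ by an independent strong-minimum-principle argument before invoking any multiplier rule. Finally, for (c) you should explicitly exclude sign-changing $w$ before applying the identification step, since your Euler--Lagrange/uniqueness argument from (a) was carried out for a nonnegative minimizer.
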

\begin{Corollary}\label{ansiowgtrmk}
From Theorem \ref{thm5}, we have
\begin{equation}\label{anisowgtrmk2}
\begin{split}
\mu(\Omega)=\int_{\Omega}|\nabla V_{\delta}|^p\,dx+\int\limits_{\mathbb{R}^N}\int\limits_{\mathbb{R}^N}\frac{|V_{\delta}(x)-V_{\delta}(y)|^p}{|x-y|^{N+ps}}\,dx\,dy,
\end{split}
\end{equation}
Moreover, $V_{\delta}\in S_\delta$ and satisfies the following singular problem
\begin{equation}\label{rmkeqn}
-\Delta_p V_{\delta}+(-\Delta_p)^s V_\delta=\mu(\Omega)f V_{\delta}^{-\delta}\text{ in }\Omega,\,V_{\delta}>0 \text{ in }\Omega.
\end{equation}
\end{Corollary}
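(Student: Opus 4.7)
The plan is to obtain $V_\delta$ as an explicit positive rescaling of the extremal $u_\delta$ furnished by Theorem \ref{thm5}, so that both the energy identity \eqref{anisowgtrmk2} and the PDE \eqref{rmkeqn} reduce to an algebraic check based on the $(p-1)$-homogeneity of the operators $-\Delta_p$ and $(-\Delta_p)^s$.

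First I set $V_\delta := c\, u_\delta$ for a scalar $c>0$ to be chosen. Since $u_\delta$ solves $(\mathcal{S})$ and both operators are $(p-1)$-homogeneous,
$$
-\Delta_p V_\delta + (-\Delta_p)^s V_\delta = c^{p-1}\left(-\Delta_p u_\delta + (-\Delta_p)^s u_\delta\right) = c^{p-1}\frac{f}{u_\delta^\delta} = c^{p-1+\delta}\frac{f}{V_\delta^\delta}.
$$
Selecting $c := \mu(\Om)^{1/(p-1+\delta)}$ immediately produces \eqref{rmkeqn}, and the strict positivity of $V_\delta$ in $\Om$ is inherited from that of $u_\delta$.

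Next, to verify \eqref{anisowgtrmk2} I denote
$$
M := \int_\Om |\nabla u_\delta|^p\,dx + \int\limits_{\mathbb{R}^N}\int\limits_{\mathbb{R}^N}\frac{|u_\delta(x)-u_\delta(y)|^p}{|x-y|^{N+ps}}\,dx\,dy,
$$
so that Theorem \ref{thm5}(a) reads $\mu(\Om) = M^{(1-\delta-p)/(1-\delta)}$. A short manipulation of exponents gives $c = M^{-1/(1-\delta)}$, and since the mixed $p$-energy is $p$-homogeneous under scaling, the left-hand side of \eqref{anisowgtrmk2} equals $c^p M = M^{(1-\delta-p)/(1-\delta)} = \mu(\Om)$, as required.

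Finally, membership $V_\delta \in S_\delta$ amounts to the normalization $\int_\Om f V_\delta^{1-\delta}\,dx = 1$. Testing the weak formulation $(\mathcal{W})$ with $\phi = u_\delta$ itself (admissible since $u_\delta \in W_0^{1,p}(\Om)$ for $0<\delta<1$, via the density/truncation argument of Lemma \ref{testfn}) yields $M = \int_\Om f u_\delta^{1-\delta}\,dx$, and hence
$$
\int_\Om f V_\delta^{1-\delta}\,dx = c^{1-\delta} M = M^{-1}\cdot M = 1.
$$
The only genuinely delicate point in the whole argument is the justification of using $u_\delta$ as a test function in $(\mathcal{W})$; once Lemma \ref{testfn} handles that density issue, everything else is straightforward homogeneity bookkeeping from Theorem \ref{thm5}.
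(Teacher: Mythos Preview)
Your proof is correct and follows essentially the same route as the paper. In the proof of Theorem \ref{thm5}(a) the paper introduces $V_\delta=\tau_\delta u_\delta$ with $\tau_\delta=\big(\int_\Omega u_\delta^{1-\delta}f\,dx\big)^{-1/(1-\delta)}$ so that $V_\delta\in S_\delta$ by construction, and \eqref{extremal} already gives $\|V_\delta\|^p=\mu(\Omega)$; the PDE \eqref{rmkeqn} then follows by $(p-1)$-homogeneity exactly as you write. Your constant $c=\mu(\Omega)^{1/(p-1+\delta)}$ coincides with $\tau_\delta$ once one uses \eqref{useful1}, so the two arguments differ only in which of the three properties (PDE, normalization, energy identity) is taken as the defining one for the scaling factor.
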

We also obtain the following necessary and sufficient condition for the validity of the mixed Sobolev inequality.
\begin{Theorem}\label{nsthm}
Let $0<\delta<1<p<\infty$ and $f\in L^1(\Om)\setminus\{0\}$ be nonnegative. Then, for every $v\in W_0^{1,p}(\Om)$, the mixed Sobolev inequality \eqref{inequality2} holds, if and only if the problem $\mathcal{(S)}$ admits a weak solution in $W_0^{1,p}(\Om)$.
\end{Theorem}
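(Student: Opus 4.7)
\textbf{Proof plan for Theorem \ref{nsthm}.} The strategy for both directions is to reduce to the bounded-datum theory of Theorem \ref{thm1} and Theorem \ref{thm5} via the monotone approximation $f_n := \min\{f,n\} \in L^\infty(\Omega) \subset L^m(\Omega)$. For each $n$, let $u_n \in W_0^{1,p}(\Omega)$ denote the weak solution of $(\mathcal{S})$ with datum $f_n$ supplied by Theorem \ref{thm1}. Since $f_n$ is increasing, the comparison principle (Lemma \ref{cp}) yields $u_n \nearrow u^\ast$ pointwise for some measurable $u^\ast$, and on every compact $K \Subset \Omega$ a quantitative uniform lower bound $u_n \geq u_1 \geq c_K > 0$ holds.

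For the direction ``Sobolev inequality $\Rightarrow$ existence,'' I would test the weak equation for $u_n$ against $u_n$ itself to obtain $\|u_n\|^p = \int_\Omega f_n u_n^{1-\delta}\,dx =: I_n$, apply the assumed inequality \eqref{inequality2} to $v=u_n$ and use $f_n \leq f$ to deduce $C\,I_n^{p/(1-\delta)} \leq I_n$. Since $p/(1-\delta) > 1$, this gives the uniform a priori bound $I_n = \|u_n\|^p \leq C^{-(1-\delta)/(p-1+\delta)}$. Extracting $u_n \rightharpoonup u^\ast$ in $W_0^{1,p}(\Omega)$, the gradient convergence result (Theorem \ref{grad}) lets me pass to the limit in the mixed $p$-Laplace terms. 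On the singular right-hand side the domination $|f_n u_n^{-\delta}\phi| \leq c_K^{-\delta}\|\phi\|_\infty\, f \in L^1(\Omega)$ on $K := \operatorname{supp}\phi$ allows the dominated convergence theorem to identify $u^\ast$ as a weak solution.

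For the direction ``existence $\Rightarrow$ inequality,'' given the hypothetical solution $u_0 \in W_0^{1,p}(\Omega)$ of $(\mathcal{S})$, I observe that $u_n$ is a subsolution of $(\mathcal{S})$ with the full datum $f$ (since $f_n \leq f$), so Lemma \ref{cp} yields $u_n \leq u_0$. Testing both equations produces $\|u_n\|^p = \int f_n u_n^{1-\delta} \leq \int f u_0^{1-\delta} = \|u_0\|^p$. Because $f_n \in L^m(\Omega)$, Theorem \ref{thm5} applies and gives both $\mu_n(\Omega) = \|u_n\|^{p(1-\delta-p)/(1-\delta)}$ and the mixed Sobolev inequality for the datum $f_n$ with constant $\mu_n(\Omega)$. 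As the exponent $p(1-\delta-p)/(1-\delta)$ is strictly negative (since $p > 1 > 1-\delta$) and $\|u_n\| \leq \|u_0\|$, the uniform lower bound $\mu_n(\Omega) \geq \mu_0 := \|u_0\|^{p(1-\delta-p)/(1-\delta)} > 0$ holds. Letting $n \to \infty$ by monotone convergence of $f_n|v|^{1-\delta}$ then yields \eqref{inequality2} with $C = \mu_0$ and, as a byproduct, forces $\int_\Omega f |v|^{1-\delta}\,dx < \infty$ for every $v \in W_0^{1,p}(\Omega)$.

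The main technical obstacle will be the passage to the limit in the singular term during the first direction: the dominated convergence step requires the quantitative uniform nondegeneracy $u_n \geq c_K > 0$ on each compact $K \Subset \Omega$, which must be extracted from the existence theory developed for bounded data in Section~3. A secondary delicate point is verifying that Lemma \ref{cp} genuinely compares a subsolution with strictly smaller datum against a bona fide solution of $(\mathcal{S})$ in our mixed local/nonlocal framework; this bridge is what transfers information from the hypothetical $u_0$ down to the approximating sequence and makes the second implication work.
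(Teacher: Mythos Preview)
Your forward implication (Sobolev inequality $\Rightarrow$ existence) is essentially the paper's argument: the paper simply quotes Lemma~\ref{nsbdd}, which is exactly your a priori bound $\|u_n\|^p \le \int f\,u_n^{1-\delta} \le C^{-1}\|u_n\|^{1-\delta}$, and then invokes the limit procedure of Theorem~\ref{thm1}. The only cosmetic difference is that the paper works with the regularised sequence from problem~$(\mathcal{A})$ rather than with the solutions for the truncated data $f_n$; this is slightly cleaner because Theorem~\ref{grad} is literally stated for that sequence, whereas for yours you would have to revisit its proof.

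Your reverse implication, however, takes a genuinely different and considerably longer route than the paper. You approximate $f$ by $f_n$, apply Theorem~\ref{thm5} to each $f_n$, control $\mu_n(\Omega)$ via a comparison $u_n\le u_0$, and pass to the limit. The paper bypasses all of this with a one–line H\"older trick: by Lemma~\ref{testfn} one may test the equation satisfied by the given solution $u$ with $u$ itself (yielding $\|u\|^p=\int f\,u^{1-\delta}$) and with $|v|$ (yielding $\int f\,|v|\,u^{-\delta}\le C\|u\|^{p-1}\|v\|$); then
\[
\int_\Omega f\,|v|^{1-\delta}\,dx=\int_\Omega \bigl(f\,|v|\,u^{-\delta}\bigr)^{1-\delta}\bigl(f\,u^{1-\delta}\bigr)^{\delta}\,dx
\le \Bigl(\int f\,|v|\,u^{-\delta}\Bigr)^{1-\delta}\Bigl(\int f\,u^{1-\delta}\Bigr)^{\delta}
\le C\,\|u\|^{p+\delta-1}\|v\|^{1-\delta},
\]
which is \eqref{inequality2}. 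This avoids approximation, Theorem~\ref{thm5}, and any comparison principle altogether.

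One technical caution on your version: you invoke Lemma~\ref{cp} to obtain $u_n\le u_0$, but the standing hypothesis of Section~4 requires $f\in L^{(p^*)'}$ when $1<p<N$, which is not assumed here (only $f\in L^1$). Since both $u_n$ and $u_0$ lie in $W_0^{1,p}(\Omega)$, the comparison you need follows instead from the elementary argument of Corollary~\ref{tstcor} (test both equations with $(u_n-u_0)^+$ via Lemma~\ref{testfn}), so your plan can be repaired---but as written the citation is not quite licit.
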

Finally, we have the following results in the separate local and nonlocal cases.
\begin{Theorem}\label{nsthm1}
Let $0<\delta<1<p<\infty$ and $f\in L^1(\Om)\setminus\{0\}$ be nonnegative. Then, for every $v\in W_0^{1,p}(\Om)$, the Sobolev inequality
\begin{equation}\label{pine}
C\left(\int_{\Omega}|v|^{1-\delta}f\,dx\right)^\frac{p}{1-\delta}\leq\int_{\Omega}|\nabla v|^p\,dx,
\end{equation}
holds, if and only if the singular $p$-Laplace equation $(\mathcal{P})$ admits a weak solution in $W_0^{1,p}(\Om)$.
\end{Theorem}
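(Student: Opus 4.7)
The plan is to mirror the strategy of Theorem \ref{nsthm} with the nonlocal term discarded throughout. For the sufficiency direction, assume (\ref{pine}) holds for some $C>0$ and consider the energy functional
\begin{equation*}
F(v)=\frac{1}{p}\int_\Omega |\nabla v|^p\,dx-\frac{1}{1-\delta}\int_\Omega |v|^{1-\delta}f\,dx,\qquad v\in W_0^{1,p}(\Omega).
\end{equation*}
Because $0<1-\delta<p$, the inequality (\ref{pine}) makes $F$ coercive and bounded below. The gradient term is convex and weakly lower semicontinuous, while the compactness of the embedding $W_0^{1,p}(\Omega)\hookrightarrow L^{1-\delta}(\Omega,f\,dx)$, granted by Lemma \ref{embedding} together with $f\in L^1(\Omega)$, renders the second term sequentially continuous along weakly convergent sequences. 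The direct method then produces a minimizer $u\in W_0^{1,p}(\Omega)$, which can be taken nonnegative by replacing $v$ with $|v|$ and is nontrivial because $f\not\equiv 0$ forces $\inf F<0$. Positivity in $\Omega$ and the Euler--Lagrange derivation are obtained by the same truncation and monotone/Fatou convergence scheme used in the proof of Theorem \ref{thm1}, yielding a weak solution of $(\mathcal{P})$ in $W_0^{1,p}(\Omega)$.

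Conversely, let $u\in W_0^{1,p}(\Omega)$ be a weak solution of $(\mathcal{P})$. Testing the equation with $u$ itself, admissible after the density argument in the local analogue of Lemma \ref{testfn}, gives the energy identity $\int_\Omega |\nabla u|^p\,dx=\int_\Omega u^{1-\delta}f\,dx$. For an arbitrary nonnegative $v\in W_0^{1,p}(\Omega)$, I would feed a regularisation of $v^p/u^{p-1}$ into the equation for $u$ and invoke the classical Picone inequality for the $p$-Laplacian to obtain
\begin{equation*}
\int_\Omega\frac{v^p f}{u^{p-1+\delta}}\,dx\leq\int_\Omega|\nabla v|^p\,dx.
\end{equation*}
Next, the factorisation
\begin{equation*}
v^{1-\delta}f=\left(\frac{v^p f}{u^{p-1+\delta}}\right)^{(1-\delta)/p}\bigl(u^{1-\delta}f\bigr)^{(p-1+\delta)/p}
\end{equation*}
together with H\"older's inequality with conjugate exponents $p/(1-\delta)$ and $p/(p-1+\delta)$ yields
\begin{equation*}
\int_\Omega v^{1-\delta}f\,dx\leq\left(\int_\Omega|\nabla v|^p\,dx\right)^{(1-\delta)/p}\left(\int_\Omega u^{1-\delta}f\,dx\right)^{(p-1+\delta)/p}.
\end{equation*}
Raising to the power $p/(1-\delta)$ and using the energy identity then produces (\ref{pine}) with the explicit constant $C=\bigl(\int_\Omega|\nabla u|^p\,dx\bigr)^{(1-\delta-p)/(1-\delta)}$; sign-changing $v$ are reduced to the nonnegative case by passing to $|v|$ and using $\lvert\nabla\lvert v\rvert\rvert\leq\lvert\nabla v\rvert$ a.e.

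The hard part will be the rigorous justification of the test function $v^p/u^{p-1}$ (and of $u$ itself) in the weak formulation, since $u$ vanishes on $\partial\Omega$ and $v^p/u^{p-1}$ need not lie in $W_0^{1,p}(\Omega)$. This is handled by the standard truncation of $u$ from below by $\varepsilon>0$, application of Picone to $u+\varepsilon$, and passage to the limit via Fatou's lemma---a mechanism already deployed in the proof of Theorem \ref{nsthm} together with the comparison principle of Lemma \ref{cp}, so the arguments transfer verbatim to the purely local setting once all nonlocal contributions are deleted.
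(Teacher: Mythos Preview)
Your proposal is workable in outline but diverges from the paper's proof in both directions, and the converse direction can be done with considerably less effort.

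For the forward direction, the paper does not minimize the singular functional $F$ directly. Instead, exactly as in Theorem~\ref{nsthm}, it works with the approximating solutions $u_n$ of the desingularized problem (the local analogue of $(\mathcal{A})$, supplied by \cite[Lemma~4.1]{Canino}), uses inequality~\eqref{pine} to show $\{u_n\}$ is bounded in $W_0^{1,p}(\Omega)$ (the local version of Lemma~\ref{nsbdd}), and then passes to the limit as in the proof of Theorem~\ref{thm1}. Your minimization route is legitimate, but two points need care: (i) with only $f\in L^1(\Omega)$ you do \emph{not} have a compact embedding into $L^{1-\delta}(\Omega,f\,dx)$; what you actually need---and what suffices---is Fatou's lemma applied to $|v_n|^{1-\delta}f$ along an a.e.\ convergent subsequence to get weak lower semicontinuity of $F$; (ii) your reference to Theorem~\ref{thm1} for the Euler--Lagrange derivation is misplaced, since that theorem passes to the limit in approximate \emph{equations}, not in a variational inequality for a non-$C^1$ functional. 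The derivation of the equation from the minimizer (including positivity) requires its own argument.

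For the converse, the paper's approach is markedly simpler than yours and avoids Picone entirely. One tests the equation first with $u$ to get $\int_\Omega|\nabla u|^p\,dx=\int_\Omega u^{1-\delta}f\,dx$, and then with $|v|$ (admissible by the local version of Lemma~\ref{testfn}) together with H\"older on the left-hand side to obtain $\int_\Omega |v|\,u^{-\delta}f\,dx\le \|\nabla u\|_{L^p}^{p-1}\|\nabla v\|_{L^p}$. Writing
\[
|v|^{1-\delta}f=\bigl(|v|\,u^{-\delta}f\bigr)^{1-\delta}\bigl(u^{1-\delta}f\bigr)^{\delta}
\]
and applying H\"older with exponents $\tfrac{1}{1-\delta}$ and $\tfrac{1}{\delta}$ gives \eqref{pine} directly. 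This bypasses the delicate justification of $v^p/u^{p-1}$ as a test function that your Picone route demands; the mechanism you invoke from Theorem~\ref{nsthm} and Lemma~\ref{cp} is not actually used there for this purpose.
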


\begin{Theorem}\label{nsthm2}
Let $0<\delta<1<p<\infty$ and $f\in L^1(\Om)\setminus\{0\}$ be nonnegative. Then, for every $v\in W_0^{s,p}(\Om)$, the Sobolev inequality
\begin{equation}\label{fine}
C\left(\int_{\Omega}|v|^{1-\delta}f\,dx\right)^\frac{p}{1-\delta}\leq\int\limits_{\mathbb{R}^N}\int\limits_{\mathbb{R}^N}\frac{|v(x)-v(y)|^p}{|x-y|^{N+ps}}\,dx\,dy,
\end{equation}
holds, if and only if the singular fractional $p$-Laplace equation $\mathcal{(F)}$ admits a weak solution in $W_0^{s,p}(\Om)$.
\end{Theorem}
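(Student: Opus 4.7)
The argument parallels the proof of Theorem \ref{nsthm} (the mixed case) almost verbatim, with every occurrence of $\int_\Omega|\nabla v|^p\,dx+\iint|v(x)-v(y)|^p\,d\mu$ replaced by the purely nonlocal seminorm, and with the ambient space $W_0^{1,p}(\Omega)$ replaced by $W_0^{s,p}(\Omega)$. The strategy is also in the spirit of Ercole--Pereira \cite{EP1}, which treats a closely related fractional problem. Thus the plan is to adapt, rather than reinvent, the variational machinery.

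For sufficiency (the inequality \eqref{fine} implies existence), I would set
\begin{equation*}
\zeta(\Omega):=\inf_{v\in W_0^{s,p}(\Omega)\setminus\{0\}}\left\{\iint_{\mathbb{R}^N\times\mathbb{R}^N}\frac{|v(x)-v(y)|^p}{|x-y|^{N+ps}}\,dx\,dy:\int_\Omega|v|^{1-\delta}f\,dx=1\right\},
\end{equation*}
observe that \eqref{fine} gives $\zeta(\Omega)\geq C>0$, and run the direct method: a minimizing sequence $\{v_n\}$ is bounded in $W_0^{s,p}(\Omega)$, admits a weak limit $V$, and by lower semicontinuity of the Gagliardo seminorm together with the compact embedding $W_0^{s,p}(\Omega)\hookrightarrow L^r(\Omega)$ (applied to $r=(1-\delta)m'$ via H\"older against $f\in L^m$), $V$ is admissible and realizes $\zeta(\Omega)$. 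Replacing $V$ by $|V|$ one may take $V\ge 0$, and strict positivity in $\Omega$ follows from a fractional Hopf/strong maximum principle. The Lagrange multiplier rule then yields that $V$ satisfies, for every $\phi\in C_c^1(\Omega)$,
\begin{equation*}
\iint_{\mathbb{R}^N\times\mathbb{R}^N}\mathcal{A}\big(V(x,y)\big)\big(\phi(x)-\phi(y)\big)\,d\mu=\zeta(\Omega)\int_\Omega fV^{-\delta}\phi\,dx,
\end{equation*}
so that the rescaling $u:=\zeta(\Omega)^{1/(p-1+\delta)}V$ solves $(\mathcal{F})$ in $W_0^{s,p}(\Omega)$.

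For necessity (existence implies \eqref{fine}), I would argue exactly as in the nonlocal analogue of Theorem \ref{thm5}(b): given a weak solution $u\in W_0^{s,p}(\Omega)$ of $(\mathcal{F})$, test with $u$ (using the density lemma for test functions, the nonlocal counterpart of Lemma \ref{testfn}) to get $\iint|u(x)-u(y)|^p\,d\mu=\int f u^{1-\delta}\,dx$, so that $u/\bigl(\int fu^{1-\delta}\bigr)^{1/(1-\delta)}$ is admissible and hence $\zeta(\Omega)<\infty$. Then the sharp inequality, with the same constant, follows by homogenizing and taking the infimum as in Theorem \ref{thm5}(b), exploiting hidden-convexity/Picone-type inequalities for the fractional $p$-Laplacian.

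The main obstacle is the singular testing step on the purely nonlocal side: showing that the candidate Euler--Lagrange multiplier equation actually holds against arbitrary $\phi\in C_c^1(\Omega)$ when $V$ has only $W_0^{s,p}$-regularity and $V^{-\delta}$ may blow up near $\partial\Omega$. This is handled by a truncation-and-limit scheme (replace $V$ by $V+\varepsilon$ or $\max\{V,\varepsilon\}$ in the variational perturbation, derive the equation for the truncated functional, and pass to the limit using Fatou and the a priori integrability $fV^{-\delta}\in L^1_{\mathrm{loc}}(\Omega)$ produced by the test function construction). Once this is in place, both directions reduce to bookkeeping inherited from the proof of Theorem \ref{nsthm}.
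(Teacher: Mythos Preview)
Your sufficiency argument has a genuine gap. You invoke the compact embedding $W_0^{s,p}(\Omega)\hookrightarrow L^r(\Omega)$ ``applied to $r=(1-\delta)m'$ via H\"older against $f\in L^m$'' in order to pass to the limit in the constraint $\int_\Omega|v_n|^{1-\delta}f\,dx=1$, but the hypothesis of Theorem~\ref{nsthm2} is only $f\in L^1(\Omega)$. With $f$ merely integrable, the functional $v\mapsto\int_\Omega|v|^{1-\delta}f\,dx$ is not weakly continuous on $W_0^{s,p}(\Omega)$: Fatou gives only $\int_\Omega|V|^{1-\delta}f\,dx\le 1$ along a weakly convergent minimizing sequence, and rescaling $V$ back onto the constraint set can increase the Gagliardo seminorm, so the direct method does not close. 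The paper avoids this entirely by following the proof of Theorem~\ref{nsthm}: one works with the fractional approximated problems (the nonlocal analogue of $(\mathcal{A})$, supplied by \cite[Proposition~2.3]{Caninononloc}), uses the assumed inequality \eqref{fine} exactly as in Lemma~\ref{nsbdd} to obtain a uniform $W_0^{s,p}$ bound on the approximate solutions $u_n$ (here $f_n=\min\{f,n\}\le f$ is what makes the $L^1$ hypothesis sufficient), and then passes to the limit as in the existence proofs. No compactness of the constraint functional is needed.

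Your necessity direction is also looser than the paper's. Showing $\zeta(\Omega)<\infty$ does not by itself yield a \emph{positive} constant in \eqref{fine}; you still owe the lower bound $\zeta(\Omega)>0$, and the Picone-type route you allude to produces $\int_\Omega|v|^p u^{1-p-\delta}f\,dx$ rather than $\int_\Omega|v|^{1-\delta}f\,dx$. The paper instead uses the clean H\"older factorisation from the proof of Theorem~\ref{nsthm}: test $(\mathcal{F})$ with $u$ and with $|v|$ (via the fractional analogue of Lemma~\ref{testfn}) to obtain $[u]_{s,p}^p=\int_\Omega u^{1-\delta}f\,dx$ and $\int_\Omega|v|u^{-\delta}f\,dx\le C[u]_{s,p}^{p-1}[v]_{s,p}$, and then write
\[
\int_\Omega|v|^{1-\delta}f\,dx=\int_\Omega\bigl(|v|u^{-\delta}f\bigr)^{1-\delta}\bigl(u^{1-\delta}f\bigr)^{\delta}\,dx
\le\Bigl(\int_\Omega|v|u^{-\delta}f\,dx\Bigr)^{1-\delta}\Bigl(\int_\Omega u^{1-\delta}f\,dx\Bigr)^{\delta},
\]
which gives \eqref{fine} with an explicit positive constant depending on $[u]_{s,p}$.
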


\begin{Remark}\label{nsthmrmk}
The weak solutions for the problems $\mathcal{(P)}$ and $\mathcal{(F)}$ are defined analogous to the Definition \ref{defsolution} as in \cite[Definition $1.1$]{Caninononloc, Canino}.
\end{Remark}

As a consequence of Theorem \ref{nsthm}-\ref{nsthm1} and Lemma \ref{locnon1}, we have the following result connecting the problem $\mathcal{(P)}$ and $\mathcal{(S)}$.

\begin{Corollary}\label{nsthmcor}
Let $0<\delta<1<p<\infty$ and $f\in L^1(\Om)\setminus\{0\}$ be nonnegative. Then, the problem $\mathcal{(P)}$ admits a weak solution in $W_0^{1,p}(\Om)$, if and only if the problem $\mathcal{(S)}$ admits a weak solution in $W_0^{1,p}(\Om)$.
\end{Corollary}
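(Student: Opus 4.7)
The plan is to chain together Theorems \ref{nsthm} and \ref{nsthm1} using Lemma \ref{locnon1} as a bridge between the local and mixed Sobolev inequalities. Both sides of the equivalence get translated into Sobolev-type inequalities of the same flavor, and then compared by elementary means.

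For the implication $\mathcal{(P)} \Rightarrow \mathcal{(S)}$: assume $\mathcal{(P)}$ admits a weak solution in $W_0^{1,p}(\Om)$. By Theorem \ref{nsthm1}, there exists a constant $C>0$ such that the local Sobolev inequality \eqref{pine} holds for every $v \in W_0^{1,p}(\Om)$. Since the nonlocal seminorm is nonnegative, we immediately have
\begin{equation*}
C\left(\int_{\Omega}|v|^{1-\delta}f\,dx\right)^{\frac{p}{1-\delta}} \leq \int_{\Omega}|\nabla v|^p\,dx \leq \int_{\Omega}|\nabla v|^p\,dx + \int_{\mathbb{R}^N}\int_{\mathbb{R}^N}\frac{|v(x)-v(y)|^p}{|x-y|^{N+ps}}\,dx\,dy
\end{equation*}
for every $v \in W_0^{1,p}(\Om)$. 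Hence the mixed Sobolev inequality \eqref{inequality2} holds with the same constant, and Theorem \ref{nsthm} produces a weak solution of $\mathcal{(S)}$ in $W_0^{1,p}(\Om)$.

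For the reverse implication $\mathcal{(S)} \Rightarrow \mathcal{(P)}$: assume $\mathcal{(S)}$ admits a weak solution in $W_0^{1,p}(\Om)$. By Theorem \ref{nsthm}, the mixed Sobolev inequality \eqref{inequality2} holds with some constant $C>0$. Now invoke Lemma \ref{locnon1} to dominate the nonlocal seminorm by the local Dirichlet energy: there exists $C_1 = C_1(N,p,s,\Om)>0$ such that
\begin{equation*}
\int_{\mathbb{R}^N}\int_{\mathbb{R}^N}\frac{|v(x)-v(y)|^p}{|x-y|^{N+ps}}\,dx\,dy \leq C_1 \int_{\Omega}|\nabla v|^p\,dx \quad \text{for all } v \in W_0^{1,p}(\Om).
\end{equation*}
Combining these two inequalities yields
\begin{equation*}
C\left(\int_{\Omega}|v|^{1-\delta}f\,dx\right)^{\frac{p}{1-\delta}} \leq (1+C_1)\int_{\Omega}|\nabla v|^p\,dx,
\end{equation*}
which is exactly the local Sobolev inequality \eqref{pine} with constant $C/(1+C_1)$. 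Theorem \ref{nsthm1} then guarantees a weak solution of $\mathcal{(P)}$ in $W_0^{1,p}(\Om)$.

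There is really no obstacle here: the statement is essentially an exercise in combining the necessary-and-sufficient Sobolev characterizations of Theorems \ref{nsthm}--\ref{nsthm1} with the continuous embedding $W_0^{1,p}(\Om) \hookrightarrow W^{s,p}(\mathbb{R}^N)$ provided by Lemma \ref{locnon1}. The only thing worth noting is that the constants change in the second direction (they get multiplied by $1/(1+C_1)$), but since both equivalences are stated for some positive constant without quantifying sharpness, this loss is irrelevant.
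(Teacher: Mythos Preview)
Your proof is correct and follows exactly the route the paper indicates: the corollary is stated there as an immediate consequence of Theorems \ref{nsthm}--\ref{nsthm1} together with Lemma \ref{locnon1}, without a written-out argument. Your two-step chaining (trivial inequality for one direction, Lemma \ref{locnon1} for the other) is precisely what is intended.
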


\section{Preliminaries for the existence and regularity results}
Throughout this section, we assume the space $W_0^{1,p}(\Om)$ under the norm $\|\cdot\|$ given by \eqref{equinorm}. For $n\in\mathbb{N}$ and a nonnegative $f\in L^1(\Om)\setminus\{0\}$, let $f_n(x):=\min\{f(x),n\}$ and consider the following approximated problem
$$
-\Delta_p u+(-\Delta_p)^s u=\frac{f_n(x)}{\big(u^{+}+\frac{1}{n}\big)^{\delta}}\text{ in }\Omega,\,u=0\text{ in }\mathbb{R}^N\setminus\Om.\leqno{(\mathcal{A})}
$$
First, we obtain the following useful result for a general mixed nonsingular problem.
\begin{Lemma}\label{auxresult}
Let $g\in L^{\infty}(\Om)\setminus\{0\}$ be nonnegative. Then, there exists a unique solution $u\in W_0^{1,p}(\Omega)\cap L^{\infty}(\Om)$ of the problem
\begin{equation}\label{auxresulteqn}
\begin{split}
-\Delta_{p}u+(-\Delta_p)^s u=g\text{ in }\Om,\,u>0\text{ in }\Om,\,u=0\text{ in }\mathbb{R}^N\setminus\Om.
\end{split}
\end{equation}
Moreover, for every $\omega\Subset\Om$, there exists a constant $C(\omega)$ such that $u\geq C(\omega)>0$ in $\omega$.
\end{Lemma}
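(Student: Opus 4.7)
The plan is to proceed by direct minimization. Define the energy functional $J:W_0^{1,p}(\Omega)\to\mathbb{R}$ by
\[
J(v)=\frac{1}{p}\int_{\Omega}|\nabla v|^p\,dx+\frac{1}{p}\int_{\mathbb{R}^N}\int_{\mathbb{R}^N}\frac{|v(x)-v(y)|^p}{|x-y|^{N+ps}}\,dx\,dy-\int_{\Omega}g\,v\,dx.
\]
By Lemma \ref{locnon1} and Remark \ref{norm12}, the first two terms are equivalent to $\|v\|^p$ on $W_0^{1,p}(\Omega)$, while $g\in L^\infty(\Omega)$ combined with Hölder's inequality and Lemma \ref{embedding} bounds the last term by $C\|v\|_{L^p(\Omega)}\leq C\|v\|$. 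Hence $J$ is coercive, strictly convex (since $p>1$), and weakly lower semicontinuous on $W_0^{1,p}(\Omega)$ (the nonlocal seminorm by Fatou's lemma applied on $\mathbb{R}^N\times\mathbb{R}^N$). Consequently, $J$ admits a unique minimizer $u\in W_0^{1,p}(\Omega)$, which is the required weak solution of \eqref{auxresulteqn}. Uniqueness in $W_0^{1,p}(\Omega)$ already follows from strict convexity, but can also be deduced by testing the equation for the difference $u_1-u_2$ of two solutions against itself and applying Lemma \ref{AI} on both the local and nonlocal parts.

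To show $u\geq 0$, I would test the Euler--Lagrange equation with $-u^-\in W_0^{1,p}(\Omega)$. The local term yields $\int_{\Omega}|\nabla u^-|^p\,dx$, and for the nonlocal term one checks that
\[
|u(x)-u(y)|^{p-2}\bigl(u(x)-u(y)\bigr)\bigl(u^{-}(y)-u^{-}(x)\bigr)\geq |u^{-}(x)-u^{-}(y)|^p,
\]
which is a direct consequence of Lemma \ref{BrPrapp} applied with the increasing function $g(t)=-(-t)^+$. Together with $g\geq 0$ this forces $\|u^-\|=0$, so $u\geq 0$ in $\mathbb{R}^N$.

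For the $L^\infty$ bound I would use the Stampacchia truncation method. For each level $k>0$, test with $(u-k)^{+}\in W_0^{1,p}(\Omega)$. By Lemma \ref{BrPrapp} with $g(t)=(t-k)^{+}$, the nonlocal contribution is bounded below by $\iint |((u-k)^{+})(x)-((u-k)^{+})(y)|^p\,d\mu\geq 0$, so the entire left-hand side dominates $\int_{\Omega}|\nabla(u-k)^{+}|^p\,dx$. The resulting inequality is of the classical local Stampacchia type, and since $g\in L^\infty(\Omega)$ the standard iteration on the level sets $A_k=\{u>k\}$ yields $\|u\|_{L^\infty(\Omega)}\leq C(N,p,|\Omega|,\|g\|_{L^\infty(\Omega)})$.

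Finally, for the quantitative positivity, note that $u$ is nonnegative and nontrivial since $g\not\equiv 0$, and that $u$ is a supersolution of $-\Delta_p u+(-\Delta_p)^s u=0$ in $\Omega$. A strong minimum principle for the mixed local/nonlocal $p$-Laplace operator then gives $u>0$ in $\Omega$ and the desired $\omega$-uniform lower bound $u\geq C(\omega)>0$ on every $\omega\Subset\Omega$. The main obstacle I anticipate is precisely this last step: the Moser/Stampacchia bound and the variational existence are largely routine once one exploits the sign of the nonlocal term via Lemma \ref{BrPrapp}, but the quantitative interior positivity relies on a strong minimum principle tailored to the mixed operator, for which I would appeal to the existing theory (cf. the works cited in the introduction on maximum principles for mixed local--nonlocal operators).
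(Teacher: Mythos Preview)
Your proposal is correct and follows essentially the same route as the paper: direct minimization of the energy for existence, monotonicity/strict convexity for uniqueness, testing with the negative part for nonnegativity, Stampacchia truncation for the $L^\infty$ bound (the paper handles the sign of the nonlocal term by an explicit case split rather than via Lemma~\ref{BrPrapp}, but this is a cosmetic difference). For the step you flag as the main obstacle, the paper simply invokes \cite[Theorem~8.2]{GK} (the weak Harnack/strong minimum principle for the mixed operator) to obtain $u\geq C(\omega)>0$ on every $\omega\Subset\Omega$.
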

\begin{proof}
\textbf{Existence:} We define the energy functional $J:W_0^{1,p}(\Om)\to\mathbb{R}$ by
$$
J(v):=\frac{1}{p}\int_{\Omega}|\nabla v|^p\,dx+\frac{1}{p}\int\limits_{\mathbb{R}^N}\int\limits_{\mathbb{R}^N}\frac{|v(x)-v(y)|^p}{|x-y|^{N+ps}}\,dx\,dy-\int_{\Om}gv\,dx.
$$
\begin{itemize}
\item \textbf{Coercive:} We observe that $J$ is coercive. Indeed, by Lemma \ref{embedding} and using the fact that $g\in L^\infty(\Om)$, we have
\begin{equation}\label{coercive}
\begin{split}
J(v)&\geq\frac{\|v\|^p}{p}-C|\Om|^\frac{p-1}{p}\|g\|_{L^\infty(\Om)}\|v\|,
\end{split}
\end{equation}
where $C>0$ is the Sobolev constant. Since $p>1$, from \eqref{coercive}, we get $J$ is coercive.

\item \textbf{Weak lower semicontinuity:} Firstly, we observe that $J$ is convex. To this end, we write $J=J_1+J_2+J_3$, where for $i=1,2,3$ we define $J_i:W_0^{1,p}(\Om)\to\mathbb{R}$ as follows: 
$$
J_1(v):=\frac{1}{p}\int_{\Om}|\nabla v|^p\,dx,
$$

$$
J_2(v):=\frac{1}{p}\int\limits_{\mathbb{R}^N}\int\limits_{\mathbb{R}^N}\frac{|v(x)-v(y)|^p}{|x-y|^{N+ps}}\,dx\,dy,
$$
and

$$
J_3(v):=-\int_{\Om}gv\,dx.
$$
Since $p>1$, using the convexity of $|\cdot|^p$, we obtain that both the functional $J_1$ and $J_2$ are convex. It directly follows that $J_3$ is the linear functional. As a consequence, the functional $J$ become convex and moreover $J$ is a $C^1$ functional. Thus $J$ is weakly lower semicontinuous.
\end{itemize}
Thus the coercive, weak lower semicontinuty and $C^1$ property of $J$ asserts that $J$ has a minimizer, say $u\in W_0^{1,p}(\Omega)$ which solves the following problem
\begin{equation}\label{auxeqn}
-\Delta_p u+(-\Delta_p)^s u=g\text{ in }\Om.
\end{equation}
\textbf{Uniqueness:} Let $u_1,u_2\in W_0^{1,p}(\Om)$ solves the problem \eqref{auxeqn}. Thus, for every $\phi\in W_0^{1,p}(\Om)$, we have
\begin{equation}\label{auxeqn1}
\begin{split}
\int_{\Omega}|\nabla u_1|^{p-2}\nabla u_1\nabla\phi\,dx+\int\limits_{\mathbb{R}^N}\int\limits_{\mathbb{R}^N}\mathcal{A}\big(u_1(x,y)\big)\big(\phi(x)-\phi(y)\big)\,d\mu=\int_{\Omega}g\phi\,dx,
\end{split}
\end{equation}
and
\begin{equation}\label{auxeqn2}
\begin{split}
\int_{\Omega}|\nabla u_2|^{p-2}\nabla u_2\nabla\phi\,dx+\int\limits_{\mathbb{R}^N}\int\limits_{\mathbb{R}^N}\mathcal{A}\big(u_2(x,y)\big)\big(\phi(x)-\phi(y)\big)\,d\mu=\int_{\Omega}g\phi\,dx.
\end{split}
\end{equation}
Choosing $\phi=u_1-u_2$ and then subtracting \eqref{auxeqn1} and \eqref{auxeqn2}, we arrive at
\begin{equation}\label{auxeqn3}
\begin{split}
&\int_{\Omega}\left(|\nabla u_1|^{p-2}\nabla u_1-|\nabla u_2|^{p-2}\nabla u_2\right)\nabla(u_1-u_2)\,dx\\
&+\int\limits_{\mathbb{R}^N}\int\limits_{\mathbb{R}^N}\left(\mathcal{A}\big(u_1(x,y)\big)-\mathcal{A}\big(u_2(x,y)\big)\right)\big((u_1-u_2)(x)-(u_1-u_2)(y)\big)\,d\mu=0.
\end{split}
\end{equation}
Applying Lemma \ref{AI}, both the terms in the above estimate become nonnegative and hence we obtain from \eqref{auxeqn3},
\begin{equation}\label{auxeqn4}
\begin{split}
&\int_{\Omega}\left(|\nabla u_1|^{p-2}\nabla u_1-|\nabla u_2|^{p-2}\nabla u_2\right)\nabla(u_1-u_2)\,dx=0.
\end{split}
\end{equation}
Again, applying Lemma \ref{AI} in \eqref{auxeqn4}, we obtain $u_1=u_2$ in $\Om$. Therefore the solution of \eqref{auxeqn} is unique.

\noindent
\textbf{Boundedness:} In order to prove the boundedness, let
$$
A(k):=\{x\in\Omega:u(x)\geq k\}\,\,\text{for any}\,\, k\geq 1.
$$
Choosing $\phi_k:=(u-k)^+=\max\{u-k,0\}$ as a test function in \eqref{auxeqn}, we have
\begin{equation}\label{tstbd1}
\begin{split}
\int_{\Omega}|\nabla u|^{p-2}\nabla u\nabla\phi_k\,dx+\int\limits_{\mathbb{R}^N}\int\limits_{\mathbb{R}^N}\mathcal{A}\big(u(x,y)\big)\big(\phi_k(x)-\phi_k(y)\big)\,d\mu=\int_{\Omega}g\phi_k\,dx.
\end{split}
\end{equation}
First we estimate the nonlocal integral in \eqref{tstbd1}. To this end, we observe that
\begin{equation}\label{nonlosign}
\begin{split}
&\mathcal{A}\big(u(x,y)\big)\big(\phi_k(x)-\phi_k(y)\big)\\
&=|u(x)-u(y)|^{p-2}\big(u(x)-u(y)\big)\big((u(x)-k)^+-(u(y)-k)^+\big)\\&=
\begin{cases}
|u(x)-u(y)|^p,\text{ if }u(x)>k,\,u(y)>k,\\
(u(x)-u(y))^{p-1}\big(u(x)-k\big),\text{ if }u(x)>k\geq u(y),\\
\big(u(y)-u(x)\big)^{p-1}\big(u(y)-k\big),\text{ if }u(y)>k\geq u(x),\\
0,\text{ if }u(x)\leq k,\,u(y)\leq k.
\end{cases}\\
&\geq 0.
\end{split}
\end{equation}
Therefore, using \eqref{nonlosign} in \eqref{tstbd1} along with the continuity of the mapping $W_0^{1,p}(\Om)\hookrightarrow L^l(\Omega)$ for some $l>p$, from Lemma \ref{embedding}
\begin{multline}
\label{tstbd2}
\int_{\Omega}|\nabla\phi_k|^p\,dx=\int_{\Omega}|\nabla u|^{p-2}\nabla u\nabla\phi_k\,dx\leq\int_{\Omega}g\phi_k\,dx
\leq \|g\|_{L^{\infty}(\Om)}\int_{A(k)}(u-k)\,dx\\
\leq C_0\|g\|_{L^{\infty}(\Om)}|A(k)|^\frac{l-1}{l}\left(\int_{\Omega}|\nabla\phi_k|^p\,dx\right)^\frac{1}{p},
\end{multline}
where $C_0$ is the Sobolev constant. Hence, we have
\begin{equation}\label{new}
    \int_{\Omega}|\nabla\phi_k|^p\,dx \leq C|A(k)|^\frac{p(l-1)}{l(p-1)},
\end{equation}
for some positive constant $C=C(C_0,\|g\|_{L^{\infty}(\Om)})$. Now choose $h$ such that $1\leq k<h$. Then $u(x)-k \geq (h-k)$ on $A(h)$ and $A(h)\subset A(k)$. Noting this fact along with \eqref{new}, we get
\begin{multline*}
(h-k)^p|A(h)|^\frac{p}{l}\leq\left(\int_{A(h)}\big(u(x)-k\big)^l\,dx\right)^\frac{p}{l}\leq\left(\int_{A(k)}\big(u(x)-k\big)^l~dx\right)^\frac{p}{l}\\
\leq C_0\int_{\Omega}|\nabla\phi_k|^p~dx \leq C|A(k)|^\frac{p(l-1)}{l(p-1)},
\end{multline*}
for some positive constant $C=C(C_0,\|g\|_{L^{\infty}(\Om)})$. Therefore, we have
$$
|A(h)|\leq\frac{C}{(h-k)^l}|A(k)|^\frac{l-1}{p-1}.
$$
We observe that $\frac{l-1}{p-1}>1$. Thus using \cite[Lemma B.1]{Stam} we obtain
$$
\|u\|_{L^\infty(\Omega)}\leq C,
$$
for some positive constant $C=C(C_0,\|g\|_{L^{\infty}(\Om)})$. Hence, $u\in L^\infty(\Om)$.\\
\textbf{Positivity:} Choosing $u_-:=\min\{u,0\}$ as a test function in \eqref{auxeqn} and using $g\geq 0$, we have
\begin{equation}\label{posi}
\begin{split}
\int_{\Omega}|\nabla u_-|^p\,dx+\int\limits_{\mathbb{R}^N}\int\limits_{\mathbb{R}^N}\mathcal{A}\big(u(x,y)\big)\big(u_-(x)-u_-(y)\big)\,d\mu&=\int_{\Omega}gu_-\,dx\leq 0,
\end{split}
\end{equation}
where $\mathcal{A}\big(u(x,y)\big)=|u(x)-u(y)|^{p-2}\big(u(x)-u(y)\big)$.
We write,
$$
\mathbb{R}^N\times\mathbb{R}^N=\cup_{i=1}^{4}S_i,
$$
where
\begin{equation*}
\begin{split}
S_1&=\big\{(x,y)\in\mathbb{R}^N\times\mathbb{R}^N:u(x)\geq 0, u(y)\geq 0\big\},\\
S_2&=\big\{(x,y)\in\mathbb{R}^N\times\mathbb{R}^N:u(x)\geq 0, u(y)<0\big\},\\
S_3&=\big\{(x,y)\in\mathbb{R}^N\times\mathbb{R}^N:u(x)<0, u(y)\geq 0\big\},\\
S_4&=\big\{(x,y)\in\mathbb{R}^N\times\mathbb{R}^N:u(x)<0, u(y)<0\big\}.
\end{split}
\end{equation*}
For $(x,y)\in S_1$, we have $u_-(x)=u_-(y)=0$. If $(x,y)\in S_2$, then $u_-(x)=0$, $u_-(y)=u(y)$, which gives $u(x)-u(y)\geq u_-(x)-u_-(y)>0$. When $(x,y)\in S_3$, we have $u_-(x)=u(x),u_-(y)=0$. Hence, $u(x)-u(y)\leq u_-(x)-u_-(y)<0$. In case of $(x,y)\in S_4$, we have $u_-(x)=u(x)$ and $u_-(y)=u(y)$.\\
Therefore, for any $S_i$ with $i=1,\ldots,4$, if $(x,y)\in S_i$, then we have
\begin{equation}\label{pos}
|u(x)-u(y)|^{p-2}\big(u(x)-u(y)\big)\big(u_-(x)-u_-(y)\big)\geq 0.
\end{equation}
Using \eqref{pos} in \eqref{posi} we obtain
\begin{equation*}
\int_{\Omega}|\nabla u_-|^p\,dx=0.
\end{equation*}
Hence, $u\geq 0$ in $\Om$. Since $u=0$ in $\mathbb{R}^N\setminus\Om$, the function $u$ is nonnegative in $\mathbb{R}^N$ and from the above step, $u\in L^\infty(\mathbb{R}^N)$. As $g\not\equiv 0$, we have $u\not\equiv 0$ in $\Omega$. Thus, using \cite[Theorem $8.2$]{GK} for every $\omega\Subset\Omega$, there exists a constant $C(\omega)>0$ such that $u\geq C(\omega)>0$ in $\Omega$. Hence $u>0$ in $\Omega$.
\end{proof}

Next, we obtain the following existence and further qualitative properties of solutions for the problem $\mathcal{(A)}$.
\begin{Lemma}\label{approx}
For every $n\in\mathbb{N}$, there exists a unique positive solution $u_n\in W_{0}^{1,p}(\Omega)\cap L^{\infty}(\Omega)$ of $(\mathcal{A})$. Moreover, $u_{n+1}\geq u_n$ in $\Omega$, for every $n\in\mathbb{N}$. Furthermore, for every $n\in\mathbb{N}$ and every $\omega\Subset\Omega$, there exists a constant $C(\omega)>0$ (independent of $n$) such that $u_n\geq C(\omega)>0$ in $\omega$.
\end{Lemma}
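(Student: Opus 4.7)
The plan has four ingredients, modeled on the proof of Lemma \ref{auxresult}: existence and uniqueness from a strictly convex minimization, an $L^\infty$ bound via Stampacchia iteration, monotonicity by weak comparison, and uniform positivity from the strong minimum principle of \cite{GK}.

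For each fixed $n$, I would minimize on $W_0^{1,p}(\Omega)$ the functional
\[
J_n(u):=\frac{1}{p}\int_\Om|\nabla u|^p\,dx+\frac{1}{p}\int\limits_{\mathbb{R}^N}\int\limits_{\mathbb{R}^N}\frac{|u(x)-u(y)|^p}{|x-y|^{N+ps}}\,dx\,dy-\int_\Omega F_n(x,u)\,dx,
\]
where $F_n(x,t):=f_n(x)\int_0^t(s^++1/n)^{-\delta}\,ds$. Because $\partial_t F_n(x,t)=f_n(x)(t^++1/n)^{-\delta}$ is uniformly bounded by $n^{1+\delta}$, the perturbation $u\mapsto\int_\Omega F_n(x,u)\,dx$ is of class $C^1$, sequentially weakly continuous on $W_0^{1,p}(\Omega)$ (via the compact embedding of Lemma \ref{embedding} and dominated convergence), and dominated by $C\|u\|$; so the coercivity and weak lower semicontinuity step of Lemma \ref{auxresult} goes through and produces a minimizer. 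Since $t\mapsto(t^++1/n)^{-\delta}$ is non-increasing, $F_n$ is concave in $t$ and $-\int F_n$ is convex, so combined with the strict convexity of $u\mapsto\int_\Om|\nabla u|^p\,dx$ on $W_0^{1,p}(\Omega)$ the functional $J_n$ is strictly convex and its minimizer $u_n$ is unique. Its Euler--Lagrange identity is exactly $(\mathcal{A})$. Testing with $u_{n,-}:=\min\{u_n,0\}$ and repeating the positivity step of Lemma \ref{auxresult} (the right-hand side of $(\mathcal{A})$ is nonnegative) gives $u_n\geq 0$; and, since that right-hand side is bounded by $n^{1+\delta}$, the Stampacchia iteration used there applies verbatim to yield $u_n\in L^\infty(\Omega)$.

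The central new step is the monotonicity $u_{n+1}\geq u_n$. I would subtract the equations satisfied by $u_n$ and $u_{n+1}$ and test against $w:=(u_n-u_{n+1})^+\in W_0^{1,p}(\Omega)$. On $\{u_n>u_{n+1}\}$ both functions are nonnegative with $u_n^+\geq u_{n+1}^+$, and since $f_n\leq f_{n+1}$ and $1/n>1/(n+1)$ one obtains
\[
\frac{f_n(x)}{(u_n^++1/n)^\delta}\leq\frac{f_{n+1}(x)}{(u_{n+1}^++1/(n+1))^\delta}\qquad\text{on }\{u_n>u_{n+1}\},
\]
so the right-hand side of the tested difference is nonpositive. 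The local part of the left-hand side is nonnegative by Lemma \ref{AI}; the nonlocal part, written as $\iint[\mathcal{A}(u_n(x,y))-\mathcal{A}(u_{n+1}(x,y))](w(x)-w(y))\,d\mu$, is nonnegative by a case-by-case sign analysis in the spirit of \eqref{nonlosign}, splitting $\mathbb{R}^N\times\mathbb{R}^N$ according to whether $w(x)$ and $w(y)$ vanish. These two facts force $\nabla w\equiv 0$ and hence $w\equiv 0$ by Poincar\'e. I expect this sign-check on the nonlocal difference to be the main obstacle: it is routine but requires carefully combining the monotonicity of $t\mapsto|t|^{p-2}t$ with the sign relations between $u_n(x)-u_{n+1}(x)$ and $u_n(y)-u_{n+1}(y)$.

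For the uniform lower bound, choose $n_0$ large enough that $f_{n_0}\not\equiv 0$. Then $u_{n_0}\in W_0^{1,p}(\Omega)\cap L^\infty(\Omega)$ is nonnegative and not identically zero, so by \cite[Theorem 8.2]{GK} (the strong minimum principle invoked at the end of Lemma \ref{auxresult}) there is $C(\omega)>0$ with $u_{n_0}\geq C(\omega)$ on any $\omega\Subset\Omega$. Monotonicity then yields $u_n\geq u_{n_0}\geq C(\omega)$ for all $n\geq n_0$, and the finitely many remaining indices $n<n_0$ (each with $u_n$ pointwise positive by the same minimum principle whenever nontrivial) are handled by shrinking $C(\omega)$ once more.
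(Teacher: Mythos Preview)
Your argument is correct, and the monotonicity and uniform-positivity steps match the paper's proof essentially line for line (the paper cites \cite[Lemma~9]{Ling} for the nonlocal sign check you anticipate doing by hand, and it takes $n_0=1$ directly since $f_1=\min\{f,1\}\not\equiv 0$ whenever $f\not\equiv 0$, so your detour through a possibly larger $n_0$ is unnecessary).

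Where you diverge is in existence and uniqueness. The paper does \emph{not} minimize $J_n$; instead it freezes the right-hand side, applies Lemma~\ref{auxresult} to the linear-in-$u$ problem $-\Delta_p v+(-\Delta_p)^s v=f_n(h^++1/n)^{-\delta}$, and obtains $u_n$ as a Schauder fixed point of $h\mapsto v$ on $L^p(\Omega)$. Uniqueness is then recovered by the same comparison argument used for monotonicity. Your route---direct minimization of the nonlinear functional, with uniqueness read off from strict convexity---is shorter and avoids the fixed-point machinery; in fact the paper later (Lemma~\ref{minprop}) re-derives $u_n$ as the minimizer of an equivalent functional $I_n$, so your approach effectively anticipates that step. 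The trade-off is that the paper's fixed-point argument separates the two analytic ingredients cleanly (solvability of the frozen problem via Lemma~\ref{auxresult}, then compactness), whereas your variational argument packages them together but requires checking the concavity of $F_n$ and the $C^1$ regularity of the perturbation explicitly.
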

\begin{proof}
\textbf{Existence:} Let $n\in\mathbb{N}$ be fixed. Then, by Lemma \ref{auxresult}, for every $h\in L^p(\Omega)$, there exists a unique $v\in W_0^{1,p}(\Omega)\cap L^\infty(\Om)$ such that
\begin{equation}\label{approxfixed}
\begin{split}
-\Delta_p v+(-\Delta)_p^{s}v=\frac{f_n}{\big(h^{+}+\frac{1}{n}\big)^\delta}\text{ in }\Omega,\,v>0\text{ in }\Omega,\,v=0\text{ in }\mathbb{R}^N\setminus\Om.
\end{split}
\end{equation}
Therefore we can define the operator $T:L^p(\Omega)\to L^p(\Omega)$ by $T(h)=v$, where $v$ solves \eqref{approxfixed}. Choosing $v$ as a test function in \eqref{approxfixed}, using Lemma \ref{embedding}, we obtain
\begin{align*}
\int_{\Omega}|\nabla v|^p\,dx\leq\int_{\Omega}n^{\delta+1}v\,dx\leq C n^{\delta+1}|\Omega|^\frac{p-1}{p}\Big(\int_{\Omega}|\nabla v|^p\,dx\Big)^\frac{1}{p},
\end{align*}
where $C>0$ is the Sobolev constant. This gives
$$
\Big(\int_{\Omega}|\nabla v|^p\,dx\Big)^\frac{1}{p}\leq C^\frac{1}{p-1} n^\frac{\delta+1}{p-1}|\Omega|^\frac{1}{p}.
$$
Keeping in mind Lemma \ref{embedding} and the above estimate, arguing exactly as in the proof of \cite[Proposition $2.3$]{Caninononloc}, it follows that the mapping $T$ is continuous and compact. Hence, by the Schauder fixed point theorem, there exists a fixed point of $T,$ say $u_n\in W_0^{1,p}(\Omega)\cap L^\infty(\Om)$. Thus, $u_n$ solves the problem $\mathcal{(A)}$. Moreover, by Lemma \ref{auxresult}, we have $u_n>0$ in $\Omega$ such that, for every $\omega\Subset$ there exists a positive constant $C(\omega)$ satisfying $u_n\geq C(\omega)>0$ in $\omega$.

\noindent
\textbf{Monotonicity and uniqueness:} Since $u_n$ and $u_{n+1}$ are positive solutions of the problem $(\mathcal{A})$, for every $\phi\in W_0^{1,p}(\Omega)$, we have
\begin{equation}\label{monpre1}
\int_{\Omega}|\nabla u_n|^{p-2}\nabla u_n\nabla\phi\,dx+\int\limits_{\mathbb{R}^N}\int\limits_{\mathbb{R}^N}\mathcal{A}\big(u_n(x,y)\big)\big(\phi(x)-\phi(y)\big)\,d\mu=\int_{\Omega}\frac{f_n(x)}{\big(u_n+\frac{1}{n}\big)^\delta}\phi\,dx,
\end{equation}

\begin{equation}\label{monpre2}
\int_{\Omega}|\nabla u_{n+1}|^{p-2}\nabla u_{n+1}\nabla\phi\,dx+\int\limits_{\mathbb{R}^N}\int\limits_{\mathbb{R}^N}\mathcal{A}\big(u_{n+1}(x,y)\big)\big(\phi(x)-\phi(y)\big)\,d\mu=\int_{\Omega}\frac{f_{n+1}(x)}{\big(u_{n+1}+\frac{1}{n+1}\big)^\delta}\phi\,dx.
\end{equation}
Choosing $\phi=(u_{n}-u_{n+1})^+$ as a test function in \eqref{monpre1} and \eqref{monpre2}, we have
\begin{multline}\label{mon1}
\int_{\Omega}|\nabla u_n|^{p-2}\nabla u_n\nabla(u_n-u_{n+1})^+\,dx\\
\quad+\int\limits_{\mathbb{R}^N}\int\limits_{\mathbb{R}^N}\mathcal{A}\big(u_n(x,y)\big)\Big(\big(u_n-u_{n+1}\big)^+(x)-\big(u_n-u_{n+1}\big)^+(y)\Big)\,d\mu\\
\quad\quad=\int_{\Omega}\frac{f_n(x)}{\big(u_n+\frac{1}{n}\big)^\delta}\big(u_n-u_{n+1}\big)^+(x)\,dx,
\end{multline}
and
\begin{multline}\label{mon2}
\int_{\Omega}|\nabla u_{n+1}|^{p-2}\nabla u_{n+1}\nabla(u_n-u_{n+1})^+\,dx\\
\quad+\int\limits_{\mathbb{R}^N}\int\limits_{\mathbb{R}^N}\mathcal{A}\big(u_{n+1}(x,y)\big)\Big(\big(u_n-u_{n+1}\big)^+(x)-\big(u_n-u_{n+1}\big)^+(y)\Big)\,d\mu\\
\quad\quad=\int_{\Omega}\frac{f_{n+1}(x)}{\big(u_{n+1}+\frac{1}{n+1}\big)^\delta}\big(u_n-u_{n+1}\big)^+(x)\,dx,
\end{multline}
respectively. Noting the fact $f_{n}(x) \leq f_{n+1}(x)$ for $x\in\Om$, we have
\begin{equation}\label{monrhs}
\int_{\Omega}\left\{\frac{f_{n}(x)}{\big(u_{n}+\frac{1}{n}\big)^\delta}-\frac{f_{n+1}(x)}{\big(u_{n+1}+\frac{1}{n+1}\big)^\delta}\right\}\big(u_n-u_{n+1}\big)^+(x)\,dx\leq 0.
\end{equation}
Subtracting \eqref{mon1} with \eqref{mon2} and using the fact \eqref{monrhs}, we get
\begin{equation}\label{monest}
\begin{split}
&\int_{\Omega}\big(|\nabla u_{n}|^{p-2}\nabla u_{n}-|\nabla u_{n+1}|^{p-2}\nabla u_{n+1}\big)\nabla(u_n-u_{n+1})^+\,dx\\
&\quad+\int\limits_{\mathbb{R}^N}\int\limits_{\mathbb{R}^N}\Big(\mathcal{A}\big(u_n(x,y)\big)-\mathcal{A}\big(u_{n+1}(x,y)\big)\Big)\Big(\big(u_n-u_{n+1}\big)^+(x)-\big(u_n-u_{n+1}\big)^+(y)\Big)\,d\mu\leq 0.
\end{split}
\end{equation}
Following the same arguments from the proof of \cite[Lemma $9$]{Ling}, we obtain
\begin{equation}\label{monnon}
\int\limits_{\mathbb{R}^N}\int\limits_{\mathbb{R}^N}\Big(\mathcal{A}\big(u_n(x,y)\big)-\mathcal{A}\big(u_{n+1}(x,y)\big)\Big)\Big(\big(u_n-u_{n+1}\big)^+(x)-\big(u_n-u_{n+1}\big)^+(y)\Big)\,d\mu\geq 0.
\end{equation}
Hence, applying \eqref{monnon} in \eqref{monest} we obtain
\begin{equation}\label{monfinal}
\int_{\Omega}\big(|\nabla u_{n}|^{p-2}\nabla u_{n}-|\nabla u_{n+1}|^{p-2}\nabla u_{n+1}\big)\nabla(u_n-u_{n+1})^+\,dx\leq 0.
\end{equation}
Then, using Lemma \ref{AI} we obtain $u_{n+1}\geq u_n$ in $\Omega.$ Uniqueness follows similarly.

\noindent
\textbf{Uniform Positivity:} By Lemma \ref{auxresult} for every $\omega\Subset\Omega$, there exists a constant $C(\omega)>0$ such that $u_1\geq C(\omega)>0$ in $\omega$. Again, by the monotonicity we have $u_n\geq u_1$ in $\Omega$ for every $n\in\mathbb{N}$. Hence, for every $\omega\Subset\Omega$ and every $n\in\mathbb{N}$,
$$
u_n(x)\geq C(\omega)>0,\text{ for }x\in\omega,
$$
where $C(\omega)>0$ is a constant independent of $n$.
\end{proof}
\begin{Remark}\label{rmkapprox}
By Lemma \ref{approx}, since $\{u_n\}$ is monotone, we can define the pointwise limit of $u_n$ in $\Om$, say by $u$. Hence, $u\geq u_n$ in $\mathbb{R}^N$, for every $n\in\mathbb{N}$. Below, we prove that $u$ is our required solution for the problem $(\mathcal{S})$.
\end{Remark}

Next, we obtain some boundedness estimates (Lemma \ref{unibddless}-\ref{unibddreg}) for the sequence of positive solutions $\left\{u_n\right\}$ of the problem $\mathcal{(A)}$ given by Lemma \ref{approx}. These estimates are important to deduce the existence and regularity results.

\begin{Lemma}\label{unibddless}
Suppose $0<\delta<1$ and $f\in L^m(\Om)\setminus\{0\}$ be nonnegative such that $m=\big(\frac{p^{*}}{1-\delta}\big)'$ if $1<p<N$, $m>1$ if $p=N$ and $m=1$ if $p>N$. Then, the sequence $\{u_n\}$, where $u_n$, $n\in\mathbb{N}$ are solutions of the approximate problem $(\mathcal{A})$, is uniformly bounded in $W_0^{1,p}(\Om)$.
\end{Lemma}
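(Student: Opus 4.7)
The plan is to test the approximate equation against $u_n$ itself and absorb the right-hand side into the left using Hölder's inequality and the Sobolev embedding, exploiting the fact that $1-\delta < 1 < p$.

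First, I would use $u_n \in W_0^{1,p}(\Om)$ as a test function in the weak formulation of $(\mathcal{A})$. The nonlocal bilinear form $\int\int \mathcal{A}(u_n(x,y))(u_n(x)-u_n(y))\,d\mu$ equals the Gagliardo seminorm of $u_n$ to the $p$, hence is nonnegative, so it may simply be dropped (we are working with the gradient norm \eqref{equinorm} in this section). This gives
\begin{equation*}
\int_{\Om}|\nabla u_n|^p\,dx \le \int_{\Om}\frac{f_n(x)}{(u_n + \tfrac{1}{n})^\delta}\, u_n\,dx.
\end{equation*}
Since $u_n \ge 0$ and $\delta > 0$, we have $u_n/(u_n+\tfrac{1}{n})^\delta \le u_n^{1-\delta}$, and since $f_n \le f$ pointwise, the right-hand side is at most $\int_\Om f\, u_n^{1-\delta}\,dx$.

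Next, I would apply Hölder's inequality with exponents $m$ and $m'$:
\begin{equation*}
\int_\Om f\, u_n^{1-\delta}\,dx \le \|f\|_{L^m(\Om)} \left(\int_\Om u_n^{(1-\delta)m'}\,dx\right)^{1/m'}.
\end{equation*}
I need to check case by case that $(1-\delta)m'$ lies within the Sobolev embedding range of Lemma \ref{embedding}. For $1<p<N$, the choice $m = (p^*/(1-\delta))'$ gives $(1-\delta)m' = p^*$ exactly, so $\|u_n\|_{L^{(1-\delta)m'}(\Om)} \le C\|u_n\|$ by Sobolev. For $p=N$, any $m>1$ makes $(1-\delta)m'$ a finite exponent and the continuous embedding $W_0^{1,N}(\Om) \hookrightarrow L^t(\Om)$ for all $t \in [1,\infty)$ handles it. For $p>N$, taking $m=1$ formally means $m'=\infty$, in which case I would instead estimate $\int_\Om f u_n^{1-\delta}\,dx \le \|u_n\|_{L^\infty(\Om)}^{1-\delta} \|f\|_{L^1(\Om)} \le C\|u_n\|^{1-\delta}\|f\|_{L^1(\Om)}$ using the embedding $W_0^{1,p}(\Om) \hookrightarrow L^\infty(\Om)$.

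Combining these estimates yields $\|u_n\|^p \le C\|f\|_{L^m(\Om)}\|u_n\|^{1-\delta}$, where $C$ depends only on $N,p,\delta,\Om$ through the Sobolev constant. Because $0<\delta<1<p$ forces $p - (1-\delta) > 0$, we may absorb the $\|u_n\|^{1-\delta}$ factor and conclude
\begin{equation*}
\|u_n\| \le \bigl(C\|f\|_{L^m(\Om)}\bigr)^{\frac{1}{p-1+\delta}},
\end{equation*}
which is independent of $n$. There is no real obstacle here beyond bookkeeping the three cases; the key arithmetic observation is that $m$ is tailored precisely so that the Hölder conjugate exponent lands exactly at (or below) the Sobolev embedding exponent, and that $1-\delta < p$ so that $\|u_n\|^{1-\delta}$ can be absorbed into $\|u_n\|^p$.
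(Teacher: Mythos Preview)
Your proof is correct and follows essentially the same route as the paper: test $(\mathcal{A})$ with $u_n$, drop the nonnegative nonlocal term, bound the right-hand side by $\int_\Om f\,u_n^{1-\delta}\,dx$, apply H\"older with exponents $m,m'$ so that $(1-\delta)m'$ matches the Sobolev embedding exponent, and absorb $\|u_n\|^{1-\delta}$ into $\|u_n\|^p$. The paper only spells out the case $1<p<N$ and declares the remaining cases analogous, whereas you treat all three explicitly, but the argument is otherwise identical.
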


\begin{proof}
Because $u_n$ are solutions of  $(\mathcal{A})$, choosing $u_n$ as a test function in $\mathcal{(A)}$, we get
$$
\|u_n\|^p\leq \int_{\Om}\frac{f_n(x)u_n}{\big(u_n+\frac{1}{n}\big)^\delta}\,dx.
$$
Let $1<p<N$, then noting that $f\in L^m(\Om)$ and $(1-\delta)m'=p^*$, along with Lemma \ref{embedding}
\begin{multline*}
\|u_n\|^p\leq \int_{\Om}f u_n^{1-\delta}\,dx\leq \|f\|_{L^m(\Om)}\left(\int_{\Om}u_n^{(1-\delta)m'}\right)^\frac{1}{m'}\\
=\|f\|_{L^m(\Om)}\left(\int_{\Om}u_n^{p^*}\,dx\right)^\frac{1-\delta}{p^*}
\leq C\|f\|_{L^m(\Om)}\|u_n\|^{1-\delta},
\end{multline*}
for some constant $C>0$, independent of $n$. Therefore, we have
$$
\|u_n\|\leq C.
$$
Hence, the sequence $\{u_n\}$ is uniformly bounded in $W_0^{1,p}(\Om)$. For $p\geq N$, the result follows similarly.
\end{proof}
In fact, the uniform boundedness of $\{u_n\}$ follows for any integrable function if the mixed Sobolev inequality \eqref{inequality2} holds. More precisely, we have the following result.

\begin{Lemma}\label{nsbdd}
Suppose $0<\delta<1$ and $f\in L^1(\Om)\setminus\{0\}$ be nonnegative. Then for any $1<p<\infty$, the sequence $\{u_n\}$, where $u_n$, $n\in\mathbb{N}$ are solutions of the approximate problem $(\mathcal{A})$, is uniformly bounded in $W_0^{1,p}(\Om)$, provided \eqref{inequality2} holds.
\end{Lemma}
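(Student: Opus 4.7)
The plan is to mirror the strategy of Lemma \ref{unibddless}, but replace the Hölder-type bound on $\int_\Om f u_n^{1-\delta}\,dx$ (which required $f\in L^m(\Om)$) with the assumed mixed Sobolev inequality \eqref{inequality2}, which is precisely the tool that makes merely integrable data sufficient. Throughout, the norm $\|\cdot\|$ is the mixed norm \eqref{norm}; by Remark \ref{norm12} this is equivalent on $W_0^{1,p}(\Om)$ to the gradient norm, so either can be used in estimates.

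First, I would take $u_n$ itself as a test function in the weak formulation of $(\mathcal{A})$. This is admissible because, by Lemma \ref{approx}, each $u_n$ belongs to $W_0^{1,p}(\Om)\cap L^{\infty}(\Om)$. Using $f_n\leq f$ pointwise and the elementary bound
$$\frac{u_n}{(u_n+\tfrac{1}{n})^{\delta}}\leq u_n^{1-\delta}\quad\text{in }\Om,$$
the test-function identity becomes the energy estimate
$$\|u_n\|^{p}\leq \int_{\Om} f\, u_n^{1-\delta}\,dx.$$

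Next, I would apply the hypothesis \eqref{inequality2} to $v=u_n\in W_0^{1,p}(\Om)$. Raising \eqref{inequality2} to the power $(1-\delta)/p$ and rearranging gives
$$\int_{\Om} f\, u_n^{1-\delta}\,dx \leq C^{-\frac{1-\delta}{p}}\,\|u_n\|^{\,1-\delta}.$$
Combining this with the preceding energy estimate yields
$$\|u_n\|^{p}\leq C^{-\frac{1-\delta}{p}}\,\|u_n\|^{\,1-\delta}.$$
Since $0<\delta<1<p$, the exponent $p-(1-\delta)$ is strictly positive, so dividing (or noting that either $\|u_n\|=0$ or the inequality gives a bound on $\|u_n\|^{p-(1-\delta)}$) produces a uniform bound on $\|u_n\|$ depending only on $C$, $p$, and $\delta$, but not on $n$.

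I do not expect a substantive obstacle: the argument is essentially a one-line interpolation between the $u_n$-tested energy identity and the assumed inequality \eqref{inequality2}. The only point that merits a moment's care is dimensional/normalization consistency, namely checking that the norm on the right-hand side of the energy estimate matches the norm appearing in \eqref{inequality2}; both are the full mixed norm on $W_0^{1,p}(\Om)$, so this is automatic.
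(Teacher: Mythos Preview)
Your proposal is correct and follows essentially the same approach as the paper: test $(\mathcal{A})$ with $u_n$, use $f_n\le f$ and $u_n/(u_n+\tfrac{1}{n})^\delta\le u_n^{1-\delta}$ to get the energy estimate, then apply \eqref{inequality2} to close the loop. The only cosmetic difference is that the paper works with the gradient norm \eqref{equinorm} (the Section~3 convention) and invokes Lemma~\ref{locnon1} to pass from the mixed right-hand side of \eqref{inequality2} back to the gradient norm, whereas you work directly with the mixed norm and appeal to the equivalence in Remark~\ref{norm12}; either way the conclusion is the same.
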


\begin{proof}
Choosing $u_n$ as a test function in $(\mathcal{A})$ and using the inequality \eqref{inequality2} and Lemma \ref{locnon1}, we have 
\begin{equation*}
\|u_n\|^p\leq \int_{\Om}\frac{f_n(x)u_n}{\big(u_n+\frac{1}{n}\big)^\delta}\,dx
\leq\int_{\Om}u_n^{1-\delta}f\,dx
\leq C\|u_n\|^{1-\delta},
\end{equation*}
for some constant $C>0$ (independent of $n$). Hence, the result follows.
\end{proof}

\begin{Lemma}\label{unibdequal}
Suppose $\delta=1$ and $f\in L^1(\Om)\setminus\{0\}$ is nonnegative in $\Om$. Then, for any $1<p<\infty$, the sequence $\{u_n\}$, where $u_n$, $n\in\mathbb{N}$ are solutions of the approximate problem $(\mathcal{A})$, is uniformly bounded in $W_0^{1,p}(\Om)$.
\end{Lemma}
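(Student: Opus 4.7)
\textbf{Plan of proof for Lemma \ref{unibdequal}.} The key simplification in the case $\delta = 1$ is that the singular nonlinearity $\frac{f_n(x)}{u_n + 1/n}$ can be paired with $u_n$ itself to produce a bounded integrand. My plan is to exploit this and test the approximating equation $(\mathcal{A})$ directly against $u_n \in W_0^{1,p}(\Om)$, which by Lemma \ref{approx} is an admissible test function.

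First, testing with $\phi = u_n$ yields
\begin{equation*}
\int_{\Om} |\nabla u_n|^p\,dx + \iint_{\mathbb{R}^N \times \mathbb{R}^N} \frac{|u_n(x) - u_n(y)|^p}{|x-y|^{N+ps}}\,dx\,dy \;=\; \int_{\Om} \frac{f_n(x)\, u_n}{u_n + \tfrac{1}{n}}\,dx.
\end{equation*}
The right-hand side is easily controlled: since $\frac{u_n}{u_n + 1/n} \le 1$ pointwise in $\Om$ and $f_n \le f$, we obtain
\begin{equation*}
\int_{\Om} \frac{f_n(x)\, u_n}{u_n + \tfrac{1}{n}}\,dx \;\le\; \int_{\Om} f_n\,dx \;\le\; \|f\|_{L^1(\Om)}.
\end{equation*}
Thus, using the mixed norm in \eqref{norm}, we conclude $\|u_n\|^p \le \|f\|_{L^1(\Om)}$, and by the norm equivalence noted in Remark \ref{norm12}, the sequence $\{u_n\}$ is uniformly bounded in $W_0^{1,p}(\Om)$ with respect to the gradient norm \eqref{equinorm} as well.

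There is essentially no obstacle here: the proof is one test-function computation. The only subtlety is the admissibility of the test $u_n$ (granted by Lemma \ref{approx}, since $u_n \in W_0^{1,p}(\Om) \cap L^\infty(\Om)$) and the observation that, unlike the case $0 < \delta < 1$ treated in Lemma \ref{unibddless} (where $f \in L^m$ with $m > 1$ is required) or the case $\delta > 1$ (which requires an entirely different test function and gives only local estimates), the case $\delta = 1$ permits the cancellation $\tfrac{u_n}{u_n + 1/n} \le 1$, rendering the right-hand side controlled by $\|f\|_{L^1(\Om)}$ regardless of the value of $p > 1$ or the integrability of $f$ beyond $L^1$.
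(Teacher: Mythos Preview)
Your proof is correct and is essentially identical to the paper's own argument: both test $(\mathcal{A})$ with $\phi = u_n$ and use $\frac{u_n}{u_n + 1/n} \le 1$ together with $f_n \le f$ to bound the right-hand side by $\|f\|_{L^1(\Om)}$. The only cosmetic difference is that the paper drops the (nonnegative) nonlocal term and works directly with the gradient norm \eqref{equinorm}, whereas you keep it and invoke the norm equivalence of Remark \ref{norm12}; either way the conclusion is the same.
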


\begin{proof}
Choosing $\phi=u_n$ as a test function in $(\mathcal{A})$ we obtain
\begin{equation}\label{}
\begin{split}
\|u_n\|^p&\leq\int_{\Om}\frac{f_n}{\big(u_n+\frac{1}{n}\big)}u_n\,dx\\
&\leq\|f\|_{L^1(\Om)}.
\end{split}
\end{equation}
Thus, $\{u_n\}$ is uniformly bounded in $W_0^{1,p}(\Om)$.
\end{proof}

\begin{Lemma}\label{unibddgrt}
Suppose $\delta>1$ and $f\in L^1(\Om)\setminus\{0\}$ is nonnegative in $\Om$. Then, for any $1<p<\infty$, the sequences $\left\{u_n^\frac{\delta+p-1}{p}\right\}$ and $\{u_n\}$, where $u_n$, $n\in\mathbb{N}$ are solutions of the approximate problem $(\mathcal{A})$, is uniformly bounded in $W^{1,p}_0(\Om)$ and $W_{\mathrm{loc}}^{1,p}(\Om)$ respectively.
\end{Lemma}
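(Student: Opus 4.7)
The plan is to mimic the proofs of Lemma \ref{unibddless}--\ref{unibdequal}, but with the test function $\phi = u_n^{\delta}$ instead of $u_n$; this shifts the singularity on the right-hand side and produces, via the chain rule and Lemma \ref{BrPrapp}, an energy controlled in terms of $u_n^{(\delta+p-1)/p}$. First I would verify admissibility: since $u_n \in W_0^{1,p}(\Omega)\cap L^\infty(\Omega)$ by Lemma \ref{approx} and $\delta > 1$, the map $t \mapsto t^\delta$ is Lipschitz on $[0,\|u_n\|_{L^\infty(\Omega)}]$, hence $u_n^\delta \in W_0^{1,p}(\Omega)$; by Lemma \ref{defineq} this also belongs to $W_0^{s,p}(\Omega)$, so it may be used as a test function in the weak formulation of $(\mathcal{A})$.

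Next, I would compute both sides. On the right-hand side,
\[
\int_\Omega \frac{f_n(x)\,u_n^\delta}{\big(u_n+\tfrac{1}{n}\big)^\delta}\,dx \;\leq\; \int_\Omega f_n\,dx \;\leq\; \|f\|_{L^1(\Omega)}.
\]
For the local part, the chain rule gives $\nabla(u_n^\delta) = \delta\, u_n^{\delta-1}\nabla u_n$, and a direct manipulation yields
\[
\int_\Omega |\nabla u_n|^{p-2}\nabla u_n\cdot\nabla (u_n^\delta)\,dx
= \frac{\delta\, p^p}{(\delta+p-1)^p}\int_\Omega \big|\nabla u_n^{(\delta+p-1)/p}\big|^p\,dx .
\]
For the nonlocal part, I would apply Lemma \ref{BrPrapp} with the increasing function $g(t)=t^\delta$ on $[0,\infty)$, for which $G(t)=\delta^{1/p}\,\tfrac{p}{\delta+p-1}\, t^{(\delta+p-1)/p}$, obtaining pointwise
\[
\mathcal{A}\big(u_n(x,y)\big)\,\big(u_n(x)^\delta - u_n(y)^\delta\big) \;\geq\; \frac{\delta\,p^p}{(\delta+p-1)^p}\,\big|u_n(x)^{(\delta+p-1)/p}-u_n(y)^{(\delta+p-1)/p}\big|^p .
\]
Integrating against $d\mu$ and combining with the local term gives a uniform bound
\[
\big\|u_n^{(\delta+p-1)/p}\big\|_{W_0^{1,p}(\Omega)}^p \;\leq\; C\,\|f\|_{L^1(\Omega)},
\]
which is the first conclusion.

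For the second conclusion, fix $\omega \Subset \Omega$. By Lemma \ref{approx} there exists $c(\omega)>0$ (independent of $n$) with $u_n \geq c(\omega)$ on $\omega$. Setting $v_n := u_n^{(\delta+p-1)/p}$, the chain rule gives
\[
|\nabla u_n|^p = \Big(\tfrac{p}{\delta+p-1}\Big)^p u_n^{-(\delta-1)}\,|\nabla v_n|^p
\quad \text{a.e.\ in }\Omega,
\]
so, since $\delta>1$ and $u_n\geq c(\omega)$ on $\omega$,
\[
\int_\omega |\nabla u_n|^p\,dx \;\leq\; C(\omega)\int_\omega |\nabla v_n|^p\,dx \;\leq\; C(\omega)\,\|v_n\|_{W_0^{1,p}(\Omega)}^p \;\leq\; C'(\omega).
\]
The $L^p(\omega)$ bound on $u_n$ follows from Lemma \ref{embedding}: $v_n$ is bounded in $L^p(\Omega)$, hence $u_n$ is bounded in $L^{\delta+p-1}(\Omega) \hookrightarrow L^p(\Omega)$ since $\Omega$ is bounded. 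Combining these gives $\{u_n\}$ uniformly bounded in $W^{1,p}(\omega)$ for every $\omega \Subset \Omega$.

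The main technical point I expect to need care with is the application of Lemma \ref{BrPrapp} in the nonlocal term: I must use the nonnegativity of $u_n$ in $\mathbb{R}^N$ (so that $g(t)=t^\delta$ is well defined and increasing on the range of $u_n$), and justify that the pointwise bound on the integrand transfers to the integral against $d\mu$ despite the singular kernel; the boundedness of $u_n$ together with $u_n^\delta \in W_0^{1,p}(\Omega)\subset W_0^{s,p}(\Omega)$ (via Lemma \ref{defineq}) make both sides finite, so the inequality integrates without issue.
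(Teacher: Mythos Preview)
Your proposal is correct and follows essentially the same approach as the paper: test with $u_n^\delta$, bound the right-hand side by $\|f\|_{L^1(\Omega)}$, use the chain rule on the local term, and then recover the $W^{1,p}_{\mathrm{loc}}$ bound on $u_n$ from the lower bound $u_n\geq c(\omega)$ together with the $L^p$ control on $u_n$ via $L^{\delta+p-1}\hookrightarrow L^p$. The only cosmetic difference is that the paper merely uses Lemma~\ref{BrPrapp} to assert nonnegativity of the nonlocal integrand and then drops that term, whereas you retain the sharper lower bound; either variant suffices since in Section~3 the norm on $W_0^{1,p}(\Omega)$ is the gradient norm~\eqref{equinorm}.
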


\begin{proof}
\textbf{Uniform boundedness in $W_0^{1,p}(\Om)$:} By Lemma \ref{approx} for every $n\in\mathbb{N}$, we have $u_n\in L^\infty(\Om)$. Therefore, since $\delta>1$, we choose $u_n^{\delta}\in W_0^{1,p}(\Om)$ as a test function in $(\mathcal{A})$ to obtain
\begin{equation}\label{deltabig1}
\begin{split}
\int_{\Om}|\nabla u_n|^{p-2}\nabla u_n\nabla u_n^{\delta}\,dx+\int\limits_{\mathbb{R}^N}\int\limits_{\mathbb{R}^N}\mathcal{A}\big(u_n(x,y)\big)\big((u_n(x)^\delta-u_n(y)^\delta\big)\,d\mu&=\int_{\Omega}\frac{f(x)}{\big(u_n+\frac{1}{n}\big)^\delta}u_n^\delta\,dx.
\end{split}
\end{equation}
Using Lemma \ref{BrPrapp}, for almost every $x,y\in\mathbb{R}^N$, we have
\begin{equation}\label{BrPrineq}
\mathcal{A}\big(u_n(x,y)\big)\big(u_n(x)^\delta-u_n(y)^\delta\big)=|u_n(x)-u_n(y)|^{p-2}\big(u_n(x)-u_n(y)\big)\big(u_n(x)^\delta-u_n(y)^\delta\big)\geq 0.
\end{equation}
Therefore, from \eqref{BrPrineq} and \eqref{deltabig1}, we obtain
\begin{align*}
\int_{\Om}\delta\Big(\frac{p}{\delta+p-1}\Big)^p\left|\nabla u_n^\frac{\delta+p-1}{p}\right|^p\,dx=\int_{\Om}|\nabla u_n|^{p-2}\nabla u_n\nabla u_n^{\delta}\,dx\leq\int_{\Omega}\frac{f(x)}{\big(u_n+\frac{1}{n}\big)^\delta}u_n^\delta\,dx\leq\|f\|_{L^1(\Om)},
\end{align*}
which gives
\begin{equation}\label{unibddpower}
\left\|u_n^\frac{\delta+p-1}{p}\right\|\leq C,
\end{equation}
for some constant $C>0$ independent of $n$. Thus, the sequence $\left\{u_n^\frac{\delta+p-1}{p}\right\}$ is uniformly bounded in $W_0^{1,p}(\Om)$.

\noindent
\textbf{Uniform boundedness in $W^{1,p}_{\mathrm{loc}}(\Om)$:}  Let $\omega\Subset\Om$, then by Lemma \ref{approx}, $u_n\geq C(\omega)>0$ in $\omega$ for every $n\in\mathbb{N}$, where $C(\omega)>0$ is a constant independent of $n$. Since, $\delta>1$, then
\begin{equation}\label{1}
\int_{\omega}|\nabla u|^p\,dx=\Big(\frac{p}{\delta+p-1}\Big)^p\int_{\omega}u_n^{1-\delta}\left|\nabla u_n^\frac{\delta+p-1}{p}\right|^p\,dx
\leq\Big(\frac{p}{\delta+p-1}\Big)^p C(\omega)^{1-\delta} C^p, 
\end{equation}
where the constant $C$ is given by \eqref{unibddpower}. Therefore, the sequence $\left\{\nabla u_n\right\}$ is uniformly bounded in $L^p_{\mathrm{loc}}(\Om)$. Again using \eqref{unibddpower} and H\"older's inequality, we obtain
\begin{equation}\label{2}
\int_{\Om}u_n^p\,dx\leq|\Om|^\frac{\delta-1}{\delta+p-1}\left(\int_{\Om}u_n^{\delta+p-1}\,dx\right)^\frac{p}{\delta+p-1}\leq C,
\end{equation}
for some constant $C>0$, independent of $n$. Hence, from \eqref{1} and \eqref{2}, it turns out that $\left\{u_n\right\}$ is uniformly bounded in $W^{1,p}_{\mathrm{loc}}(\Om)$.
\end{proof}

The following a priori estimate is crucial to prove our regularity result.
\begin{Lemma}\label{unibddreg}
Let $\delta>0$ and $f\in L^q(\Omega)\setminus\{0\}$ be nonnegative such that $q>\frac{p^{*}}{p^{*}-p}$ if $1<p<N$, $q>\frac{l}{l-p}$ for some $l>p$ if $p=N$ and $q=1$ if $p>N$. Then $\|u_n\|_{L^\infty(\Omega)}\leq C$, for some positive constant $C$ independent of $n$.
\end{Lemma}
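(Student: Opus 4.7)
The plan is to carry out a standard Stampacchia truncation argument on the level sets of $u_n$, with the key simplification coming from the fact that the singular right-hand side is tame once we truncate above height $1$, and from the fact that the nonlocal bilinear form interacts with $(u_n-k)^+$ with a favourable sign.

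For $k\geq 1$, I will test the equation $(\mathcal{A})$ satisfied by $u_n$ with $\phi=(u_n-k)^+ \in W_0^{1,p}(\Omega)$. On the superlevel set $A(k):=\{u_n\geq k\}$ we have $u_n+\tfrac{1}{n}\geq 1$, so $(u_n+\tfrac{1}{n})^{-\delta}\leq 1$ for every $\delta>0$; hence the singular term is absorbed and the right-hand side is bounded above by $\int_{A(k)} f\,(u_n-k)^+\,dx$. For the left-hand side, the local piece equals $\int_\Omega|\nabla(u_n-k)^+|^p\,dx$, and the nonlocal piece is nonnegative by exactly the case analysis of \eqref{nonlosign} (the integrand of the double integral is nonnegative pointwise in all four regions determined by the signs of $u_n(x)-k$ and $u_n(y)-k$). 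Discarding the nonlocal term therefore gives
\begin{equation*}
\int_\Omega|\nabla(u_n-k)^+|^p\,dx\;\leq\;\int_{A(k)} f\,(u_n-k)^+\,dx, \qquad k\geq 1.
\end{equation*}

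Now I would specialise to the case $1<p<N$ (the cases $p=N$ and $p>N$ are handled in parallel using the embeddings $W_0^{1,p}(\Omega)\hookrightarrow L^l(\Omega)$ and $W_0^{1,p}(\Omega)\hookrightarrow L^\infty(\Omega)$ from Lemma \ref{embedding}). Applying H\"older with exponents $(q,p^*,r)$ satisfying $\tfrac{1}{q}+\tfrac{1}{p^*}+\tfrac{1}{r}=1$ on the right and Sobolev on the left gives
\begin{equation*}
\|(u_n-k)^+\|_{L^{p^*}(\Omega)}\;\leq\;C\,\|f\|_{L^q(\Omega)}^{\frac{1}{p-1}}\,|A(k)|^{\frac{1}{p-1}\left(1-\frac{1}{q}-\frac{1}{p^*}\right)}.
\end{equation*}
For any $h>k\geq 1$, $(h-k)|A(h)|^{1/p^*}\leq \|(u_n-k)^+\|_{L^{p^*}(\Omega)}$, and therefore
\begin{equation*}
|A(h)|\;\leq\;\frac{C}{(h-k)^{p^*}}\,|A(k)|^{\beta},\qquad \beta=\frac{p^*}{p-1}\left(1-\frac{1}{q}-\frac{1}{p^*}\right).
\end{equation*}
A direct algebraic check shows that the hypothesis $q>\tfrac{p^*}{p^*-p}$ is equivalent to $\beta>1$, which is exactly what is required to invoke the Stampacchia lemma \cite[Lemma B.1]{Stam} and conclude that $u_n\leq C$ uniformly in $n$, with $C$ depending only on $p,N,q,\Omega$ and $\|f\|_{L^q(\Omega)}$.

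The main obstacle, such as it is, is being confident that the nonlocal term drops out with the right sign after truncation; this is not automatic for singular problems (where one sometimes needs to exploit the nonlocal term) but here the pointwise nonnegativity argument already used in Lemma \ref{auxresult} applies verbatim. The remaining work is bookkeeping: writing the parallel estimates in the borderline case $p=N$ (pick any $l>p$ and use $W_0^{1,p}\hookrightarrow L^l$ together with $q>\tfrac{l}{l-p}$ to keep the iteration exponent above $1$) and in the case $p>N$ (where Lemma \ref{embedding} gives $L^\infty$ control directly from the energy estimate, so $q=1$ suffices).
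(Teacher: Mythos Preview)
Your proposal is correct and follows essentially the same approach as the paper's proof: both test $(\mathcal{A})$ with $(u_n-k)^+$ for $k\geq 1$, discard the nonlocal term via the pointwise sign argument \eqref{nonlosign}, use that $(u_n+\tfrac{1}{n})^{-\delta}\leq 1$ on $A(k)$, and run Stampacchia's iteration on $|A(h)|$. The only cosmetic difference is that you apply a single three-term H\"older inequality with exponents $(q,p^*,r)$, whereas the paper first uses H\"older with $(p^{*'},p^*)$ plus Young's inequality and then a second H\"older to pass from $L^{p^{*'}}$ to $L^q$; both routes yield the identical iteration exponent $\beta=\frac{p^*}{p-1}\big(1-\tfrac{1}{q}-\tfrac{1}{p^*}\big)>1$.
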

\begin{proof}
We prove the result only for the case $1<p<N$, since the proof for $p\geq N$ is analogous. Let $k\geq 1$ and define $A(k)=\{x\in\Om: u_n(x)\geq k\}$. Choosing $\phi_k(x)=(u_n-k)^{+}\in W_0^{1,p}(\Om)$ as a test function in $(\mathcal{A})$, using H\"older's inequality with the exponents $p^{*'}, p^*$ and then by Young's inequality with exponents $p$ and $p'$ along with \eqref{nonlosign} and Lemma \ref{embedding} we obtain
\begin{multline}
\label{unibdd}
\int_{\Om}|\nabla\phi_k|^p\,dx\leq\int_{\Om}\frac{f_n}{\big(u_n+\frac{1}{n}\big)^\delta}\phi_k\,dx\leq\int_{A(k)}f(x)\phi_k\,dx\\
\leq\left(\int_{A(k)}f^{p^{*'}}\,dx\right)^\frac{1}{p^{*'}}\left(\int_{\Om}\phi_k^{p^*}\,dx\right)^\frac{1}{p^*}
\leq C\left(\int_{A(k)}f^{p^{*'}}\,dx\right)^\frac{1}{p^{*'}}\left(\int_{\Om}|\nabla\phi_k|^{p}\,dx\right)^\frac{1}{p}\\
\leq \epsilon\int_{\Om}|\nabla\phi_k|^p\,dx+C(\epsilon)\left(\int_{A(k)}f^{p^{*'}}\,dx\right)^\frac{p'}{p^{*'}}.
\end{multline}
Here, $C$ is the Sobolev constant and  $C(\epsilon)>0$ is some constant depending on $\epsilon\in(0,1)$. Note that $q>\frac{p^*}{p^*-p}$ gives $q>p^{*'}$. Therefore, fixing $\epsilon\in(0,1)$ and again using H\"older's inequality with exponents $\frac{q}{p^{*'}}$ and $\big(\frac{q}{p^{*'}}\big)'$, we obtain
\begin{align*}
\int_{\Om}|\nabla\phi_k|^p\,dx\leq C\left(\int_{A(k)}f^{p^{*'}}\,dx\right)^\frac{p'}{p^{*'}}\leq C\left(\int_{A(k)}f^q\,dx\right)^\frac{p'}{q}|A(k)|^{\frac{p'}{p^{*'}}\frac{1}{\big(\frac{q}{p^{*'}}\big)'}}.
\end{align*}
Let $h>0$ be such that $1\leq k<h$. Then, $A(h)\subset A(k)$ and for any $x\in A(h)$, we have $u_n(x)\geq h$. So, $u_n(x)-k\geq h-k$ in $A(h)$. 
Noting these facts, for some constant $C>0$ (independent of $n$), we have
\begin{multline*}
(h-k)^p|A(h)|^\frac{p}{p^*}\leq\left(\int_{A(h)}(u_n-k)^{p^*}\,dx\right)^\frac{p}{p^*}\leq\left(\int_{A(k)}(u_n-k)^{p^*}\,dx\right)^\frac{p}{p^*}\\
\leq C\int_{\Om}|\nabla\phi_k|^p\,dx\leq C\|f\|_{L^q(\Om)}^{p'}|A(k)|^{\frac{p'}{p^{*'}}\frac{1}{\big(\frac{q}{p^{*'}}\big)'}}.
\end{multline*}
Thus, for some constant $C>0$ (independent of $n$), we have
$$
|A(h)|\leq C\frac{\|f\|_{L^q(\Om)}^\frac{p^*}{p-1}}{(h-k)^{p^*}}|A(k)|^{\alpha},\,\,\text{where}\,\,\alpha={\frac{p^{*}p'}{pp^{*'}}\frac{1}{\big(\frac{q}{p^{*'}}\big)'}}.
$$
Due to the assumption, $q>\frac{p^*}{p^*-p}$, we have $\alpha>1$. Hence, by \cite[Lemma B.$1$]{Stam}, we have
$$
\|u_n\|_{L^\infty(\Om)}\leq C,
$$
for some positive constant $C>0$, independent of $n$.
\end{proof}

\section{Preliminaries for the uniqueness results}
Throughout this section, we assume that $\delta>0$ and $f\in L^t(\Om)\setminus\{0\}$ is nonnegative such that $t=(p^*)'$ if $1<p<N$, $t>1$ if $p=N$ and $t=1$ if $p>N$. Here, we obtain a comparison principle (Lemma \ref{cp}), that is crucial to obtain the uniqueness result. Firstly, we define the notion of weak subsolution and weak supersolution for the problem $(\mathcal{S})$.
\begin{Definition}\label{supsol} (Weak supersolution)
We say that $u\in W_{\mathrm{loc}}^{1,p}(\Omega)\cap L^{p-1}(\Om)$ is a weak supersolution of the problem $(\mathcal{S})$, if 
$$
u>0\text{ in }\Om,\,u=0 \text{ in }\mathbb{R}^N\setminus\Om \text{ and } \frac{f}{u^\delta}\in L^1_{\mathrm{loc}}(\Om),
$$
such that for every $\phi\in C_c^{1}(\Om)$, we have
\begin{equation}\label{supsolution} 
\int_{\Omega}|\nabla u|^{p-2}\nabla u\nabla\phi\,dx+\int\limits_{\mathbb{R}^N}\int\limits_{\mathbb{R}^N}\mathcal{A}\big(u(x,y)\big){\big(\phi(x)-\phi(y)\big)}\,d\mu\geq\int_{\Omega}\frac{f(x)}{u(x)^\delta}\phi(x)\,dx.
\end{equation}
\end{Definition}

\begin{Definition}\label{subsol} (Weak subsolution)
We say that $u\in W_{\mathrm{loc}}^{1,p}(\Omega)\cap L^{p-1}(\Om)$ is a weak subsolution of the problem $(\mathcal{S})$, if 
$$
u>0\text{ in }\Om,\,u=0 \text{ in }\mathbb{R}^N\setminus\Om\text{ and }\frac{f}{u^\delta}\in L^1_{\mathrm{loc}}(\Om),
$$
such that for every $\phi\in C_c^{1}(\Om)$, we have
\begin{equation}\label{subsolution}
\int_{\Omega}|\nabla u|^{p-2}\nabla u\nabla\phi\,dx+\int\limits_{\mathbb{R}^N}\int\limits_{\mathbb{R}^N}\mathcal{A}\big((u(x,y)\big){\big(\phi(x)-\phi(y)\big)}\,d\mu\leq\int_{\Omega}\frac{f(x)}{u(x)^\delta}\phi(x)\,dx.
\end{equation}
\end{Definition}

\begin{Remark}\label{defrmk1}
Note that Lemma \ref{defineq} and Lemma \ref{locnon1} ensures that Definition \ref{supsolution}-\ref{subsolution} makes sense.
\end{Remark}

Let us fix a weak supersolution $v$ of the problem $(\mathcal{S})$ and define the following nonempty, closed and convex subset of $W_0^{1,p}(\Om)$ given by
$$
\mathcal{K} := \{\phi\in W_0^{1,p}(\Om):0\leq\phi\leq v\text{ in }\Omega\}.
$$

For $k>0$, we define the truncated function
\begin{equation*}
  g_k(l) := 
  \begin{cases}
    \text{min}\{l^{-\delta},k\}, & \text{ if } l>0,\\
    k, & \text{ if } l\leq 0.
  \end{cases}
\end{equation*}

Let $\mathcal{G}_k$ be the primitive of $g_k$ and we define the functional $J_k:W_0^{1,p}(\Om)\to[-\infty,+\infty]$ by
$$
J_k(\phi):=\frac{1}{p}\int_{\Om}|\nabla\phi|^p\,dx+\frac{1}{p}\int\limits_{\mathbb{R}^N}\int\limits_{\mathbb{R}^N}\frac{|\phi(x)-\phi(y)|^p}{|x-y|^{N+ps}}\,dx dy-\int_{\Om}f(x)\mathcal{G}_k(\phi)\,dx.
$$

Note that $J_k$ is coercive. Indeed, for $1<p<N$ with $f\in L^t(\Omega)$ where $t=(p^*)'$, we have
$$
\left|\int_{\Om}f(x)\mathcal{G}_k(\phi)\,dx\right|\leq k\int_{\Om}f|\phi|\,dx\leq k\|f\|_{L^t(\Om)}\|\phi\|_{L^{p^*}(\Om)}\leq kC_0\|f\|_{L^t(\Om)}\left(\int_{\Om}|\nabla\phi|^p\,dx\right)^\frac{1}{p},
$$
for every $\phi\in\mathcal{K}$, $C_0$ is the Sobolev constant obtained by Lemma \ref{embedding}. Similar estimate holds for $p\geq N$. Hence for somce constant $C>0$, we have
$$
J_k(\phi)\geq\frac{1}{p}||\phi||^p-C\|\phi\|,
$$
where $\|\cdot\|$ is given by the gradient norm \eqref{equinorm}. Therefore, $J_k$ is coercive over $\mathcal{K}$. Arguing as in the proof of Lemma \ref{auxresult}, we get $J_k$ is weakly lower semicontinuous over $\mathcal{K}$. Therefore there exists a minimum of $J_k$, say $z\in\mathcal{K}$ such that for every $\psi\in z+\big(W_0^{1,p}(\Om)\cap L^\infty_{c}(\Om)\big)$ with $\psi\in\mathcal{K}$, we have
\begin{equation}\label{ineq2}
\begin{split}
&\int_{\Omega}|\nabla z|^{p-2}\nabla z\nabla(\psi-z)\,dx+\int\limits_{\mathbb{R}^N}\int\limits_{\mathbb{R}^N}\mathcal{A}\big((z(x,y)\big)((\psi-z)(x)-(\psi-z)(y))\,d\mu\\
&\quad\geq\int_{\Omega}f(x)\mathcal{G}_k'(z)(\psi-z)\,dx,
\end{split}
\end{equation}
where $L^\infty_{c}(\Om)$ denotes the set of bounded functions with compact support in $\Om$. Now we establish the following variational inequality.

\begin{Lemma}\label{unilemma1}
For every nonnegative $\phi\in C_c^{1}(\Omega)$, we have
\begin{equation}\label{ineq1}
\int_{\Omega}|\nabla z|^{p-2}\nabla z\nabla\phi\,dx+\int\limits_{\mathbb{R}^N}\int\limits_{\mathbb{R}^N}\mathcal{A}\big((z(x,y)\big)(\phi(x)-\phi(y))\,d\mu\geq\int_{\Omega}f(x)\mathcal{G}_k'(z)\phi\,dx,
\end{equation}
where $z\in\mathcal{K}$ is given by \eqref{ineq2}.
\end{Lemma}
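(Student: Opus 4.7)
The strategy is the classical obstacle-problem device: perturb $z$ inside $\mathcal{K}$ by a one-parameter family whose principal term is a multiple of $\phi$, insert the perturbation into \eqref{ineq2}, then use the supersolution property of $v$ to absorb the excess before passing to the limit.

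For a nonnegative $\phi \in C_c^1(\Omega)$ and small $\epsilon > 0$, the plan is to take
$$\psi_\epsilon := \min\{z + \epsilon \phi,\, v\} = z + \epsilon \phi - w_\epsilon, \qquad w_\epsilon := (z + \epsilon \phi - v)^+.$$
Since $z \leq v$ in $\Omega$, the pointwise bound $0 \leq w_\epsilon \leq \epsilon \phi$ is immediate, so $w_\epsilon$ and $\psi_\epsilon - z$ both lie in $W_0^{1,p}(\Omega) \cap L^\infty_c(\Omega)$, and $0 \leq \psi_\epsilon \leq v$ makes $\psi_\epsilon$ admissible in \eqref{ineq2}. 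Denote by
$$\Phi(u,\eta) := \int_\Omega |\nabla u|^{p-2}\nabla u \cdot \nabla \eta\,dx + \int_{\mathbb{R}^N}\!\int_{\mathbb{R}^N}\! \mathcal{A}(u(x,y))(\eta(x)-\eta(y))\,d\mu$$
the joint local-plus-nonlocal pairing. Substituting $\psi_\epsilon$ into \eqref{ineq2} and dividing by $\epsilon$ yields
\begin{equation*}
\Phi(z,\phi) \;\geq\; \int_\Omega f \mathcal{G}_k'(z)\,\phi\,dx + \tfrac{1}{\epsilon}\Big[\Phi(z, w_\epsilon) - \!\int_\Omega f \mathcal{G}_k'(z)\, w_\epsilon\,dx\Big].
\end{equation*}
Independently, the supersolution $v$ tested with $w_\epsilon \geq 0$ gives $\Phi(v, w_\epsilon) \geq \int_\Omega f v^{-\delta} w_\epsilon\,dx$, where the test-function class in Definition \ref{supsol} has first been extended from $C_c^1(\Omega)$ to $W_0^{1,p}(\Omega) \cap L^\infty_c(\Omega)$ by density (legitimate because $f v^{-\delta} \in L^1_{\mathrm{loc}}(\Omega)$ and $v \in W^{1,p}_{\mathrm{loc}}(\Omega)$).

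Combining these two inputs reduces \eqref{ineq1} to showing
$$\liminf_{\epsilon \to 0^+} \tfrac{1}{\epsilon}\Big\{[\Phi(z, w_\epsilon) - \Phi(v, w_\epsilon)] + \int_\Omega (f v^{-\delta} - f \mathcal{G}_k'(z))\, w_\epsilon\,dx\Big\} \;\geq\; 0.$$
For the local part, the pointwise identity $\nabla w_\epsilon = \nabla z + \epsilon \nabla \phi - \nabla v$ on $\{w_\epsilon > 0\}$ together with the monotonicity of Lemma \ref{AI} gives
$$\Phi_{\mathrm{loc}}(z, w_\epsilon) - \Phi_{\mathrm{loc}}(v, w_\epsilon) \;\geq\; -\epsilon\!\int_{\{w_\epsilon>0\}}\!\bigl(|\nabla v|^{p-2}\nabla v - |\nabla z|^{p-2}\nabla z\bigr)\cdot \nabla \phi\,dx.$$
Since $\nabla v = \nabla z$ a.e.\ on $\{v = z\}$ (a classical property of Sobolev functions) and the sets $\{w_\epsilon > 0\}$ decrease to $\{v = z,\ \phi > 0\}$ as $\epsilon \to 0^+$, this divided by $\epsilon$ vanishes in the limit by dominated convergence. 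For the source term, $w_\epsilon/\epsilon \to \phi\,\chi_{\{v = z\}}$ pointwise with dominant $\phi$, and on $\{v = z\}$ one has $f v^{-\delta} - f \mathcal{G}_k'(z) = f(z^{-\delta} - \min\{z^{-\delta}, k\}) \geq 0$, giving a nonnegative limit.

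The main obstacle will be the nonlocal analog of the gradient decomposition. There is no pointwise chain rule for $w_\epsilon(x) - w_\epsilon(y)$, so the double integral defining $\Phi_{\mathrm{nloc}}(z, w_\epsilon) - \Phi_{\mathrm{nloc}}(v, w_\epsilon)$ must be split according to which of $x, y$ belongs to $\{w_\epsilon > 0\}$. In each of the resulting pieces, Lemma \ref{AI}, applied in the discrete form already used to obtain \eqref{monnon}, extracts a nonnegative principal part modulo an $O(\epsilon)$ remainder. The uniform bound $0 \leq w_\epsilon \leq \epsilon \phi$ together with the finite fractional seminorms of $z$ and $v$ on $\mathrm{supp}\,\phi$ then legitimize the passage to the limit by dominated convergence, delivering \eqref{ineq1}.
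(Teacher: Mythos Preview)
Your overall strategy matches the paper's: both perturb $z$ inside $\mathcal{K}$ by $\psi_\epsilon=\min\{z+\epsilon\phi,v\}$, insert this into \eqref{ineq2}, and invoke the supersolution property of $v$ with the nonnegative test function $w_\epsilon=(z+\epsilon\phi-v)^+$. The paper, however, organises the algebra differently. Instead of comparing $\Phi(z,w_\epsilon)$ with $\Phi(v,w_\epsilon)$, it inserts $\Phi(\psi_\epsilon,\cdot)$ in the middle and then appeals to the exact identity $\int_\Omega g\,dx=\int_\Omega g_v\,dx$ and $\iint h\,d\mu=\iint h_v\,d\mu$ (obtained from \cite[Lemma~4.6]{G} and \cite[Lemma~4.1]{Caninononloc}), which says that on the set where truncation occurs one may replace $\psi_\epsilon$ by $v$ inside the operator. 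This identity is precisely what bypasses the nonlocal difficulty you flag as ``the main obstacle''. The paper also carries an extra cutoff $\phi_h=\xi(z/h)\phi$ and a two-step limit $t\to0$, $h\to\infty$; this truncation is inherited from the cited references but is not needed in your formulation.

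Your route---estimating $\Phi(z,w_\epsilon)-\Phi(v,w_\epsilon)$ directly---is workable, but the nonlocal sketch has a real gap. Splitting by membership in $\{w_\epsilon>0\}$ and applying monotonicity indeed gives, on each piece, a lower bound of the form $\epsilon\big(\mathcal{A}(z(x,y))-\mathcal{A}(v(x,y))\big)(\phi(x)-\phi(y))$ integrated over an $\epsilon$-dependent set. Dominated convergence yields a \emph{limit}, but you need that limit to be \emph{zero}, which does not follow from the bound $0\le w_\epsilon\le\epsilon\phi$ alone. In the local part you correctly use that $\{w_\epsilon>0\}$ shrinks to $\{v=z\}$, where $\nabla v=\nabla z$ a.e.; the nonlocal analogue requires checking that on the limiting region \emph{both} endpoints satisfy $v=z$ (so that $\mathcal{A}(z(x,y))=\mathcal{A}(v(x,y))$), and that the ``mixed'' pieces (one point in $\{w_\epsilon>0\}$, the other not) become asymptotically negligible. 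This is true---for instance, on the mixed piece the constraint $(z-v)(x)<(z-v)(y)$ forces the limit set to be empty since $z\le v$---but it is not the mere boundedness statement you wrote, and it is exactly the content that the paper outsources to the identity from \cite{Caninononloc}.
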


\begin{proof}
Let us consider a real valued function $\xi\in C_c^{\infty}(\mathbb{R})$ such that $0\leq\xi\leq 1$ in $\mathbb{R}$, $\xi\equiv 1$ in $[-1,1]$ and $\xi\equiv 0$ in $(-\infty,-2]\cup[2,\infty)$. Define the function $\phi_h:=\xi(\frac{z}{h})\phi$ and $\phi_{h,t}:=\text{min}\{z+t\phi_h,v\}$ with $h\geq 1$ and $t>0$ for a given nonnegative $\phi\in C_c^{1}(\Omega)$. 
Then by the inequality (\ref{ineq2}), we have
\begin{equation}\label{ineq3}
\begin{split}
&\int_{\Omega}|\nabla z|^{p-2}\nabla z\nabla(\phi_{h,t}-z)\,dx+\int\limits_{\mathbb{R}^N}\int\limits_{\mathbb{R}^N}\mathcal{A}\big((z(x,y)\big))((\phi_{h,t}-z)(x)-(\phi_{h,t}-z)(y))\,d\mu\\
&\quad\geq\int_{\Omega}f(x)\mathcal{G}_k'(z)(\phi_{h,t}-z)\,dx.
\end{split}
\end{equation}

We define,
\begin{align*}
I:&=C\int_{\Omega}|\nabla(\phi_{h,t}-z)|^{2}(|\nabla\phi_{h,t}|+|\nabla z|)^{p-2}\,dx\\
&+C\int\limits_{\mathbb{R}^N}\int\limits_{\mathbb{R}^N}\frac{\big(|\phi_{h,t}(x)-\phi_{h,t}(y)|+|z(x)-z(y)|\big)^{p-2}\big((\phi_{h,t}-z)(x)-(\phi_{h,t}-z)(y)\big)^2}{|x-y|^{N+ps}}\,dx dy,
\end{align*}
 where $C$ is given by Lemma \ref{AI}. First applying Lemma \ref{AI} and then by using \eqref{ineq3}, we have
\begin{multline*}
I\leq \int_{\Omega}\left(|\nabla\phi_{h,t}|^{p-2}\nabla\phi_{h,t}-|\nabla z|^{p-2}\nabla z\right)\nabla(\phi_{h,t}-z)\,dx\\
\quad+\int\limits_{\mathbb{R}^N}\int\limits_{\mathbb{R}^N}\big(\mathcal{A}(\phi_{h,t}(x,y))-\mathcal{A}(z(x,y))\big)\big((\phi_{h,t}-z)(x)-(\phi_{h,t}-z)(y)\big)\,d\mu\\
\leq \int_{\Omega}|\nabla\phi_{h,t}|^{p-2}\nabla\phi_{h,t}\nabla(\phi_{h,t}-z)\,dx
+\int\limits_{\mathbb{R}^N}\int\limits_{\mathbb{R}^N}\mathcal{A}\big((\phi_{h,t}(x,y)\big)\big((\phi_{h,t}-z)(x)-(\phi_{h,t}-z)(y)\big)\,d\mu\\
\quad\quad-\int_{\Om}f(x)\mathcal{G}_k'(z)(\phi_{h,t}-z)\,dx.
\end{multline*}
Therefore, 
\begin{multline}
\label{ineq4}
I-\int_{\Omega}f(x)\big(\mathcal{G}_k'(\phi_{h,t})-\mathcal{G}_k'(z)\big)(\phi_{h,t}-z)\,dx
\leq\int_{\Omega}|\nabla\phi_{h,t}|^{p-2}\nabla\phi_{h,t}\nabla(\phi_{h,t}-z)\,dx\\
+\int\limits_{\mathbb{R}^N}\int\limits_{\mathbb{R}^N}\mathcal{A}\big(\phi_{h,t}(x,y)\big)\big((\phi_{h,t}-z)(x)-(\phi_{h,t}-z)(y)\big)\,d\mu
-\int_{\Omega}f(x)\mathcal{G}_k'(\phi_{h,t})(\phi_{h,t}-z)\,dx\\
=\Big\{\int_{\Omega}g(x)\,dx+\int\limits_{\mathbb{R}^N}\int\limits_{\mathbb{R}^N}h(x,y)\,dx dy-\int_{\Omega}f(x)\mathcal{G}_k{'}(\phi_{h,t})(\phi_{h,t}-z-t\phi_h)\,dx\Big\}\\
+t\Big\{\int_{\Omega}|\nabla\phi_{h,t}|^{p-2}\nabla\phi_h\,dx
+\int\limits_{\mathbb{R}^N} \int\limits_{\mathbb{R}^N} \mathcal{A}\big(\phi_{h,t}(x,y)\big) \big(\phi_h(x)-\phi_h(y)\big)\,d\mu
-\int_{\Omega}f(x)\mathcal{G}_k'(\phi_{h,t})\phi_h\,dx\Big\},
\end{multline}
where 
$$
g(x)=|\nabla\phi_{h,t}|^{p-2}\nabla\phi_{h,t}\nabla(\phi_{h,t}-z-t\phi_h)
$$
and
$$
h(x,y)=\frac{\mathcal{A}\big(\phi_{h,t}(x,y)\big)\big((\phi_{h,t}-z-t\phi_h)(x)-(\phi_{h,t}-z-t\phi_h)(y)\big)}{|x-y|^{N+ps}}.$$
Let us denote by 
$$
g_v(x)=|\nabla v|^{p-2}\nabla v\nabla(\phi_{h,t}-z-t\phi_h)
$$
and
$$
h_v(x,y)=\frac{\mathcal{A}\big((v(x,y)\big)\big((\phi_{h,t}-z-t\phi_h)(x)-(\phi_{h,t}-z-t\phi_h)(y)\big)}{|x-y|^{N+ps}}.
$$
Now following the exact arguments as in the proof of \cite[Lemma $4.6$]{G} and \cite[Lemma $4.1$]{Caninononloc} we obtain
$$
\int_{\Om}g(x,y)\,dx=\int_{\Om}g_v(x)\,dx\text{ and }\int\limits_{\mathbb{R}^N}\int\limits_{\mathbb{R}^N}h(x,y)\,dx dy=\int\limits_{\mathbb{R}^N}\int\limits_{\mathbb{R}^N}h_v(x,y)\,dx dy.
$$
Noting the above property along with the fact that $v$ is a weak supersolution of $(\mathcal{S})$, we choose $(z+t\phi_h-\phi_{h,t})$ as a test function in (\ref{supsolution}) and using
\begin{align*}
\phi_{h,t}=
\begin{cases}
v \text{ on }S_v:=\{x\in\Omega:z(x)+t\phi_h(x)\geq v(x)\},\\
z+t\phi_h\text{ on }S_v^{c},
\end{cases}
\end{align*}
we obtain
\begin{multline*}
\int_{\Om}g(x)\,dx+\int\limits_{\mathbb{R}^N}\int\limits_{\mathbb{R}^N}h(x,y)\,dx dy-\int_{\Om}f(x)\mathcal{G}_k'(\phi_{h,t})(\phi_{h,t}-z-t\phi_h)\,dx\\
\quad=\int_{\Om}g_v(x)\,dx+\int\limits_{\mathbb{R}^N}\int\limits_{\mathbb{R}^N}h_v(x,y)\,dx dy-\int_{\Om}f(x)\mathcal{G}_k'(\phi_{h,t})(\phi_{h,t}-z-t\phi_h)\,dx\leq 0.
\end{multline*}
Using the above fact in \eqref{ineq4} with the observation that $I\geq 0$ and $\phi_{h,t}-z\leq t\phi_h$, we have
\begin{equation}\label{tlim}
\begin{split}
\int_{\Omega}|\nabla &\phi_{h,t}|^{p-2}\nabla\phi_{h,t}\nabla\phi_h\,dx+\int\limits_{\mathbb{R}^N}\int\limits_{\mathbb{R}^N}\mathcal{A}\big((\phi_{h,t}(x,y)\big)\big((\phi_h(x)-\phi_h(y)\big)\,d\mu\\
&\quad-\int_{\Omega}f(x)\mathcal{G}_k'(\phi_{h,t})\phi_h\,dx\geq -\int_{\Omega}f|\mathcal{G}_k'(\phi_{h,t})-\mathcal{G}_k'(z)|\phi_h\,dx.
\end{split}
\end{equation}
Noting $z\in W_0^{1,p}(\Om)$ and recalling the definition of $\phi_{h,t}$, by the Lebesgue dominated convergence theorem, letting $t\to 0$ in \eqref{tlim}, we obtain
\begin{align*}
\int_{\Omega}|\nabla z|^{p-2}\nabla z\nabla\phi_h\,dx+\int\limits_{\mathbb{R}^N}\int\limits_{\mathbb{R}^N}\mathcal{A}\big((z(x,y)\big)\big((\phi_h(x)-\phi_h(y)\big)\,d\mu\geq\int_{\Omega}f(x)\mathcal{G}_k'(z)\phi_{h}\,dx.
\end{align*}
Now, passing the limit as $h\to\infty$ in the above inequality, the result follows.
\end{proof}

Now we have the following comparison principle.
\begin{Lemma}\label{cp}
Suppose that $u$ is a weak subsolution of $(\mathcal{S})$ such that $u\leq 0$ on $\partial\Om$ and $v$ be a weak supersolution of $(\mathcal{S})$. Then $u\leq v$ in $\Om$.
\end{Lemma}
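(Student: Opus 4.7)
Following Canino--Sciunzi~\cite{Caninouni}, I would interpose between $u$ and $v$ an auxiliary function $z \in \mathcal{K}$ furnished by Lemma~\ref{unilemma1}, for which $0 \leq z \leq v$ holds by construction, and thereby reduce the comparison to showing $u \leq z$ in $\Omega$; combined with $z \leq v$ this yields $u \leq v$.

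Fix $\epsilon > 0$ and choose $k \geq \epsilon^{-\delta}$ in the definition of $g_k$, and let $z$ be the minimizer of $J_k$ over $\mathcal{K}$ so that \eqref{ineq1} holds. The key test function is
$$
\phi := (u - \epsilon - z)^+ = \bigl((u-\epsilon)^+ - z\bigr)^+.
$$
Since $(u-\epsilon)^+ \in W_0^{1,p}(\Omega)$ by Definition~\ref{bc} and $z \in W_0^{1,p}(\Omega)$, we have $\phi \in W_0^{1,p}(\Omega)$, and a cutoff/density approximation will make $\phi$ admissible in both the subsolution inequality for $u$ (which formally demands $C_c^1(\Omega)$ test functions while $u$ lies only in $W^{1,p}_{\mathrm{loc}}(\Omega)$) and in~\eqref{ineq1}. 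Subtracting the two inequalities tested against $\phi$ yields
\begin{multline*}
\int_\Omega \bigl(|\nabla u|^{p-2}\nabla u - |\nabla z|^{p-2}\nabla z\bigr) \cdot \nabla \phi \, dx \\
+ \iint_{\mathbb{R}^N \times \mathbb{R}^N} \bigl(\mathcal{A}(u(x,y)) - \mathcal{A}(z(x,y))\bigr)\bigl(\phi(x) - \phi(y)\bigr)\, d\mu \leq \int_\Omega f\bigl(u^{-\delta} - g_k(z)\bigr)\phi\, dx.
\end{multline*}

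By Lemma~\ref{AI} the local term on the left is nonnegative, and the monotonicity argument used in the proof of Lemma~\ref{approx} (cf.~\cite{Ling}) shows the same for the nonlocal term. For the right-hand side, I claim $u^{-\delta} \leq g_k(z)$ on $\{\phi > 0\} = \{u > \epsilon + z\}$: on this set $u > \epsilon$, so $u^{-\delta} < \epsilon^{-\delta} \leq k$; if $z = 0$ then $g_k(z) = k \geq u^{-\delta}$; if $z > 0$ with $z^{-\delta} \geq k$ then again $g_k(z) = k \geq u^{-\delta}$; and if $0 < z^{-\delta} < k$ then $g_k(z) = z^{-\delta} > u^{-\delta}$ since $u > z$. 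Consequently the right-hand side is nonpositive, forcing both integrals on the left to vanish.

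Applying Lemma~\ref{AI} once more gives $\nabla\phi = 0$ almost everywhere in $\Omega$; since $\phi \in W_0^{1,p}(\Omega)$, this forces $\phi \equiv 0$, i.e., $u \leq \epsilon + z$, and letting $\epsilon \to 0^+$ produces $u \leq z \leq v$. The principal obstacle I anticipate is the admissibility of $\phi$: because $u$ lies only in $W^{1,p}_{\mathrm{loc}}(\Omega)$ with a merely distributional boundary condition, one must truncate by cutoffs and approximate to justify plugging $\phi$ into the subsolution relation and to control the nonlocal double integral. The boundary condition in Definition~\ref{bc}, together with the fact that $u > \epsilon$ on $\{\phi > 0\}$ (so that $f u^{-\delta} \phi$ is integrable under the assumption $f \in L^t(\Omega)$), should make this rigorous.
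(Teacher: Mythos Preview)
Your proposal is correct and follows essentially the same route as the paper: interpose the minimizer $z\in\mathcal{K}$ from Lemma~\ref{unilemma1}, test both relations with $(u-z-\epsilon)^+$, and use the sign of $f(u^{-\delta}-g_k(z))$ on $\{u>z+\epsilon\}$ to conclude. The paper carries out precisely the ``cutoff/density approximation'' you flag as the principal obstacle, making it explicit through a double truncation: it replaces $(u-z-\epsilon)^+$ by $T_\eta\bigl((u-z-\epsilon)^+\bigr)$ and, for the subsolution inequality, further approximates by $\psi_{n,\eta}=T_\eta\bigl(\min\{(u-z-\epsilon)^+,\phi_n^+\}\bigr)\in W_0^{1,p}(\Omega)\cap L_c^\infty(\Omega)$, passing $n\to\infty$ (using that $|\nabla u|^p$ is integrable on $\{u>\epsilon\}$ since $(u-\epsilon)^+\in W_0^{1,p}(\Omega)$) and then $\eta\to\infty$.
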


\begin{proof}
Let $k>0$ and $\epsilon=2k^{-\frac{1}{\delta}}$. Since $u\leq 0$ on $\partial\Omega$, for $z\in\mathcal{K}$ as given by Lemma \ref{unilemma1}, we have $(u-z-\epsilon)^{+}\in W_0^{1,p}(\Om)$. For $\eta>0$, let us define
$T_{\eta}(s):=\min\{s,\eta\}$, if $s\geq 0$ and $T_\eta(-s)=-T_{\eta}(s)$, if $s<0$. Applying Lemma \ref{unilemma1}, for any $\eta>0$, by density arguments, it follows that
\begin{equation}
\label{Ineq1}
\begin{split}
&\int_{\Omega}|\nabla z|^{p-2}\nabla z\nabla T_{\eta}\big((u-z-\epsilon)^{+}\big)\,dx\\
&\quad+\int\limits_{\mathbb{R}^N}\int\limits_{\mathbb{R}^N}\mathcal{A}\big(z(x,y)\big)\big(T_{\eta}((u-z-\epsilon)^+(x))-T_{\eta}((u-z-\epsilon)^+(y))\big)\,d\mu\\
&\quad\quad\geq \int_{\Omega}f(x)\mathcal{G}_k'(z)T_{\eta}\big((u-z-\epsilon)^{+}\big)\,dx.
\end{split}
\end{equation}
Since $(u-z-\epsilon)^{+}\in W_0^{1,p}(\Om)$, there exists a sequence $\phi_n\in C_{c}^\infty(\Omega)$ such that $\phi_n\to (u-z-\epsilon)^{+}$ strongly in $W_0^{1,p}(\Om)$. We define $$
\psi_{n,\eta}:=T_{\eta}\big(\text{min}\{(u-z-\epsilon)^{+},\phi_{n}^{+}\}\big)\in W_0^{1,p}(\Om)\cap L_c^{\infty}(\Omega).
$$
Since $u$ is a weak subsolution of $(\mathcal{S})$, we have
\begin{align*}
\int_{\Omega}|\nabla u|^{p-2}\nabla u\nabla\psi_{n,\eta}\,dx+\int\limits_{\mathbb{R}^N}\int\limits_{\mathbb{R}^N}\mathcal{A}\big(u(x,y)\big)\big(\psi_{n,\eta}(x)-\psi_{n,\eta}(y))\big)\,d\mu\leq\int_{\Omega}\frac{f}{u^\delta}\psi_{n,\eta}\,dx.
\end{align*}
As $|\nabla u|^p$ is integrable in the support of $(u-z-\epsilon)^+$, passing the limit as $n\to\infty$,
\begin{equation}\label{Ineq2}
\begin{split}
&\int_{\Omega}|\nabla u|^{p-2}\nabla u\nabla T_{\eta}\big((u-z-\epsilon)^{+}\big)\,dx\\
&\quad+\int\limits_{\mathbb{R}^N}\int\limits_{\mathbb{R}^N}\mathcal{A}\big(u(x,y)\big)\big(T_{\eta}((u-z-\epsilon)^+(x))-
T_{\eta}((u-z-\epsilon)^+(x))\big)\,d\mu\\
&\quad\quad\leq\int_{\Omega}\frac{f}{u^\delta}T_{\eta}(u-z-\epsilon)^{+}\,dx.
\end{split}
\end{equation}
Following the proof of \cite[Theorem 4.2]{Caninononloc} we have
\begin{equation}\label{ueqn}
\begin{split}
&\mathcal{A}(u(x,y))\big(T_{\eta}((u-z-\epsilon)^+(x))-
T_{\eta}((u-z-\epsilon)^+(x))\big)\\
&=\mathcal{A}(u(x,y))\big((u-z)(x)-(u-z)(y)\big)H(x,y),
\end{split}
\end{equation}
and
\begin{equation}\label{zeqn}
\begin{split}
&\mathcal{A}(z(x,y))\big(T_{\eta}((u-z-\epsilon)^+(x))-
T_{\eta}((u-z-\epsilon)^+(x))\big)\\
&=\mathcal{A}(z(x,y))\big((u-z)(x)-(u-z)(y)\big)H(x,y),
\end{split}
\end{equation}
with
$$
H(x,y):=\frac{T_{\eta}((u-z-\epsilon)^{+}(x))-T_{\eta}((u-z-\epsilon)^{+}(y))}{(u-z)(x)-(u-z)(y)},
$$
where $(u-z)(x)-(u-z)(y)\neq 0$. Subtracting (\ref{Ineq1}) and (\ref{Ineq2}) and then, using \eqref{ueqn}, \eqref{zeqn} along with Lemma \ref{AI}, we have
\begin{multline*}
C\int_{\Omega}|\nabla T_{\eta}\big((u-z-\epsilon)^{+}\big)|^{2}(|\nabla u|+|\nabla z|)^{p-2}\,dx\\
+C\int\limits_{\mathbb{R}^N}\int\limits_{\mathbb{R}^N}\frac{\big(|u(x)-u(y)|+|z(x)-z(y)|\big)^{p-2}\big((u-z)(x)-(u-z)(y)\big)^2}{|x-y|^{N+ps}}H(x,y)\,dx dy\\
\leq\int_{\Omega}(|\nabla u|^{p-2}\nabla u-|\nabla z|^{p-2}\nabla z)\nabla T_{\eta}\big((u-z-\epsilon)^{+}\big)\,dx\\
+\int\limits_{\mathbb{R}^N}\int\limits_{\mathbb{R}^N}\big(\mathcal{A}(u(x,y))-\mathcal{A}(z(x,y))\big)\big(T_{\eta}((u-z-\epsilon)^{+}(x))-T_{\eta}((u-z-\epsilon)^{+}(y))\big)\,d\mu\\
\leq \int_{\Omega}f(x)\left(\frac{1}{u^\delta}-\mathcal{G}_k'(z)\right)T_{\eta}\big((u-z-\epsilon)^{+}\big)\,dx\\
\leq\int_{\Omega}f(x)\big(\mathcal{G}_k'(u)-\mathcal{G}_k'(z)\big)T_{\eta}\big((u-z-\epsilon)^{+}\big)\,dx\leq 0.
\end{multline*}
 We note that in the final estimate above, we have used $\epsilon>k^{-\frac{1}{\delta}}$, the definition of $g_k$ and the fact that $u\geq\epsilon$ in the support of $(u-z-\epsilon)^+$. Therefore, using the nonnegativity of $H$, letting $\eta\to\infty$, the above estimate yields 
$$
\int_{\Omega}|\nabla \big((u-z-\epsilon)^{+}\big)|^{2}(|\nabla u|+|\nabla z|)^{p-2}\,dx=0.
$$
As a consequence, $u\leq z+2k^{-\frac{1}{\delta}}\leq v+2k^{-\frac{1}{\delta}}.$ Letting $k\to\infty$, we get $u\leq v$ in $\Omega$.
\end{proof}

\section{Preliminaries for the mixed Sobolev inequalities}

Throughout this section, we assume $0<\delta<1$, $f\in L^m(\Om)\setminus\{0\}$, where $m$ is given by \eqref{m} unless otherwise mentioned. For $v\in W_0^{1,p}(\Om)$ by $\|v\|$, we mean the following norm as defined by \eqref{norm}
\begin{equation}\label{Sobnorm}
\|v\|:=\|v\|_{W_0^{1,p}(\Om)}:=\left(\int_{\Omega}|\nabla v|^p\,dx+\int\limits_{\mathbb{R}^N}\int\limits_{\mathbb{R}^N}\frac{|v(x)-v(y)|^p}{|x-y|^{N+ps}}\,dx\,dy\right)^\frac{1}{p}.
\end{equation}
Let $u_n\in W_0^{1,p}(\Om)$ be the solution of the problem $(\mathcal{A})$ as given by Lemma \ref{approx} and denote by $u_\delta$ to be the pointwise limit of $u_n$ in $\Om$. Note that, by Theorem \ref{thm1}, $u_\delta\in W_0^{1,p}(\Om)$ is the weak solution of $(\mathcal{S})$. Moreover, by Remark \ref{rmkapprox}, we have $u_n\leq u_\delta$ in $\Om$, for all $n\in\mathbb{N}$. Next, we obtain some auxiliary results, which are very useful to prove Theorem \ref{thm5}.

First, we prove the following result, which allows us to choose test functions from the space $W_0^{1,p}(\Om)$ in the equation $\mathcal{(W)}$.

\begin{Lemma}\label{testfn}
Let $\delta>0$, $f\in L^1(\Omega)\setminus\{0\}$ be nonnegative and $u\in W_0^{1,p}(\Omega)$ be a weak solution of the problem $(\mathcal{S})$, then $(\mathcal{W})$ holds for every $\phi\in W_0^{1,p}(\Omega)$.
\end{Lemma}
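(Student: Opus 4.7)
The plan is to use the density of $C_c^1(\Omega)$ in $W_0^{1,p}(\Omega)$. Given $\phi \in W_0^{1,p}(\Omega)$, I fix a sequence $\{\phi_n\} \subset C_c^1(\Omega)$ with $\phi_n \to \phi$ in $W_0^{1,p}(\Omega)$; extracting a subsequence (not relabelled), I may also assume $\phi_n \to \phi$ pointwise a.e.\ in $\Omega$. Since $(\mathcal{W})$ holds for each $\phi_n$, the strategy is to pass to the limit $n\to\infty$ in each of the three integrals appearing in $(\mathcal{W})$.

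First I would dispatch the two Sobolev-type integrals on the left. For the local term, H\"older's inequality gives
$$
\left|\int_{\Omega}|\nabla u|^{p-2}\nabla u\cdot\nabla(\phi_n-\phi)\,dx\right|\leq \|\nabla u\|_{L^p(\Omega)}^{p-1}\,\|\nabla(\phi_n-\phi)\|_{L^p(\Omega)}\longrightarrow 0,
$$
which uses $u\in W_0^{1,p}(\Omega)$. For the nonlocal term, the analogous H\"older inequality with respect to the measure $d\mu$, combined with Lemma~\ref{locnon1} (which ensures the Gagliardo seminorm of $u$ is controlled by $\|\nabla u\|_{L^p(\Omega)}$), yields
$$
\left|\iint_{\mathbb{R}^N\times\mathbb{R}^N}\mathcal{A}\big(u(x,y)\big)\big((\phi_n-\phi)(x)-(\phi_n-\phi)(y)\big)\,d\mu\right|\longrightarrow 0.
$$
Denote by $L(\phi)$ the sum of these two integrals with test function $\phi$; then $L(\phi_n)\to L(\phi)\in\mathbb{R}$.

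The main obstacle is the singular term $\int_{\Omega}fu^{-\delta}\phi_n\,dx$, because $fu^{-\delta}$ is only known to lie in $L^1_{\mathrm{loc}}(\Omega)$ and need not be paired with an arbitrary $W_0^{1,p}$ function via H\"older. To handle it, I would split $\phi=\phi^+-\phi^-$ with $\phi^\pm\in W_0^{1,p}(\Omega)$ and, by linearity, treat the case $\phi\geq 0$ only. For such $\phi$, I would refine the approximation to obtain $\phi_n\in C_c^\infty(\Omega)$ with $0\leq\phi_n\leq\phi$ a.e.\ in $\Omega$ and $\phi_n\to\phi$ in $W_0^{1,p}(\Omega)$ and pointwise a.e.; such a sequence can be produced by first truncating to $T_k(\phi)$, multiplying by a smooth compactly supported cutoff $\eta_j$ with $\eta_j\nearrow 1$, mollifying, and taking the positive part, after which a diagonal selection yields the desired $\phi_n$. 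Fatou's lemma applied to the nonnegative integrand $fu^{-\delta}\phi_n$, together with the identity $\int_{\Omega}fu^{-\delta}\phi_n\,dx=L(\phi_n)$ and the convergence $L(\phi_n)\to L(\phi)$, gives
$$
\int_{\Omega}\frac{f}{u^\delta}\phi\,dx\leq\liminf_{n\to\infty}\int_{\Omega}\frac{f}{u^\delta}\phi_n\,dx=L(\phi)<\infty,
$$
so $fu^{-\delta}\phi\in L^1(\Omega)$. Once integrability is secured, the bound $0\leq\phi_n\leq\phi$ allows the Lebesgue dominated convergence theorem to upgrade Fatou to genuine convergence $\int_{\Omega}fu^{-\delta}\phi_n\,dx\to\int_{\Omega}fu^{-\delta}\phi\,dx$. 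Combining the three limits gives $(\mathcal{W})$ for $\phi$, which together with the linearity step for signed $\phi$ completes the proof. The step I expect to require the most care is the construction of the monotone-type approximation $0\leq\phi_n\leq\phi$, as mollification does not preserve this pointwise bound directly and the truncation/cutoff/mollification diagonal argument must be arranged to preserve both the Sobolev convergence and the upper bound by $\phi$.
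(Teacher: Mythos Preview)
Your treatment of the two left-hand integrals is fine and matches the paper. The gap is in the singular term. Your plan hinges on producing, for a nonnegative $\phi\in W_0^{1,p}(\Omega)$, a sequence $\phi_n\in C_c^\infty(\Omega)$ with $0\le\phi_n\le\phi$ a.e.\ and $\phi_n\to\phi$ in $W_0^{1,p}(\Omega)$. The recipe you sketch (truncate, multiply by a cutoff, mollify, take the positive part) does not deliver this: mollification averages values and can raise the function above $\phi$ on a set of positive measure, and the positive part does nothing to fix an upper-bound violation. You yourself flag this as the delicate step, but no valid construction is given, so the dominated-convergence argument that follows (with dominator $fu^{-\delta}\phi$) is left unjustified.

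The paper sidesteps this difficulty entirely by dropping the requirement $\phi_n\le\phi$. For an arbitrary $\psi\in W_0^{1,p}(\Omega)$ it takes any sequence $0\le\psi_n\in C_c^1(\Omega)$ with $\psi_n\to|\psi|$ in $W_0^{1,p}(\Omega)$ and pointwise a.e.; Fatou then gives
\[
\int_\Omega \frac{f}{u^\delta}\,|\psi|\,dx\le\liminf_{n\to\infty}\int_\Omega \frac{f}{u^\delta}\,\psi_n\,dx=\liminf_{n\to\infty} L(\psi_n)\le C\|u\|^{p-1}\,\|\,|\psi|\,\|\le C\|u\|^{p-1}\|\psi\|,
\]
the last estimates being your H\"older bound for $L$ together with $\|\,|\psi|\,\|\le\|\psi\|$. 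Thus $\psi\mapsto\int_\Omega fu^{-\delta}\psi\,dx$ is a bounded linear functional on $W_0^{1,p}(\Omega)$. Applying this bound with $\psi=\phi_n-\phi$ immediately gives convergence of the singular term, with no dominated convergence and no monotone approximation needed. The fix to your argument is therefore simply to remove the constraint $\phi_n\le\phi$ in the Fatou step and, once integrability is obtained for every $\psi$, invoke the resulting continuity of the singular functional rather than DCT.
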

\begin{proof}
Let $u\in W_0^{1,p}(\Omega)$ solves the problem $(\mathcal{S})$. Therefore, for every $\phi\in C_c^1(\Omega)$, we have
\begin{equation}\label{smthtest}
\int_{\Omega}|\nabla u|^{p-2}\nabla u\cdot\nabla\phi\,dx+\int\limits_{\mathbb{R}^N}\int\limits_{\mathbb{R}^N}\mathcal{A}\big(u(x,y)\big){\big(\phi(x)-\phi(y)\big)}\,d\mu=\int_{\Omega}\frac{f(x)}{u(x)^\delta}\phi(x)\,dx.
\end{equation}
By density, for every $\psi\in W_0^{1,p}(\Omega)$, there exists a sequence of functions $0\leq \psi_n\in C_c^{1}(\Omega)$ such that $\psi_n\to |\psi|$ strongly in $W_0^{1,p}(\Omega)$ as $n\to\infty$ and pointwise almost everywhere in $\Omega$. We observe that
\begin{multline}
\label{unique}
\Big|\int_{\Omega}\frac{f(x)}{u(x)^\delta}\psi\,dx\Big|\leq \int_{\Omega}\frac{f(x)}{u(x)^\delta}|\psi|\,dx\leq\liminf_{n\to\infty}\int_{\Omega}\frac{f(x)}{u(x)^\delta}\psi_n\,dx
=\liminf_{n\to\infty}\langle -\Delta_p u+(-\Delta_p)^s u,\psi_n \rangle\\
=\int_{\Omega}|\nabla u|^{p-2}\nabla u\cdot\nabla\psi_n\,dx+\int\limits_{\mathbb{R}^N}\int\limits_{\mathbb{R}^N}\mathcal{A}\big(u(x,y)\big){\big(\psi_n(x)-\psi_n(y)\big)}\,d\mu\\
\leq C\|u\|^{p-1}\lim_{n\to\infty}\|\psi_n\|\leq C\|u\|^{p-1}\||\psi|\|
\leq C\|u\|^{p-1}\|\psi\|,
\end{multline}
for some positive constant $C$ (independent of $n$). Let $\phi\in W_0^{1,p}(\Omega)$, then there exists $\phi_n\in C_c^{1}(\Omega)$ converges to $\phi$ strongly in $W_0^{1,p}(\Omega)$. Now, using $\psi=\phi_n-\phi$ in \eqref{unique}, we obtain
\begin{equation}\label{lhslim}
\lim_{n\to\infty}\Big|\int_{\Omega}\frac{f(x)}{u(x)^\delta}(\phi_n-\phi)\,dx\Big|\leq C\|u\|^{p-1}\lim_{n\to\infty}\|\phi_n-\phi\|=0.
\end{equation}
Again, since $\phi_n\to\phi$ strongly in $W_0^{1,p}(\Omega)$ as $n\to\infty$, we have
\begin{equation}\label{rhslim}
\lim_{n\to\infty}\left\{\int_{\Omega}|\nabla u|^{p-2}\nabla u\cdot\nabla(\phi_n-\phi)\,dx+\int\limits_{\mathbb{R}^N}\int\limits_{\mathbb{R}^N}\mathcal{A}\big(u(x,y)\big){\big((\phi_n-\phi)(x)-(\phi_n-\phi)(y)\big)}\,d\mu\right\}=0.
\end{equation}
Hence, using \eqref{lhslim} and \eqref{rhslim} in \eqref{smthtest} the result follows.
\end{proof}

As a consequence of Lemma \ref{testfn}, we have the following simple proof of the uniqueness result.

\begin{Corollary}\label{tstcor}
Let $\delta>0$ and $f\in L^1(\Om)\setminus\{0\}$ be nonnegavite. Then, the problem $\mathcal{(S)}$ admits at most one weak solution in $W_0^{1,p}(\Om)$.
\end{Corollary}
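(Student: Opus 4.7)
The strategy is the standard monotonicity-of-the-$p$-Laplacian argument, made legitimate by Lemma \ref{testfn}. Suppose $u_1,u_2\in W_0^{1,p}(\Om)$ are two weak solutions of $(\mathcal{S})$. Since the difference $\phi := u_1-u_2$ lies in $W_0^{1,p}(\Om)$, Lemma \ref{testfn} lets us insert $\phi$ as a test function in $(\mathcal{W})$ for both $u_1$ and $u_2$. Subtracting the two identities yields
\begin{equation*}
\int_{\Omega}\bigl(|\nabla u_1|^{p-2}\nabla u_1-|\nabla u_2|^{p-2}\nabla u_2\bigr)\cdot\nabla(u_1-u_2)\,dx + \mathcal{N}(u_1,u_2) = \int_{\Omega} f(x)\left(\frac{1}{u_1^{\delta}}-\frac{1}{u_2^{\delta}}\right)(u_1-u_2)\,dx,
\end{equation*}
where $\mathcal{N}(u_1,u_2)$ is the nonlocal contribution
\begin{equation*}
\mathcal{N}(u_1,u_2) := \int_{\mathbb{R}^N}\!\int_{\mathbb{R}^N}\bigl(\mathcal{A}(u_1(x,y))-\mathcal{A}(u_2(x,y))\bigr)\bigl((u_1-u_2)(x)-(u_1-u_2)(y)\bigr)\,d\mu.
\end{equation*}

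Next, I would argue that each of the three integrals has a definite sign. By the vector inequality of Lemma \ref{AI} applied pointwise to $a=\nabla u_1(x)$, $b=\nabla u_2(x)$, the local integral on the left is nonnegative; applying the same inequality with $a=u_1(x)-u_1(y)$ and $b=u_2(x)-u_2(y)$ (scalar case) shows that the integrand of $\mathcal{N}(u_1,u_2)$ is nonnegative as well, hence $\mathcal{N}(u_1,u_2)\ge 0$. On the other hand, since $t\mapsto t^{-\delta}$ is strictly decreasing on $(0,\infty)$ and $u_1,u_2>0$ in $\Om$ while $f\ge 0$, the integrand on the right satisfies $f(x)\bigl(u_1^{-\delta}-u_2^{-\delta}\bigr)(u_1-u_2)\le 0$ pointwise, so the right-hand side is nonpositive.

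Combining these sign observations forces the left-hand side to be zero; in particular
\begin{equation*}
\int_{\Omega}\bigl(|\nabla u_1|^{p-2}\nabla u_1-|\nabla u_2|^{p-2}\nabla u_2\bigr)\cdot\nabla(u_1-u_2)\,dx = 0,
\end{equation*}
and then a second application of Lemma \ref{AI} yields $\nabla(u_1-u_2)=0$ a.e.\ in $\Om$. Since $u_1-u_2\in W_0^{1,p}(\Om)$, Poincaré's inequality gives $u_1\equiv u_2$ in $\Om$, proving uniqueness.

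The one delicate point worth flagging is the admissibility of $\phi=u_1-u_2$ in $(\mathcal{W})$: a priori $f u_i^{-\delta}$ need not be in $L^1(\Om)$, so one cannot pair it against an arbitrary $W_0^{1,p}$ function by brute force. This is precisely what Lemma \ref{testfn} resolves, via the bound $\bigl|\int_{\Om} fu^{-\delta}\psi\bigr|\le C\|u\|^{p-1}\|\psi\|$ obtained there; with that lemma in hand, the proof above goes through without further subtlety.
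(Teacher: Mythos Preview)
Your proof is correct and follows essentially the same approach as the paper: both rely on Lemma \ref{testfn} to justify inserting a $W_0^{1,p}(\Om)$ test function, then exploit the monotonicity of the mixed operator together with the strict decrease of $t\mapsto t^{-\delta}$. The only cosmetic difference is that the paper tests with $(u_1-u_2)^+$ and concludes $u_1\le u_2$ (then repeats symmetrically), whereas you test directly with $u_1-u_2$ and finish in one step via Poincar\'e; your version is slightly more economical since it avoids invoking the separate positivity lemma for the nonlocal term with truncations.
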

\begin{proof}
By contradiction, suppose $u_1,u_2\in W_0^{1,p}(\Om)$ are two weak solutions of the problem $\mathcal{(S)}$. Then, by Lemma \ref{testfn}, for every $\phi\in W_0^{1,p}(\Om)$, we have
\begin{equation}\label{tstfnap1}
\int_{\Omega}|\nabla u_1|^{p-2}\nabla u_1\cdot\nabla\phi\,dx+\int\limits_{\mathbb{R}^N}\int\limits_{\mathbb{R}^N}\mathcal{A}\big(u_1(x,y)\big){\big(\phi(x)-\phi(y)\big)}\,d\mu=\int_{\Omega}\frac{f(x)}{u_1(x)^\delta}\phi(x)\,dx,
\end{equation}

\begin{equation}\label{tstfnap2}
\int_{\Omega}|\nabla u_2|^{p-2}\nabla u_2\cdot\nabla\phi\,dx+\int\limits_{\mathbb{R}^N}\int\limits_{\mathbb{R}^N}\mathcal{A}\big(u_2(x,y)\big){\big(\phi(x)-\phi(y)\big)}\,d\mu=\int_{\Omega}\frac{f(x)}{u_2(x)^\delta}\phi(x)\,dx.
\end{equation}
Therefore, first choosing $\phi=(u_1-u_2)^+\in W_0^{1,p}(\Om)$ in \eqref{tstfnap1} and \eqref{tstfnap2} and then subtracting them, we have
\begin{equation}\label{tstfnap}
\begin{split}
&\int_{\Omega}\left\{|\nabla u_1|^{p-2}\nabla u_1-|\nabla u_1|^{p-2}\nabla u_1\right\}\nabla(u_1-u_2)^+\,dx\\
&\quad+\int\limits_{\mathbb{R}^N}\int\limits_{\mathbb{R}^N}\left\{\mathcal{A}\big(u_1(x,y)\big)-\mathcal{A}\big(u_2(x,y)\big)\right\}{\big((u_1-u_2)^+(x)-(u_1-u_2)^+(y)\big)}\,d\mu\\
&=\int_{\Omega}f(x)\left\{\frac{1}{u_1(x)^\delta}-\frac{1}{u_2(x)^\delta}\right\}(u_1-u_2)^+(x)\,dx\leq 0.
\end{split}
\end{equation}
Again, following the same arguments from the proof of \cite[Lemma $9$]{Ling}, we obtain
\begin{equation}\label{tstfnap3}
\int\limits_{\mathbb{R}^N}\int\limits_{\mathbb{R}^N}\left\{\mathcal{A}\big(u_1(x,y)\big)-\mathcal{A}\big(u_2(x,y)\big)\right\}{\big((u_1-u_2)^+(x)-(u_1-u_2)^+(y)\big)}\,d\mu\geq 0.
\end{equation}
Hence using \eqref{tstfnap3} in \eqref{tstfnap} and then applying Lemma \ref{AI}, we have $u_2\geq u_1$ in $\Om$. Similarly, we obtain $u_1\geq u_2$ in $\Om$. Hence, the result follows.
\end{proof} 

\begin{Remark}\label{corrmk}
 Corollary \ref{tstcor} extends Theorem \ref{thm4} to the case of $f\in L^1(\Omega)$ to obtain uniqueness in $W_0^{1,p}(\Om)$.
\end{Remark}

\begin{Lemma}\label{lemma1}
Let $n\in\mathbb{N}$. Then for every $\phi\in W_0^{1,p}(\Omega)$, we have
\begin{equation}\label{prop1}
\|u_n\|^{p}\leq\|\phi\|^{p}+p\int_{\Omega}\frac{(u_{n}-\phi)}{(u_{n}+\frac{1}{n})^\delta}f_{n}\,dx.
\end{equation}
Moreover, we have
\begin{equation}\label{nmon}
\|u_n\|\leq \|u_{n+1}\|\text{ for every }n\in\mathbb{N}.
\end{equation}
\end{Lemma}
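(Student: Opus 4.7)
The plan is to combine the weak formulation of the approximate problem $(\mathcal{A})$ satisfied by $u_n$ with the pointwise convexity inequality $|b|^p \ge |a|^p + p\,|a|^{p-2}a\cdot(b-a)$, i.e.
$$
p\,|a|^{p-2}a\cdot(a-b) \;\ge\; |a|^p - |b|^p,
$$
applied both to vectors in $\mathbb{R}^N$ (for the local part) and to scalars (for the nonlocal part).

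First, given $\phi\in W_0^{1,p}(\Omega)$, observe that $u_n - \phi \in W_0^{1,p}(\Omega)$ and that the right-hand side of $(\mathcal{A})$ is bounded by $n^{\delta+1}$, so $u_n-\phi$ is an admissible test function in the weak formulation for $u_n$ (either directly, or by a density argument approximating by $C_c^1(\Omega)$ functions). Inserting $u_n-\phi$ and rearranging yields
\begin{equation*}
\int_{\Omega}|\nabla u_n|^{p-2}\nabla u_n\cdot\nabla(u_n-\phi)\,dx + \int\limits_{\mathbb{R}^N}\int\limits_{\mathbb{R}^N}\mathcal{A}(u_n(x,y))\big((u_n-\phi)(x)-(u_n-\phi)(y)\big)\,d\mu = \int_\Omega \frac{f_n(u_n-\phi)}{(u_n+\tfrac{1}{n})^\delta}\,dx.
\end{equation*}

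Next, apply the convexity inequality pointwise: with $a=\nabla u_n(x)$, $b=\nabla\phi(x)$ for the local integrand, and with $a=u_n(x)-u_n(y)$, $b=\phi(x)-\phi(y)$ for the nonlocal integrand. Integrating over $\Omega$ and over $\mathbb{R}^N\times\mathbb{R}^N$ against $d\mu$ respectively, and summing, gives
\begin{equation*}
p\left[\int_{\Omega}|\nabla u_n|^{p-2}\nabla u_n\cdot\nabla(u_n-\phi)\,dx + \int\limits_{\mathbb{R}^N}\int\limits_{\mathbb{R}^N}\mathcal{A}(u_n(x,y))\big((u_n-\phi)(x)-(u_n-\phi)(y)\big)\,d\mu\right] \ge \|u_n\|^p - \|\phi\|^p,
\end{equation*}
where $\|\cdot\|$ is the mixed norm \eqref{Sobnorm}. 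Substituting the identity from the previous display immediately yields \eqref{prop1}.

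For the monotonicity \eqref{nmon}, I will specialize \eqref{prop1} to $\phi = u_{n+1}\in W_0^{1,p}(\Omega)$. This gives
$$
\|u_n\|^p \le \|u_{n+1}\|^p + p\int_\Omega \frac{f_n(u_n-u_{n+1})}{(u_n+\tfrac{1}{n})^\delta}\,dx.
$$
Since $f_n\ge 0$ and $(u_n+\tfrac{1}{n})^\delta>0$, and since by Lemma \ref{approx} we have $u_n\le u_{n+1}$ in $\Omega$, the integral term is $\le 0$, yielding $\|u_n\|\le \|u_{n+1}\|$.

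The only mildly delicate point is the density step that allows $u_n-\phi$ as a test function in $(\mathcal{A})$, but since the right-hand side $f_n/(u_n+\tfrac{1}{n})^\delta$ is bounded this is routine. The convexity inequalities themselves produce the correct mixed norm on the left precisely because the local and nonlocal contributions separate, so no interaction between them needs to be estimated.
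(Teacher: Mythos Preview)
Your proof is correct. The approach differs slightly in presentation from the paper's, so a short comparison is worthwhile.

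The paper argues via the minimization characterization: by Lemma~\ref{auxresult}, for fixed $h$ the equation $-\Delta_p v+(-\Delta_p)^s v=\frac{f_n}{(h^++\frac{1}{n})^\delta}$ has a unique solution, which is the minimizer of the functional $J(\phi)=\frac{1}{p}\|\phi\|^p-\int_\Omega\frac{f_n}{(h^++\frac{1}{n})^\delta}\phi\,dx$. Setting $h=v=u_n$ gives $J(u_n)\le J(\phi)$ for every $\phi\in W_0^{1,p}(\Omega)$, which is exactly \eqref{prop1}. You instead test the weak formulation of $(\mathcal{A})$ with $u_n-\phi$ and apply the pointwise convexity inequality $p\,|a|^{p-2}a\cdot(a-b)\ge|a|^p-|b|^p$ separately to the local and nonlocal integrands. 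These are two sides of the same coin: your pointwise inequality is precisely what makes $J$ convex and forces critical points to be minimizers, so you are in effect reproving the minimization property directly. Your route is marginally more self-contained, since it does not appeal to Lemma~\ref{auxresult} or to the existence of a minimizer; the paper's route makes the variational structure more explicit. The proof of \eqref{nmon} is identical in both.
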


\begin{proof}
Let $h\in W_0^{1,p}(\Om)$, then by Lemma \ref{auxresult}, there exists a unique solution $v\in W_0^{1,p}(\Om)$ to the problem
\begin{equation*}
-\Delta_p v+(-\Delta_p)^s v=\frac{f_{n}(x)}{(h^{+}+\frac{1}{n})^\delta},\,v>0\text{ in }\Omega,\,v=0\text{ in }\mathbb{R}^N\setminus\Omega.
\end{equation*}

Moreover, we observe that $v$ is a minimizer of the functional $J:W_0^{1,p}(\Om)\to\mathbb{R}$ given by
$$
J(\phi):=\frac{1}{p}\|\phi\|^{p}-\int_{\Omega}\frac{f_{n}}{(h^{+}+\frac{1}{n})^\delta}\phi\,dx.
$$

Therefore, for every $\phi\in W_0^{1,p}(\Om)$, we have $J(v)\leq J(\phi)$. Hence, we have
\begin{equation}\label{mineqn}
\frac{1}{p}\|v\|^{p}-\int_{\Omega}\frac{f_{n}}{(h^{+}+\frac{1}{n})^\delta}v\,dx
\leq \frac{1}{p}\|\phi\|^{p}-\int_{\Omega}\frac{f_{n}}{(h^{+}+\frac{1}{n})^\delta}\phi\,dx.
\end{equation}

Then the inequality \eqref{prop1} follows by choosing $v=h=u_{n}$ in the inequality \eqref{mineqn}. Now, choosing $\phi=u_{n+1}$ in \eqref{prop1} and using the monotone property $u_{n}\leq u_{n+1}$ from Lemma \ref{approx}, we obtain $\|u_{n}\|\leq \|u_{n+1}\|$.
\end{proof} 

\begin{Lemma}\label{strong}
Upto a subsequence $\{u_n\}$ converges to $u_\delta$ strongly in $W_0^{1,p}(\Omega).$
\end{Lemma}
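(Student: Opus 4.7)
The plan is to combine the uniform boundedness of $\{u_n\}$ in $W_0^{1,p}(\Omega)$ with the minimizer-type inequality from Lemma \ref{lemma1} to squeeze $\|u_n\|$ onto $\|u_\delta\|$, and then invoke the Radon--Riesz property. Concretely, Lemma \ref{unibddless} gives that $\{u_n\}$ is bounded in $W_0^{1,p}(\Omega)$ with respect to the mixed norm \eqref{Sobnorm}; by reflexivity I extract a subsequence (still denoted $\{u_n\}$) with $u_n \rightharpoonup w$ weakly in $W_0^{1,p}(\Omega)$. Lemma \ref{approx} together with Remark \ref{rmkapprox} supplies pointwise monotone convergence $u_n \nearrow u_\delta$ on $\Omega$, and Rellich--Kondrachov yields a.e.\ convergence along a further subsequence. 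Matching the weak and pointwise limits forces $w = u_\delta \in W_0^{1,p}(\Omega)$ (recall that $u_\delta$ is the weak solution provided by Theorem \ref{thm1}).

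Next I would apply Lemma \ref{lemma1} with the admissible test $\phi = u_\delta \in W_0^{1,p}(\Omega)$ to get
\begin{equation*}
\|u_n\|^p \leq \|u_\delta\|^p + p\int_{\Omega} \frac{u_n - u_\delta}{(u_n + \tfrac{1}{n})^\delta}\, f_n\, dx.
\end{equation*}
The key observation is that Remark \ref{rmkapprox} gives $u_n \leq u_\delta$ pointwise and $f_n \geq 0$, so the integral term is nonpositive. Hence one immediately obtains $\limsup_{n\to\infty} \|u_n\|^p \leq \|u_\delta\|^p$, with no limit calculation needed inside the integral.

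On the other hand, weak lower semicontinuity of the mixed norm \eqref{Sobnorm} under weak convergence gives $\|u_\delta\| \leq \liminf_{n\to\infty} \|u_n\|$. Combining the two bounds yields $\|u_n\| \to \|u_\delta\|$. Since $(W_0^{1,p}(\Omega), \|\cdot\|)$ embeds isometrically into the $L^p$-sum $L^p(\Omega;\mathbb{R}^N) \oplus_p L^p(\mathbb{R}^{2N}, d\mu)$ via $u \mapsto \bigl(\nabla u,\, |x-y|^{-(N/p+s)}(u(x)-u(y))\bigr)$, it is uniformly convex for $1 < p < \infty$; therefore the Radon--Riesz property applies and weak convergence plus norm convergence upgrades to strong convergence $u_n \to u_\delta$ in $W_0^{1,p}(\Omega)$.

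The main obstacle, modest here, is the identification $w = u_\delta$ of the weak limit, which needs the subsequence extraction via compact embedding to match weak and pointwise limits. The uniform convexity step is standard but deserves a sentence, since the mixed norm is not literally an $L^p$ norm; if one wished to avoid citing uniform convexity of the direct sum, an equivalent route is to combine $\|u_n\|^p \to \|u_\delta\|^p$ with the separate weak lower semicontinuity inequalities
\begin{equation*}
\int_\Omega |\nabla u_\delta|^p\,dx \leq \liminf_n \int_\Omega |\nabla u_n|^p\,dx, \qquad [u_\delta]_{s,p}^p \leq \liminf_n [u_n]_{s,p}^p,
\end{equation*}
to deduce that both inequalities are in fact equalities, and then apply classical uniform convexity in each $L^p$ factor to upgrade to strong convergence of $\nabla u_n$ in $L^p(\Omega;\mathbb{R}^N)$ and of the Gagliardo differences in $L^p(\mathbb{R}^{2N},d\mu)$ separately.
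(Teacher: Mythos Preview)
Your proof is correct and follows essentially the same route as the paper: use Lemma~\ref{lemma1} with $\phi=u_\delta$ together with $u_n\leq u_\delta$ to obtain $\|u_n\|\leq\|u_\delta\|$, combine this with weak lower semicontinuity to force $\|u_n\|\to\|u_\delta\|$, and conclude by uniform convexity. You supply more detail than the paper on identifying the weak limit and on the uniform convexity of the mixed norm, but the argument is the same.
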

\begin{proof}
Note that $u_n\leq u_\delta$. Thus, choosing $\phi=u_\delta$ in \eqref{prop1}, we obtain
$$
\|u_n\|\leq\|u_\delta\|,
$$
which gives from the monotone property \eqref{nmon} of Lemma \ref{lemma1}, 
\begin{equation}\label{lim1}
\lim_{n\to\infty}\|u_n\|\leq\|u_\delta\|.
\end{equation}
By Lemma \ref{unibddless}, the sequence $\{u_n\}$ is uniformly bounded in $W_0^{1,p}(\Om)$ and thus, upto a subsequence, $u_n\rightharpoonup u_\delta$ weakly in $W_0^{1,p}(\Omega)$. Therefore
\begin{equation}\label{lim2}
\|u_\delta\|\leq \lim_{n\to\infty}\|u_n\|.
\end{equation}

Hence from \eqref{lim1}, \eqref{lim2} and the uniform convexity of $W_0^{1,p}(\Om)$, the result follows.
\end{proof}

\begin{Corollary}\label{strmk}
As a consequence of Lemma \ref{strong}, upto a subsequence $\nabla u_n\to\nabla u_\delta$ pointwise almost everywhere in $\Om$, which coincides with Theorem \ref{grad}, provided $0<\delta<1$.
\end{Corollary}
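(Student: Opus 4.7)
The proof is essentially a one-line consequence of Lemma \ref{strong} combined with a standard fact from measure theory. My plan is as follows.

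First, I would invoke Lemma \ref{strong} to obtain (up to a subsequence) the strong convergence $u_n \to u_\delta$ in $W_0^{1,p}(\Omega)$ in the gradient norm \eqref{equinorm}. In particular, this yields
\[
\nabla u_n \to \nabla u_\delta \quad \text{strongly in } L^p(\Omega;\mathbb{R}^N).
\]

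Second, I would apply the classical fact that every strongly convergent sequence in $L^p(\Omega;\mathbb{R}^N)$ admits a subsequence converging pointwise almost everywhere in $\Omega$ (this is the standard converse-type statement for $L^p$ convergence, obtained by selecting a subsequence whose norms of consecutive differences decay geometrically and invoking the Borel--Cantelli lemma). Extracting such a subsequence gives
\[
\nabla u_n(x) \to \nabla u_\delta(x) \quad \text{for a.e. } x \in \Omega.
\]

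Finally, for the last clause, I would observe that Theorem \ref{grad} (the mixed gradient convergence theorem announced in the introduction and proved in Section 7) asserts pointwise a.e. convergence of $\nabla u_n$ to $\nabla u_\delta$ along a subsequence under the hypotheses of the problem $\mathcal{(S)}$; when $0<\delta<1$, this is precisely the conclusion obtained above, so the two results agree. There is no substantial obstacle here, since the corollary is merely recording that in the regime $0<\delta<1$ the strong $W_0^{1,p}$-convergence of Lemma \ref{strong} already delivers the pointwise gradient convergence for free, without requiring the more delicate Boccardo--Murat-type argument needed for $\delta \geq 1$ in Theorem \ref{grad}.
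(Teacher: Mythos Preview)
Your proposal is correct and matches the paper's approach exactly: the paper states Corollary \ref{strmk} without a separate proof, treating it as an immediate consequence of Lemma \ref{strong} via the standard fact that strong $L^p$-convergence of $\nabla u_n$ yields a pointwise a.e.\ convergent subsequence. Your explanation of the final clause (that this recovers Theorem \ref{grad} in the range $0<\delta<1$ without the Boccardo--Murat machinery) is also precisely the point the paper is making.
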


\begin{Lemma}\label{minprop}
Let $I_{\delta}:W_0^{1,p}(\Omega)\to\mathbb{R}$ be a functional defined by
$$
I_{\delta}(v):=\frac{1}{p}\|v\|^p-\frac{1}{1-\delta}\int_{\Omega}(v^{+})^{1-\delta}f\,dx.
$$
Then $u_{\delta}$ is a minimizer of the functional $I_{\delta}.$
\end{Lemma}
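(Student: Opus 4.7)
\medskip

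\noindent\textbf{Proof proposal for Lemma \ref{minprop}.} The plan is to derive from Lemma \ref{lemma1} (the linearized minimality of $u_n$) a genuine inequality $J_n(u_n)\le J_n(\phi)$ for a suitable approximating functional $J_n$, and then to pass to the limit using the strong $W_0^{1,p}$-convergence of $u_n$ to $u_\delta$ provided by Lemma \ref{strong}. More precisely, for $n\in\mathbb N$ I would define
\[
J_n(v):=\frac{1}{p}\|v\|^p-\frac{1}{1-\delta}\int_\Omega \bigl(v^{+}+\tfrac{1}{n}\bigr)^{1-\delta}f_n\,dx,\qquad v\in W_0^{1,p}(\Omega),
\]
so that $J_n\to I_\delta$ formally as $n\to\infty$ (up to an additive constant $\tfrac{1}{1-\delta}\int (1/n)^{1-\delta}f_n$ that vanishes in the limit).

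First I would reduce to the case $\phi\ge 0$ in $\Omega$. Indeed, $(\phi^+)^{1-\delta}=(\phi^+)^{1-\delta}$ and a direct case-by-case check shows that $\|\phi^+\|\le \|\phi\|$ for the mixed norm \eqref{Sobnorm} (the local part is standard and the nonlocal kernel $|a-b|^p$ is decreased under the map $(a,b)\mapsto (a^+,b^+)$). Hence $I_\delta(\phi^+)\le I_\delta(\phi)$, so it suffices to prove $I_\delta(u_\delta)\le I_\delta(\phi)$ for nonnegative $\phi\in W_0^{1,p}(\Omega)$.

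Next, the key step: convert \eqref{prop1} into $J_n(u_n)\le J_n(\phi)$. Since $0<\delta<1$, the function $g(t)=t^{1-\delta}/(1-\delta)$ is concave on $(0,\infty)$ with $g'(t)=t^{-\delta}$, so for all $a,b>0$,
\[
\frac{a^{1-\delta}-b^{1-\delta}}{1-\delta}\ge a^{-\delta}(a-b).
\]
Applying this pointwise with $a=u_n(x)+\tfrac{1}{n}$ and $b=\phi(x)+\tfrac{1}{n}$, multiplying by $f_n(x)\ge 0$ and integrating gives
\[
p\int_\Omega\frac{u_n-\phi}{(u_n+\tfrac{1}{n})^\delta}f_n\,dx\;\le\;\frac{p}{1-\delta}\int_\Omega\Bigl\{\bigl(u_n+\tfrac{1}{n}\bigr)^{1-\delta}-\bigl(\phi+\tfrac{1}{n}\bigr)^{1-\delta}\Bigr\}f_n\,dx.
\]
Substituting this bound into \eqref{prop1} and rearranging yields exactly $J_n(u_n)\le J_n(\phi)$.

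Finally I would pass to the limit as $n\to\infty$. Lemma \ref{strong} gives $\|u_n\|^p\to\|u_\delta\|^p$. For the singular integrals, $u_n\nearrow u_\delta$ and $f_n\nearrow f$ pointwise a.e.\ in $\Omega$, and Lemma \ref{approx} yields $u_n\le u_\delta$, so that
\[
\bigl(u_n+\tfrac{1}{n}\bigr)^{1-\delta}f_n\le (u_\delta+1)^{1-\delta}f,\qquad \bigl(\phi+\tfrac{1}{n}\bigr)^{1-\delta}f_n\le (\phi+1)^{1-\delta}f.
\]
These dominants are integrable since $f\in L^m(\Omega)$ with $m=(p^\ast/(1-\delta))'$ (for $1<p<N$, and analogously in the other ranges of $p$), and H\"older's inequality together with Lemma \ref{embedding} bounds $\int (u_\delta+1)^{1-\delta}f$ by $\|f\|_{L^m}\|u_\delta+1\|_{L^{p^\ast}}^{1-\delta}<\infty$, and likewise for $\phi$. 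Dominated convergence then gives $J_n(u_n)\to I_\delta(u_\delta)$ and $J_n(\phi)\to I_\delta(\phi)$, so $I_\delta(u_\delta)\le I_\delta(\phi)$ as desired.

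The main obstacle, conceptually, is the first step: Lemma \ref{lemma1} only provides a \emph{linear} comparison (with the singular weight $(u_n+1/n)^{-\delta}f_n$ frozen along $u_n$), whereas minimality of $I_\delta$ is a \emph{nonlinear} statement about the primitive $(v^+)^{1-\delta}f$. The concavity inequality above is what bridges the gap; after that, the limit is a routine monotone/dominated convergence exercise, with the only delicate point being the verification that the uniform dominants belong to $L^1(\Omega)$, which is precisely where the definition \eqref{m} of the exponent $m$ is used.
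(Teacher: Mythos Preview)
Your argument is correct and takes a genuinely different route from the paper's. The paper defines the approximating functional $I_n(v)=\tfrac{1}{p}\|v\|^p-\int_\Omega G_n(v)f_n\,dx$ and shows that $u_n$ minimizes it by an existence--uniqueness argument: it proves $I_n$ is coercive, bounded below and $C^1$, extracts a minimizer $v_n$, checks $v_n\ge 0$, observes that $v_n$ then solves $(\mathcal A)$, and invokes the uniqueness part of Lemma~\ref{approx} to conclude $v_n=u_n$. You bypass this entire step: starting from the linearized comparison \eqref{prop1} in Lemma~\ref{lemma1}, you use the concavity inequality $a^{-\delta}(a-b)\le (a^{1-\delta}-b^{1-\delta})/(1-\delta)$ (with $a=u_n+\tfrac{1}{n}$, $b=\phi+\tfrac{1}{n}$) to upgrade it directly to $J_n(u_n)\le J_n(\phi)$. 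Your approach is more economical in that it does not re-run a variational existence argument or appeal to the uniqueness of the approximate problem; the paper's approach, on the other hand, makes the variational structure of $(\mathcal A)$ fully transparent and does not rely on Lemma~\ref{lemma1}. The passage to the limit and the reduction to $\phi\ge 0$ via $\|\phi^+\|\le\|\phi\|$ are essentially the same in both proofs.
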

\begin{proof}
We define the auxiliary functional $I_{n}:W_0^{1,p}(\Omega)\to\mathbb{R}$ by
$$
I_{n}(v):=\frac{1}{p}\|v\|^p-\int_{\Omega}G_n(v)f_{n}\,dx,
$$
where
$$
G_n(t):=\frac{1}{1-\delta}\Big(t^{+}+\frac{1}{n}\Big)^{1-\delta}-\Big(\frac{1}{n}\Big)^{-\delta}t^{-}.
$$
We observe that $I_{n}$ is bounded below, coercive and a $C^1$ functional. Hence, $I_{n}$ consists a minimizer $v_{n}\in W_0^{1,p}(\Omega)$ such that
$$
\langle I_{n}{'}(v_{n}),\phi\rangle=0,\text{ for all }\phi\in W_0^{1,p}(\Omega).
$$
Now the fact $I_n(v_n)\leq I_n(v_n^{+})$ gives $v_n\geq 0$ in $\Om$. Therefore, $v_{n}$ solves the approximated problem $\mathcal{(A)}$. By the uniqueness result in Lemma \ref{approx}, we get $u_{n}=v_{n}$ and so $u_{n}$ is a minimizer of $I_{n}$. Therefore, for every $v\in W_0^{1,p}(\Om)$, we have
\begin{equation}\label{min}
I_n(u_n)\leq I_n(v^+).
\end{equation}
Since $u_n\leq u_\delta$, by the Lebesgue dominated convergence theorem, we have
$$
\lim_{n\to\infty}\int_{\Omega}G_n(u_{n})f_{n}\,dx=\frac{1}{1-\delta}\int_{\Omega}u_{\delta}^{1-\delta}f\,dx.
$$
Moreover, by Lemma \ref{strong}, we have
$$
\lim_{n\to\infty}\|u_{n}\|=\|u_\delta\|.
$$
Hence, we have
\begin{equation}\label{newlim2}
\lim_{n\to\infty}I_{n}(u_{n})=I_\delta(u_\delta).
\end{equation}
Furthermore, for every $v\in W_0^{1,p}(\Omega)$, we get
\begin{equation}\label{lim3}
\lim_{n\to\infty}\int_{\Omega}G_n(v^{+})f_{n}\,dx=\frac{1}{1-\delta}\int_{\Omega}(v^{+})^{1-\delta}f\,dx.
\end{equation}
Noting $\|v^+\|\leq\|v\|$ and using \eqref{newlim2} and \eqref{lim3} in \eqref{min}, we arrive at $I_\delta(u_\delta)\leq I_\delta(v)$, for all $v\in W_0^{1,p}(\Omega)$. Hence, the result follows.
\end{proof}

\section{Proof of the main results}
\subsection{Proof of the existence results}
\textbf{Proof of Theorem \ref{thm1}:} Let $f\in L^m(\Om)\setminus\{0\}$, where $m$ is given by \eqref{m}. Then, by Lemma \ref{unibddless}, the sequence $\{u_n\}$ is uniformly bounded in $W_0^{1,p}(\Om)$. Hence the pointwise limit $u\in W_0^{1,p}(\Om)$ and thus belong to $L^{p-1}(\Om)$. Moreover, using Theorem \ref{grad}, upto a subsequence
$$
\nabla u_n\to \nabla u \text{ pointwise almost everywhere in }\Om.
$$
As a consequence, for every $\phi\in C_c^1(\Om)$, we have
\begin{equation}\label{loclim}
\lim_{n\to\infty}\int_{\Om}|\nabla u_n|^{p-2}\nabla u_n\nabla\phi\,dx=\int_{\Om}|\nabla u|^{p-2}\nabla u\nabla\phi\,dx.
\end{equation}
Since $\phi\in C_c^1(\Om)$ and $\{u_n\}$ is uniformly bounded in $W_0^{1,p}(\Om)$, by Lemma \ref{locnon1}
$$
\frac{|u_n(x)-u_n(y)|^{p-2}\big(u_n(x)-u_n(y)\big)}{|x-y|^\frac{N+ps}{p'}}\in L^{p'}(\mathbb{R}^N\times\mathbb{R}^N),
$$
is uniformly bounded and
$$
\frac{\phi(x)-\phi(y)}{|x-y|^\frac{N+ps}{p}}\in L^p(\mathbb{R}^N\times\mathbb{R}^N).
$$
Therefore, by the weak convergence, we have
\begin{equation}\label{nonloclim}
\lim_{n\to\infty}\int\limits_{\mathbb{R}^N}\int\limits_{\mathbb{R}^N}\mathcal{A}\big(u_n(x,y)\big)\big(\phi(x)-\phi(y)\big)\,d\mu=\int\limits_{\mathbb{R}^N}\int\limits_{\mathbb{R}^N}\mathcal{A}\big(u(x,y)\big)\big(\phi(x)-\phi(y)\big)\,d\mu.
\end{equation}
By Lemma \ref{approx}, $u_n\geq C(\omega)>0$ on a set $\mathrm{supp}\,\phi=\omega$ for some constant $C(\omega)>0$, independent of $n$. Therefore, for every $\phi\in C_c^{1}(\Om)$, we have
$$
\left|\frac{f_n }{\big(u_n+\frac{1}{n}\big)^\delta}\phi\right|\leq\frac{\|\phi\|_{L^\infty(\Om)}}{C(\omega)^\delta}|f|\text{ in }\Om.
$$
By the Lebesgue dominated convergence theorem, we have
\begin{equation}\label{slim}
\lim_{n\to\infty}\int_{\Om}\frac{f_n}{\big(u_n+\frac{1}{n}\big)^\delta}\phi\,dx=\int_{\Om}\frac{f}{u^\delta}\phi\,dx.
\end{equation}
Using \eqref{loclim}, \eqref{nonloclim} and \eqref{slim} in $(\mathcal{A})$ and noting Remark \ref{bcrmk}, we obtain $u$ is a weak solution of the problem $(\mathcal{S})$. \qed

\textbf{Proof of Theorem \ref{thm2}:} By Lemma \ref{unibdequal}, the sequence $\{u_n\}$ is uniformly bounded in $W_0^{1,p}(\Om)$. Now, proceeding as in the proof of Theorem \ref{thm1}, the result follows. \qed

\textbf{Proof of Theorem \ref{thm3}:} By Lemma \ref{unibddgrt}, the sequence $\left\{u_n^\frac{\delta+p-1}{p}\right\}$ is uniformly bounded in $W_0^{1,p}(\Om)$. Thus $u^\frac{\delta+p-1}{p}\in W_0^{1,p}(\Om)$. This also gives $u\in L^{p-1}(\Om)$. Moreover, by Lemma \ref{unibddgrt}, the sequence $\{u_n\}$ is uniformly bounded in $W_{\mathrm{loc}}^{1,p}(\Om)$. Hence $u\in W^{1,p}_{\mathrm{loc}}(\Om)$. Now following the lines of the proof of Theorem \ref{thm1}, for every $\phi\in C_c^{1}(\Om)$, we obtain
\begin{equation}\label{llim}
\lim_{n\to\infty}\int_{\Om}|\nabla u_n|^{p-2}\nabla u_n\nabla\phi\,dx=\int_{\Om}|\nabla u|^{p-2}\nabla u\nabla\phi\,dx\text{ and }
\end{equation}

\begin{equation}\label{rlim}
\lim_{n\to\infty}\int_{\Om}\frac{f_n}{\big(u_n+\frac{1}{n}\big)^\delta}\phi\,dx=\int_{\Om}\frac{f}{u^\delta}\phi\,dx.
\end{equation}
For the nonlocal part, following the same arguments as in the proof of \cite[Theorem $3.6$]{Caninononloc}, for every $\phi\in C_c^{1}(\Om)$, we have
\begin{equation}\label{nlim}
\lim_{n\to\infty}\int\limits_{\mathbb{R}^N}\int\limits_{\mathbb{R}^N}\mathcal{A}\big(u_n(x,y)\big)\big(\phi(x)-\phi(y)\big)\,d\mu=\int\limits_{\mathbb{R}^N}\int\limits_{\mathbb{R}^N}\mathcal{A}\big(u(x,y)\big)\big(\phi(x)-\phi(y)\big)\,d\mu.
\end{equation}
Hence, from \eqref{llim}, \eqref{rlim} and \eqref{nlim} along with Remark \ref{bcrmk} the result follows. \qed

\textbf{Proof of Theorem \ref{regthm}:} By Lemma \ref{unibddreg}, the sequence $\{u_n\}$ is uniformly bounded in $L^\infty(\Om)$. Hence, $u\in L^\infty(\Om)$. \qed

\subsection{Proof of the uniqueness result}
\textbf{Proof of Theorem \ref{thm4}:} Let $u$ and $v$ are weak solutions of $(\mathcal{S})$ under the zero Dirichlet boundary condition according to the Definition \ref{bc}. Then, it follows that $u$ is a weak subolution such that $u\leq 0$ on $\partial\Om$ and $v$ is a weak supersolution of $(\mathcal{S})$. Therefore, by Lemma \ref{cp}, we obtain $u\leq v$ in $\Om$. Similarly, we get $v\leq u$ in $\Om$. Hence $u\equiv v$ in $\Om$. \qed

\subsection{Proof of the symmetry result}
\textbf{Proof of Theorem \ref{sym}:} Without loss of generality, we assume that $\Om$ is symmetric with respect to the $x_1$ direction and $f(x_1,x')=f(-x_1,x')$ where $x'\in\mathbb{R}^{N-1}$. Then setting $v(x_1,x')=u(-x_1,x')$, we observe that $v$ is again a weak solution of the problem $(\mathcal{S})$. Therefore, by Theorem \ref{thm4}, we obtain $u(x)=v(x)$. Hence the result follows. \qed

\subsection{Proof of the mixed Sobolev inequalities}
\textbf{Proof of Theorem \ref{thm5}:}
Here we consider the norm of a function $v$ in $W_0^{1,p}(\Om)$ defined by \eqref{norm} as follows: 
$$
\|v\|=\left(\int_{\Omega}|\nabla v|^p\,dx+\int\limits_{\mathbb{R}^N}\int\limits_{\mathbb{R}^N}\frac{|v(x)-v(y)|^p}{|x-y|^{N+ps}}\,dx\,dy\right)^\frac{1}{p}.
$$
\begin{enumerate}
\item[(a)] To obtain the result, it is enough to prove that
$$
\mu(\Omega)=\inf_{v\in S_{\delta}}\|v\|^{p}=\|u_\delta\|^{\frac{p(1-\delta-p)}{1-\delta}},
$$
where
$$
S_{\delta}:=\left\{v\in W_0^{1,p}(\Omega):\int_{\Omega}|v|^{1-\delta}f\,dx=1\right\}.
$$
To this end, we consider $V_\delta=\tau_\delta u_\delta\in S_{\delta}$, where
$$
\tau_\delta=\left(\int_{\Omega}u_\delta^{1-\delta}f\,dx\right)^{-\frac{1}{1-\delta}}.
$$
By Lemma \ref{testfn}, choosing $u_\delta$ as a test function in $(\mathcal{W})$, we get
\begin{equation}\label{useful1}
\|u_\delta\|^p=\int_{\Omega}u_\delta^{1-\delta}f\,dx.
\end{equation}
Now using \eqref{useful1} we have
\begin{multline}
\label{extremal}
\|V_\delta\|^{p}=\int_{\Om}|\nabla V_\delta|^p\,dx+\int\limits_{\mathbb{R}^N}\int\limits_{\mathbb{R}^N}\frac{|V_{\delta}(x)-V_\delta(y)|^p}{|x-y|^{N+ps}}\,dx dy\\
=(\tau_\delta)^{p}\|u_\delta\|^{p}
=\Bigg(\int_{\Omega}u_\delta^{1-\delta}f\,dx\Bigg)^{-\frac{p}{1-\delta}}\|u_\delta\|^{p}=\|u_\delta\|^\frac{p(1-\delta-p)}{1-\delta}.
\end{multline}
Let $v\in S_{\delta}$ and $\lambda=\|v\|^{-\frac{p}{p+\delta-1}}$. By Lemma \ref{minprop}, $u_\delta$ minimizes the functional $I_\delta$, which gives $I_\delta(u_\delta)\leq I_\delta(\lambda|v|)$. Therefore, using \eqref{useful1}, we have 
\begin{multline*}
\Big(\frac{1}{p}-\frac{1}{1-\delta}\Big)\|u_\delta\|^{p}=I_\delta(u_\delta)\leq I_\delta(\lambda|v|)=\frac{\lambda^{p}}{p}\||v|\|^{p}-\frac{\lambda^{1-\delta}}{1-\delta}\int_{\Om}|v|^{1-\delta}f\,dx\\
=\frac{\lambda^{p}}{p}\||v|\|^{p}-\frac{\lambda^{1-\delta}}{1-\delta}
\leq\frac{\lambda^{p}}{p}\|v\|^{p}-\frac{\lambda^{1-\delta}}{1-\delta}
=\Big(\frac{1}{p}-\frac{1}{1-\delta}\Big)\|v\|^\frac{p(\delta-1)}{\delta+p-1}.
\end{multline*}
Since $v\in S_{\delta}$ is arbitrary, the above estimate gives
\begin{equation}\label{infprop}
\|u_{\delta}\|^\frac{p(1-\delta-p)}{1-\delta}\leq\inf_{v\in S_{\delta}}\|v\|^{p}.
\end{equation}
As $V_{\delta}\in S_{\delta}$, from \eqref{extremal} and \eqref{infprop}, the result follows. \qed
\item[(b)] Suppose the mixed Sobolev inequality \eqref{inequality2} holds. If $C>\mu(\Omega)$, by (a) and \eqref{extremal}, we have
$$
C\Big(\int_{\Omega}V_{\delta}^{1-\delta}f\,dx\Big)^\frac{p}{1-\delta}>\|V_{\delta}\|^p,
$$
which contradicts our assumption \eqref{inequality2}. Conversely, assume that
$$
C\leq\mu(\Omega)=\inf_{v\in S_{\delta}}\|v\|^{p}\leq\|w\|^{p},
$$ for all $w\in S_{\delta}$. Observe that the claim directly follows if $v\equiv 0$. Thus we only deal with the case of $v\in W_0^{1,p}(\Omega)\setminus\{0\}$ for which we have
$$
w=\Bigg(\int_{\Omega}|v|^{1-\delta}f\,dx\Bigg)^{-\frac{1}{1-\delta}}v\in S_{\delta}.
$$
Therefore, we have
$$
C\leq\Bigg(\int_{\Omega}|v|^{1-\delta}f\,dx\Bigg)^{-\frac{p}{1-\delta}}\|v\|^p, 
$$
which proves the result.
\item[(c)] From \eqref{extremal} in $(a)$, we know that $\mu(\Om)=\|V_\delta\|^p$. Let $v\in S_\delta$ be such that $\mu(\Om)=\|v\|^p$. First, we claim that $v$ has a constant sign in $\Om$. Indeed if $v$ changes sign in $\Om$, then using the fact that 
$$
\big||v(x)|-|v(y)|\big|<|v(x)-v(y)|,
$$
we have
\begin{equation}\label{c1}
\||v|\|^p<\|v\|^p.
\end{equation}
Moreover, since $|v|\in S_\delta$, we have 
$$
\|v\|^p=\mu(\Om)\leq \||v|\|^p,
$$
which contradicts \eqref{c1}. Hence, the claim follows. Without loss of generality, we assume that $v\geq 0$ in $\Om$. Using the fact that $V_\delta, v\geq 0$ and $0<1-\delta<1$, we have
\begin{multline*}
g=\left(\int_{\Om}\Big(\frac{v}{2}+\frac{V_\delta}{2}\Big)^{1-\delta}f\,dx\right)^\frac{1}{1-\delta}
=\left(\int_{\Om}\Big(\frac{v}{2}f^\frac{1}{1-\delta}+\frac{V_\delta}{2}f^\frac{1}{1-\delta}\Big)^{1-\delta}\,dx\right)^\frac{1}{1-\delta}\\
\geq\left(\int_{\Om}\left(\frac{v}{2}\right)^{1-\delta}f\,dx\right)^\frac{1}{1-\delta}+\left(\int_{\Om}\left(\frac{V_\delta}{2}\right)^{1-\delta}f\,dx\right)^\frac{1}{1-\delta}\\
\geq\frac{1}{2}\left(\int_{\Om}v^{1-\delta}f\,dx\right)^\frac{1}{1-\delta}+\frac{1}{2}\left(\int_{\Om}V_{\delta}^{1-\delta}f\,dx\right)^\frac{1}{1-\delta}
=\frac{1}{2}+\frac{1}{2}=1.
\end{multline*}
Then, since $h=\frac{v+V_\delta}{2g}\in S_\delta$, we obtain
$$
\mu(\Om)\leq\|h\|^p\leq\frac{1}{g^p}\left\|\frac{v+V_\delta}{2}\right\|^p\leq\frac{\mu(\Om)}{g^p}\leq\mu(\Om).
$$
Hence, we get $g=1$ and therefore, $\frac{v+V_\delta}{2}\in S_\delta$ and 
$$
\mu(\Om)^\frac{1}{p}=\left\|\frac{v+V_\delta}{2}\right\|=\frac{\|v\|}{2}+\frac{\|V_\delta\|}{2}.
$$
Since the norm 
$
v\to \|v\|
$
is strictly convex, we have $v=V_\delta$. So, $\mu(\Om)=\|v\|^p$ for some $v\in S_\delta$, if and only if $v=V_\delta$ or $-V_\delta$. Thus, if \eqref{sim} holds for some $w\in W_0^{1,p}(\Om)\setminus\{0\}$, we have $\gamma w\in S_\delta$, where
$$
\gamma=\left(\int_{\Om}|w|^{1-\delta}f\,dx\right)^{-\frac{1}{1-\delta}}.
$$
Thus $w=\gamma^{-1} V_\delta$ or $-\gamma^{-1}V_\delta$. Since $V_\delta=\tau_\delta u_\delta$, the result follows. 
\qed
\end{enumerate}
\textbf{Proof of Theorem \ref{nsthm}:}
Suppose the inequality \eqref{inequality2} holds. Then, by Lemma \ref{nsbdd}, the sequence $\{u_n\}$ is uniformly bounded in $W_0^{1,p}(\Om)$. Now, proceeding with the exact arguments as in the proof of Theorem \ref{thm1}, the problem $\mathcal{(S)}$ admits a weak solution in $W_0^{1,p}(\Om)$. Conversely, let $u\in W_0^{1,p}(\Om)$ be a weak solution of the problem $\mathcal{(S)}$ and $\|\cdot\|$ be given by \eqref{norm}. Applying Lemma \ref{testfn} we choose $u$ as a test function in $\mathcal{(S)}$ and obtain
\begin{equation}\label{tst2}
\|u\|^p=\int_{\Om}u^{1-\delta}f\,dx.
\end{equation}
Again applying Lemma \ref{testfn}, first we choose $|v|\in W_0^{1,p}(\Om)$ as a test function in $\mathcal{(S)}$ and then using H\"older's inequality, we have
\begin{equation}\label{tst}
\int_{\Om}|v|u^{-\delta}f\,dx\leq C\|u\|^{p-1}\|v\|,
\end{equation}
for some positive constant $C$. Therefore, for every $v\in W_0^{1,p}(\Om)$, using \eqref{tst2} and \eqref{tst} along with H\"older's inequality, we obtain
\begin{multline*}
\int_{\Om}|v|^{1-\delta}f\,dx=\int_{\Om}\left(|v|u^{-\delta}f\right)^{1-\delta}\left(u^{1-\delta}f\right)^\delta\,dx\\
\leq\left(\int_{\Om}|v|u^{-\delta}f\,dx\right)^{1-\delta}\left(\int_{\Om}u^{1-\delta}f\,dx\right)^{\delta}
\leq C\|u\|^{p+\delta-1}\|v\|^{1-\delta},
\end{multline*}
which proves the inequality \eqref{inequality2}. \qed \\
\textbf{Proof of Theorem \ref{nsthm1}:} Noting \cite[Lemma $4.1$]{Canino} and then, proceeding along the lines of the proof of Theorem \ref{nsthm}, the result follows. \qed \\
\textbf{Proof of Theorem \ref{nsthm2}:} Noting \cite[Proposition $2.3$]{Caninononloc} and then, proceeding along the lines of the proof of Theorem \ref{nsthm}, the result follows. \qed
\section{Appendix}
In this section, we obtain the pointwise convergence of the gradient of the approximate solutions $\{u_n\}$ found in Lemma \ref{approx}. 
\begin{Theorem}\label{grad}\textbf{(Gradient convergence theorem in the mixed case)}
Let $1<p<\infty$. Suppose $\{u_n\}$ is the sequence of approximate solutions for the problem $(\mathcal{A})$ given by Lemma \ref{approx} and $u$ is the pointwise limit of $\{u_n\}$. For $0<\delta<1$, let $f\in L^m(\Om)\setminus\{0\}$ be nonnegative, where $m$ is given by \eqref{m} and for $\delta\geq 1$, let $f\in L^1(\Om)\setminus\{0\}$ be nonnegative respectively. Then, upto a subsequence, $\nabla u_n\to \nabla u$ pointwise almost everywhere in $\Om$.
\end{Theorem}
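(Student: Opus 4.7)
The plan is to adapt the classical Boccardo--Murat gradient convergence scheme \cite{BocMu} to the mixed local/nonlocal setting. In each of the three regimes for $\delta$, Lemma \ref{unibddless}, \ref{unibdequal} or \ref{unibddgrt} yields uniform boundedness of $\{u_n\}$ in $W^{1,p}_{\mathrm{loc}}(\Omega)$, so along a not-relabeled subsequence $u_n\to u$ a.e.\ in $\Omega$ and $\nabla u_n\rightharpoonup\nabla u$ weakly in $L^p_{\mathrm{loc}}(\Omega)$. Moreover Lemma \ref{approx} delivers the uniform interior positivity $u_n\ge C(\omega)>0$ on each $\omega\Subset\Omega$, which keeps the singular right-hand side under control on compact subsets. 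Fix $\omega\Subset\omega'\Subset\Omega$ and a cutoff $\psi\in C_c^\infty(\omega')$ with $\psi\equiv 1$ on $\omega$ and $0\le\psi\le 1$.

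For each $\eta>0$ I would test $(\mathcal{A})$ with $\varphi_n:=\psi\,T_\eta(u_n-u)\in W_0^{1,p}(\Omega)\cap L^\infty(\Omega)$, producing an identity $L_n+N_n=R_n$ among the local, nonlocal and singular contributions. The term $R_n$ vanishes in the limit by dominated convergence, since on $\operatorname{supp}\psi$ the factor $f_n/(u_n+1/n)^\delta$ is dominated by the integrable function $f/C(\omega')^\delta$ while $T_\eta(u_n-u)\to 0$ a.e.\ with $|T_\eta|\le\eta$. The local term splits as
$$L_n=\int_\Omega\psi\,|\nabla u_n|^{p-2}\nabla u_n\cdot\nabla T_\eta(u_n-u)\,dx+\int_\Omega T_\eta(u_n-u)\,|\nabla u_n|^{p-2}\nabla u_n\cdot\nabla\psi\,dx,$$
and the second summand tends to $0$ by Vitali, as $|\nabla u_n|^{p-1}$ is bounded in $L^{p'}(\omega')$ while $T_\eta(u_n-u)\to 0$ in every $L^q(\omega')$. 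Decomposing the nonlocal term $N_n$ in the same spirit, namely separating a $\psi$-weighted symmetric double difference of $T_\eta(u_n-u)$ from a cutoff remainder generated by $\psi(x)-\psi(y)$, the symmetric piece is nonnegative by the Browder--Minty monotonicity already exploited in \eqref{monnon}, while the remainder vanishes in the limit because $T_\eta(u_n-u)\to 0$ strongly in $L^q$ and the Gagliardo seminorms of $\{u_n\}$ stay bounded.

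Combining these estimates yields $\limsup_{n\to\infty}\int_\Omega\psi\,|\nabla u_n|^{p-2}\nabla u_n\cdot\nabla T_\eta(u_n-u)\,dx\le 0$; subtracting the analogous integral with $\nabla u_n$ replaced by $\nabla u$, which tends to $0$ by the weak convergence $\nabla T_\eta(u_n-u)\rightharpoonup 0$ in $L^p_{\mathrm{loc}}$, and invoking Lemma \ref{AI}, the nonnegative quantity
$$g_n:=\psi\,\bigl(|\nabla u_n|^{p-2}\nabla u_n-|\nabla u|^{p-2}\nabla u\bigr)\cdot\nabla(u_n-u)\,\chi_{\{|u_n-u|<\eta\}}$$
converges to zero in $L^1(\Omega)$. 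Since $u_n\to u$ a.e., the truncation indicator eventually equals $1$ at a.e.\ point of $\omega$; extracting a further subsequence on which $g_n\to 0$ pointwise and applying the strict monotonicity in Lemma \ref{AI} forces $\nabla u_n\to\nabla u$ a.e.\ in $\omega$, and a diagonal argument over a countable exhaustion of $\Omega$ completes the proof. The main obstacle in this scheme is the nonlocal term $N_n$: the cutoff $\psi$ fails to localize the Gagliardo kernel, so long-range interactions leak outside $\omega'$, but the Browder--Minty identity \eqref{monnon} allows one to discard the troublesome symmetric part as a favourably signed quantity, leaving only a cutoff remainder that is killed by the strong $L^q$-convergence of $T_\eta(u_n-u)$.
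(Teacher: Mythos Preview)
Your overall architecture is exactly the paper's: test $(\mathcal{A})$ with a cutoff times a truncation of $u_n-u$, split into local/nonlocal/singular pieces, throw away the nonlocal monotone part by sign, and kill the remainders. Two remarks, one in your favour and one against.

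\emph{In your favour.} Your observation that $R_n\to 0$ by dominated convergence (since $|T_\eta|\le\eta$, $T_\eta(u_n-u)\to 0$ a.e., and $f_n/(u_n+\tfrac1n)^\delta\le f/C(\omega')^\delta\in L^1$ on $\operatorname{supp}\psi$) is sharper than what the paper does: there the singular term is only bounded by $C\mu$, which forces a second step in which one shows $\int_K e_n^\gamma\to 0$ for some $\gamma\in(0,1)$ via H\"older and then lets $\mu\to 0$. Your route bypasses that $e_n^\gamma$ argument entirely and leads directly to $g_n\to 0$ in $L^1$.

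\emph{The gap.} Your treatment of the nonlocal term $N_n$ is too optimistic in two respects. First, the ``symmetric piece'' is only nonnegative once you have replaced $\mathcal{A}(u_n)$ by $\mathcal{A}(u_n)-\mathcal{A}(u)$; doing so spawns two further remainder terms carrying $\mathcal{A}(u)$ (this is the paper's four-term split $J_1+J_2+J_3+J_4$), and those also have to be shown to vanish, one via weak convergence of $\varphi_n$ and one via Vitali. Second, and more seriously, for $\delta>1$ you invoke that ``the Gagliardo seminorms of $\{u_n\}$ stay bounded'', but Lemma~\ref{unibddgrt} only gives $u_n$ bounded in $W^{1,p}_{\mathrm{loc}}(\Omega)$, not a global Gagliardo bound, and the cutoff remainder is a genuinely nonlocal integral over $\omega'=\mathbb{R}^{2N}\setminus(\omega^c\times\omega^c)$ that reaches outside any compact set. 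The paper closes this by invoking Lemma~\ref{Cnlemma} with $q=\tfrac{\delta+p-1}{p}$: on $\omega'$ at least one of $x,y$ lies in $\omega$ where $u_n\ge C(\omega)$, so $|u_n(x)-u_n(y)|^p\le C(\omega)^{1-\delta}\bigl|u_n(x)^{(\delta+p-1)/p}-u_n(y)^{(\delta+p-1)/p}\bigr|^p$, and it is the power $u_n^{(\delta+p-1)/p}$ that is uniformly bounded in $W_0^{1,p}(\Omega)$ and hence (via Lemma~\ref{locnon1}) in the global Gagliardo seminorm. Without this algebraic step your H\"older estimate on the cutoff remainder has no uniform control when $\delta>1$.
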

\begin{proof}
By the given hypothesis, for $0<\delta\leq 1$, by Lemma \ref{unibddless} and \ref{unibdequal}, 
\begin{equation}\label{gradlesseq}
\text{the sequence } \{u_n\} \text{ is uniformly bounded in } W_0^{1,p}(\Om).
\end{equation} 
If $\delta>1$, by Lemma \ref{unibddgrt}, the sequences
\begin{equation}\label{gradgrt}
\begin{split}
\{u_n\} \text{ and } \left\{u_n^\frac{\delta+p-1}{p}\right\} \text{ are uniformly bounded in } W^{1,p}_{\mathrm{loc}}(\Om) \text{ and } W_0^{1,p}(\Om) \text{ respectively.}
\end{split}
\end{equation}
Then, we have
\begin{equation}\label{consewk}
\begin{split}
u_n&\rightharpoonup u \text{ weakly in }W^{1,p}_{\mathrm{loc}}(\Om)\text{ and }
\end{split}
\end{equation}
\begin{equation}\label{consest}
\begin{split}
u_n&\to u \text{ strongly in }L^p_{\mathrm{loc}}(\Om).
\end{split}
\end{equation}
By Remark \ref{rmkapprox}, for all $n\in\mathbb{N}$, we obtain
\begin{align}\label{consegeq}
u&\geq u_n\text{ in }\mathbb{R}^N.
\end{align}
Also, we note that the function 
\begin{equation}\label{consemon}
|t|^{p-2}t\text{ is monotone increasing over }\mathbb{R}.
\end{equation}
Next, we prove the result in the following two steps.\\
\textbf{Step 1.} Let $K\subset\Omega$ be a compact set and consider a function $\phi_K\in C_c^{1}(\Omega)$ such that $\mathrm{supp}\,\phi_K=\omega$, $0\leq\phi_K\leq 1$ in $\Omega$ and $\phi_K\equiv 1$ in $K$. For $\mu>0$, we define the truncated function $T_{\mu}:\mathbb{R}\to\mathbb{R}$ by
\begin{equation}\label{trun}
T_{\mu}(s)=
\begin{cases}
s,\text{ if }|s|\leq\mu,\\
\mu\frac{s}{|s|},\text{ if }|s|>\mu.
\end{cases}
\end{equation}
Choosing $v_n=\phi_K T_{\mu}\big((u_n-u)\big)\in W_0^{1,p}(\Omega)$ as a test function in $(\mathcal{A})$, we obtain
\begin{equation}\label{gs1eqn1}
\begin{split}
I+J=R,
\end{split}
\end{equation}
where
$$
I=\int_{\Omega}|\nabla u_n|^{p-2}\nabla u_n\nabla v_n\,dx,
$$
$$
J=\int\limits_{\mathbb{R}^N}\int\limits_{\mathbb{R}^N}\mathcal{A}\big(u_n(x,y)\big)\big(v_n(x)-v_n(y)\big)\,d\mu\text{ and }R=\int_{\Omega}\frac{f_n}{\big(u_n+\frac{1}{n}\big)^\delta} v_n\,dx.
$$
\textbf{Estimate of $I$:} 
We have
\begin{equation}\label{estI}
\begin{split}
I&=\int_{\Omega}|\nabla u_n|^{p-2}\nabla u_n\nabla v_n\,dx\\
&=\int_{\Omega}\phi_K\big(|\nabla u_n|^{p-2}\nabla u_n-|\nabla u|^{p-2}\nabla u\big)\nabla T_{\mu}\big((u_n-u)\big)\,dx\\
&\quad+\int_{\Omega}\phi_K|\nabla u|^{p-2}\nabla u\nabla T_{\mu}\big((u_n-u)\big)\,dx+\int_{\Omega}T_{\mu}\big((u_n-u)\big)|\nabla u_n|^{p-2}\nabla u_n\nabla\phi_K\,dx\\
&=I_1+I_2+I_3.
\end{split}
\end{equation}
\textbf{Estimate of $I_2$:} Using \eqref{consewk} we get $T_{\mu}\big((u_n-u)\big)\rightharpoonup 0$ weakly in $W^{1,p}_{\mathrm{loc}}(\Omega)$. As a consequence,
\begin{equation}\label{estI_2}
\lim_{n\to\infty}I_2=\lim_{n\to\infty}\int_{\Omega}\phi_K|\nabla u|^{p-2}\nabla u\nabla T_{\mu}\big((u_n-u)\big)\,dx=0.
\end{equation}
\textbf{Estimate of $I_3$:} For $\omega=\mathrm{supp}\,\phi_K$, by H\"older's inequality, using the uniformly boundedness of $\{u_n\}$ in $W^{1,p}(\omega)$, we have
\begin{multline}\label{estI_3}
|I_3|=\left|\int_{\Omega}T_{\mu}\big((u_n-u)\big)|\nabla u_n|^{p-2}\nabla u_n\nabla\phi_K\,dx\right|
\leq\int_{\Omega}|\nabla\phi_K\|u_n-u\|\nabla u_n|^{p-1}\,dx\\
\leq\Vert\nabla\phi_K\Vert_{L^\infty(\omega)}\left(\int_{\omega}|\nabla u_n|^p\,dx\right)^\frac{p-1}{p}\Vert u_n-u\Vert_{L^p(\omega)}
\leq C\Vert u_n-u\Vert_{L^p(\omega)},
\end{multline}
for some constant $C>0$, independent of $n$. Thus using \eqref{consest} in \eqref{estI_3}, we obtain
\begin{equation}\label{estI_3final}
\lim_{n\to\infty}I_3=\lim_{n\to\infty}\int_{\Omega}T_{\mu}\big((u_n-u)\big)|\nabla u_n|^{p-2}\nabla u_n\nabla\phi_K\,dx=0.
\end{equation}
Therefore, using the estimates \eqref{estI_2} and \eqref{estI_3final} in \eqref{estI} we obtain
\begin{equation}\label{estIfinal}
\limsup_{n\to\infty}I=\limsup_{n\to\infty}I_1=\limsup_{n\to\infty}\int_{\Omega}\phi_K\big(|\nabla u_n|^{p-2}\nabla u_n-|\nabla u|^{p-2}\nabla u\big)\nabla T_{\mu}\big((u_n-u)\big)\,dx.
\end{equation}
\textbf{Estimate of $J$:} We have
\begin{equation}\label{estJ}
\begin{split}
J&=\int\limits_{\mathbb{R}^N}\int\limits_{\mathbb{R}^N}\mathcal{A}\big(u_n(x,y)\big)\big(v_n(x)-v_n(y)\big)\,d\mu\\
&=\int\limits_{\mathbb{R}^N}\int\limits_{\mathbb{R}^N}\phi_K(x)\Big(\mathcal{A}\big(u_n(x,y)\big)-\mathcal{A}\big(u(x,y)\big)\Big)\Big(T_{\mu}\big((u_n-u)(x)\big)-T_{\mu}\big((u_n-u)(y)\big)\Big)\,d\mu\\
&\quad+\int\limits_{\mathbb{R}^N}\int\limits_{\mathbb{R}^N}T_{\mu}\big((u_n-u)(y)\big)\mathcal{A}\big(u_n(x,y)\big)\big(\phi_K(x)-\phi_K(y)\big)\,d\mu\\
&\quad\quad+\int\limits_{\mathbb{R}^N}
\int\limits_{\mathbb{R}^N} T_{\mu}\big((u_n-u)(y)\big)\mathcal{A}\big( u(x,y)\big)\big(\phi_K(y)-\phi_K(x)\big)\,d\mu\\
&\quad\quad\quad+\int\limits_{\mathbb{R}^N}\int\limits_{\mathbb{R}^N}\mathcal{A}\big(u(x,y)\big)\Big(\phi_K(x)T_{\mu}\big((u_n-u)(x)\big)-\phi_K(y)T_{\mu}\big((u_n-u)(y)\big)\Big)\,d\mu\\
&=J_1+J_2+J_3+J_4.
\end{split}
\end{equation}
\textbf{Estimate of $J_1$:} We claim that 
$$
J_1=\int\limits_{\mathbb{R}^N}\int\limits_{\mathbb{R}^N}\phi_K(x)\Big(\mathcal{A}\big(u_n(x,y)\big)-\mathcal{A}\big(u(x,y)\big)\Big)\Big(T_{\mu}\big((u_n-u)(x)\big)-T_{\mu}\big((u_n-u)(y)\big)\Big)\,d\mu\geq 0.
$$
To this end, it is enough to prove that, for almost every $(x,y)\in\mathbb{R}^N\times\mathbb{R}^N$, we have 
\begin{align*}
\hat{J}_{1}&=\phi_K(x)\Big(\mathcal{A}\big(u_n(x,y)\big)-\mathcal{A}\big(u(x,y)\big)\Big)\Big(T_{\mu}\big((u_n-u)(x)\big)-T_{\mu}\big((u_n-u)(y)\big)\Big)\geq 0.
\end{align*}
We observe that
$$
\mathbb{R}^N\times\mathbb{R}^N=\cup_{i=1}^{4}S_i,
$$
where
$$
S_1=\Big\{(x,y)\in\mathbb{R}^N\times\mathbb{R}^N:|(u_n-u)(x)|\leq\mu,\, |(u_n-u)(y)|\leq\mu\Big\},
$$

$$
S_2=\Big\{(x,y)\in\mathbb{R}^N\times\mathbb{R}^N:|(u_n-u)(x)|\leq\mu<|(u_n-u)(y)|\Big\},
$$

$$
S_3=\Big\{(x,y)\in\mathbb{R}^N\times\mathbb{R}^N:|(u_n-u)(y)|\leq\mu< |(u_n-u)(x)|\Big\}
$$
and
$$
S_4=\Big\{(x,y)\in\mathbb{R}^N\times\mathbb{R}^N:|(u_n-u)(x)|>\mu,\, |(u_n-u)(y)|>\mu\Big\}.
$$
\begin{enumerate}
\item[Case $1$.] Let $x,y\in S_1$. Then, $|(u_n-u)(x)|\leq\mu$ and $|(u_n-u)(y)|\leq\mu$, which gives 
$$
T_{\mu}\big((u_n-u)(x)\big)=(u_n-u)(x) \text{ and }T_{\mu}\big((u_n-u)(y)\big)=(u_n-u)(y).
$$
Therefore, we have
$$
\hat{J}_{1}=\phi_K(x)\big(\mathcal{A}(u_n(x,y))-\mathcal{A}(u(x,y))\big)((u_n-u)(x)-(u_n-u)(y)),
$$
which is nonnegative, by Lemma \ref{AI}.

\item[Case $2$.] Let $x,y\in S_2$. Then, $|(u_n-u)(x)|\leq\mu<|(u_n-u)(y)|$, which gives from \eqref{consegeq} that $u(x)-u_n(x)\leq\mu<u(y)-u_n(y)$. Therefore, we have
\begin{equation}\label{c2J_1}
T_{\mu}\big((u_n-u)(x)\big)-T_{\mu}\big((u_n-u)(y)\big)=(u_n-u)(x)+\mu\geq 0.
\end{equation}
Moreover, in this case, $u_n(x)-u_n(y)>u(x)-u(y)$. 
Thus, by \eqref{consemon}, we obtain
\begin{equation}\label{c21J_1}
\mathcal{A}(u_n(x,y))-\mathcal{A}(u(x,y))\geq 0.
\end{equation}
From \eqref{c2J_1} and \eqref{c21J_1}, it follows that $\hat{J}_1\geq 0$.

\item[Case $3$.] Let $x,y\in S_3$. Then, $|(u_n-u)(y)|\leq\mu<|(u_n-u)(x)|$, which gives from \eqref{consegeq} that $u(y)-u_n(y)\leq\mu<u(x)-u_n(x)$. Therefore, we have
\begin{equation}\label{c3J_1}
T_{\mu}\big((u_n-u)(x)\big)-T_{\mu}\big((u_n-u)(y)\big)=-\mu-(u_n-u)(y)\leq 0.
\end{equation}
Moreover, in this case, we get $u_n(x)-u_n(y)<u(x)-u(y)$. Thus, by \eqref{consemon}, we deduce
\begin{equation}\label{c31J_1}
\mathcal{A}(u_n(x,y))-\mathcal{A}(u(x,y))\leq 0.
\end{equation}
From \eqref{c3J_1} and \eqref{c31J_1}, it follows that $\hat{J}_1\geq 0$.

\item[Case $4$.] Let $x,y\in S_4$. Then, $|(u_n-u)(x)|>\mu$ and $|(u_n-u)(y)|>\mu$, which gives
\begin{equation}\label{c4J_1}
T_{\mu}\big((u_n-u)(x)\big)-T_{\mu}\big((u_n-u)(y)\big)=-\mu+\mu=0.
\end{equation}
Hence, $\hat{J}_1=0$.
\end{enumerate}
Therefore, we have
\begin{equation}\label{estJ_1}
J_1\geq 0.
\end{equation}
Hence, the claim follows.\\
\textbf{Estimate of $J_2$:}
We have 
\begin{align*}
J_2&=\int\limits_{\mathbb{R}^N}\int\limits_{\mathbb{R}^N}T_{\mu}\big((u_n-u)(y)\big)\mathcal{A}\big(u_n(x,y)\big)\big(\phi_K(x)-\phi_K(y)\big)\,d\mu.
\end{align*}
First, we claim that for every $\epsilon>0$, there exists a compact set $\mathcal{K}\subset\mathbb{R}^{2N}$, such that 
\begin{equation}\label{estJ21}
\begin{split}
J_2^{1}&=\int_{\mathbb{R}^{2N}\setminus\mathcal{K}}T_{\mu}\big((u_n-u)(y)\big)\mathcal{A}\big(u_n(x,y)\big)\big(\phi_K(x)-\phi_K(y)\big)\,d\mu\leq\frac{\epsilon}{2},
\end{split}
\end{equation}
for all $n\in\mathbb{N}$. To this end, let us set
$$
\omega=\mathrm{supp}\,\phi_K,\quad\quad\omega'=\mathbb{R}^{2N}\setminus(\omega^c\times\omega^c),
$$
and as $\phi_K\in C_c^{1}(\Om)$, for every $\epsilon>0$, there exists a compact set $\mathcal{L}=\mathcal{L}(\epsilon)\subset\mathbb{R}^{2N}$ such that
\begin{equation}\label{J21app}
\left(\int_{\mathbb{R}^{2N}\setminus\mathcal{L}}\frac{|\phi_K(x)-\phi_K(y)|^p}{|x-y|^{N+ps}}\,dx dy\right)^\frac{1}{p}\leq\frac{\epsilon}{2}.
\end{equation}
Then we choose $\mathcal{K}=\mathcal{L}$ and applying H\"older's inequality, we obtain
\begin{multline}\label{estJ211}
J_2^{1}=\int_{\mathbb{R}^{2N}\setminus\mathcal{K}}T_{\mu}\big((u_n-u)(y)\big)\mathcal{A}\big(u_n(x,y)\big)\big(\phi_K(x)-\phi_K(y)\big)\,d\mu\\
\leq\left(\int_{\omega'\setminus\mathcal{K}}\frac{|u_n(x)-u_n(y)|^p}{|x-y|^{N+ps}}\,dx dy\right)^\frac{p-1}{p}
\left(\int_{\omega'\setminus\mathcal{K}}\frac{|T_{\mu}\big((u_n-u)(y)\big)|^p|\phi_K(x)-\phi_K(y)|^p}{|x-y|^{N+ps}}\,dx dy\right)^\frac{1}{p}\\
\leq\mu\left(\int_{\omega'\setminus\mathcal{K}}\frac{|u_n(x)-u_n(y)|^p}{|x-y|^{N+ps}}\,dx dy\right)^\frac{p-1}{p}\left(\int_{\omega'\setminus\mathcal{K}}\frac{|\phi_K(x)-\phi_K(y)|^p}{|x-y|^{N+ps}}\,dx dy\right)^\frac{1}{p}.
\end{multline}
If $0<\delta\leq 1$, then using the estimate \eqref{J21app} and from \eqref{gradlesseq} the uniform boundedness of $\{u_n\}$ over $W_0^{1,p}(\Omega)$ in \eqref{estJ211} the claim \eqref{estJ21} follows. If $\delta>1$, first we note that by Lemma \ref{approx}, there exists a constant $C(\omega)>0$, independent of $n$ such that $u_n\geq C(\omega)$ in $\omega$. Choosing $q=\frac{\delta+p-1}{p}>1$ in [\eqref{cneqn}, Lemma \ref{Cnlemma}], we obtain
\begin{equation}\label{estJ212}
\frac{|u_n(x)-u_n(y)|^p}{|x-y|^{N+ps}}\leq C(\omega)^{1-\delta}\frac{\left|u_n(x)^\frac{\delta+p-1}{p}-u_n(y)^\frac{\delta+p-1}{p}\right|^p}{|x-y|^{N+ps}},
\end{equation}
for almost every $(x,y)\in\omega'$. Hence, using \eqref{estJ212} in \eqref{estJ211}, we get
\begin{equation}\label{estJ213}
\begin{split}
J_2^{1}&\leq\mu\left(\int_{\mathbb{R}^{2N}}\frac{\left|u_n(x)^\frac{\delta+p-1}{p}-u_n(y)^\frac{\delta+p-1}{p}\right|^p}{|x-y|^{N+ps}}\,dx dy\right)^\frac{p-1}{p}\left(\int_{\mathbb{R}^{2N}\setminus\mathcal{K}}\frac{|\phi_K(x)-\phi_K(y)|^p}{|x-y|^{N+ps}}\,dx dy\right)^\frac{1}{p}\leq \frac{\epsilon}{2},
\end{split}
\end{equation}
where in the last line above, we have used the estimate \eqref{J21app} and the uniform boundedness of $\left\{u_n^\frac{\delta+p-1}{p}\right\}$ in $W_0^{1,p}(\Om)$ that follows from \eqref{gradgrt}. Hence the claim \eqref{estJ21} follows. On the other hand, let $E\subset\mathcal{K}$ be an arbitrary measurable set. Then, proceeding analogous to \eqref{estJ211} and \eqref{estJ212}, for some uniform positive constant $C$, we have
\begin{equation}\label{estJ22n}
\begin{split}
J_2^{2}&=\int_{E}T_{\mu}\big((u_n-u)(y)\big)\mathcal{A}\big(u_n(x,y)\big)\big(\phi_K(x)-\phi_K(y)\big)\,d\mu\\
&\leq C\left(\int_{E}\frac{|\phi_K(x)-\phi_K(y)|^p}{|x-y|^{N+ps}}\,dx dy\right)^\frac{1}{p}.
\end{split}
\end{equation}
Thus, if $|E|\to 0$, we have $J_2^{2}\to 0$, uniformly in $n$. Note that
$$
T_{\mu}\big((u_n-u)(y)\big)\mathcal{A}\big(u_n(x,y)\big)\big(\phi_K(x)-\phi_K(y)\big)\to 0\text{ almost everywhere in }\mathbb{R}^{2N}.
$$
Then, by Vitali's theorem for any $\epsilon>0$, there exists $n_0$ such that, if $n\geq n_0$, 
\begin{equation}\label{J22lim}
\int_{\mathcal{K}}T_{\mu}\big((u_n-u)(y)\big)\mathcal{A}\big(u_n(x,y)\big)\big(\phi_K(x)-\phi_K(y)\big)\,d\mu\leq\frac{\epsilon}{2}.
\end{equation}
Therefore, from \eqref{estJ21} and \eqref{J22lim}, we obtain
\begin{equation}\label{estJ_2}
\lim_{n\to\infty}J_2=0.
\end{equation}
\textbf{Estimate of $J_3$:} Recalling from \eqref{gradlesseq} that for $0<\delta\leq 1$, $u\in W_0^{1,p}(\Om)$ and by \eqref{gradgrt} for $\delta>1$, $u\in W^{1,p}_{\mathrm{loc}}(\Omega)$ such that $u^\frac{\delta+p-1}{p}\in W_0^{1,p}(\Omega)$, we can proceed with exactly the same steps followed to estimate $J_2$ above to obtain
\begin{equation}\label{estJ_3}
\lim_{n\to\infty}J_3=\lim_{n\to\infty}\int\limits_{\mathbb{R}^N}
\int\limits_{\mathbb{R}^N} T_{\mu}\big((u_n-u)(y)\big)\mathcal{A}\big( u(x,y)\big)\big(\phi_K(y)-\phi_K(x)\big)\,d\mu=0.
\end{equation}
\textbf{Estimate of $J_4$:}
For $v_n=\phi_K T_{\mu}\big((u_n-u)\big)$, we have 
\begin{align*}
J_4&=\int\limits_{\mathbb{R}^N}\int\limits_{\mathbb{R}^N}\mathcal{A}\big(u(x,y)\big)\big(v_n(x)-v_n(y)\big)\,d\mu.
\end{align*}
Firstly, we claim that for every $\epsilon>0$, there exists a compact set $\mathcal{K}\subset\mathbb{R}^{2N}$, such that 
\begin{equation}\label{estJ244}
\begin{split}
J_4^{1}&=\int_{\mathbb{R}^{2N}\setminus\mathcal{K}}\mathcal{A}\big(u(x,y)\big)\big(v_n(x)-v_n(y)\big)\,d\mu\leq\frac{\epsilon}{2},
\end{split}
\end{equation}
for all $n\in\mathbb{N}$. To this end, let us set
$$
\omega=\mathrm{supp}\,\phi_K,\quad\quad\omega'=\mathbb{R}^{2N}\setminus(\omega^c\times\omega^c).
$$
By Lemma \ref{approx}, there exists $C(\omega)>0$, independent of $n$ such that $u_n\geq C(\omega)$ in $\omega$. Thus, for $\delta>1$, again applying [\eqref{cneqn}, Lemma \ref{Cnlemma}] for $q=\frac{\delta+p-1}{p}>1$, we obtain
\begin{equation}\label{estJ214}
\frac{|u(x)-u(y)|^p}{|x-y|^{N+ps}}\leq C(\omega)^{1-\delta}\frac{\left|u(x)^\frac{\delta+p-1}{p}-u(y)^\frac{\delta+p-1}{p}\right|^p}{|x-y|^{N+ps}},
\end{equation}
for almost every $(x,y)\in\omega'$. Therefore, using \eqref{estJ214} and from \eqref{gradgrt}, using the fact $u^\frac{\delta+p-1}{p}\in W_0^{1,p}(\Om)$, we have for every $\epsilon>0$, there exists a compact set $\mathcal{K}=\mathcal{K}(\epsilon)\subset\mathbb{R}^{2N}$ such that
\begin{equation}\label{J24app}
\left(\int\limits_{\mathbb{R}^{2N}\setminus\mathcal{K}}\frac{|u(x)-u(y)|^p}{|x-y|^{N+ps}}\,dx dy\right)^\frac{1}{p}
\leq C(\omega)^{1-\delta}\left(\int\limits_{\mathbb{R}^{2N}\setminus\mathcal{K}}\frac{|u(x)^\frac{\delta+p-1}{p}-u(y)^\frac{\delta+p-1}{p}|^p}{|x-y|^{N+ps}}\,dx dy\right)^\frac{1}{p}\leq\frac{\epsilon}{2}.
\end{equation}
Using the estimate \eqref{J24app}, the uniform boundedness of $v_n$ in $W_0^{1,p}(\Om)$ and H\"older's inequality, we obtain
\begin{multline}\label{estJ2124}
J_4^{1}=\int_{\mathbb{R}^{2N}\setminus\mathcal{K}}\mathcal{A}\big(u(x,y)\big)\big(v_n(x)-v_n(y)\big)\,d\mu\\
\leq\left(\int_{\omega'\setminus\mathcal{K}}\frac{|u(x)-u(y)|^p}{|x-y|^{N+ps}}\,dx dy\right)^\frac{p-1}{p}\left(\int_{\omega'\setminus\mathcal{K}}\frac{|v_n(x)-v_n(y)|^p}{|x-y|^{N+ps}}\,dx dy\right)^\frac{1}{p}
\leq\frac{\epsilon}{2},
\end{multline}
which proves \eqref{estJ244}. For $0<\delta\leq 1$, noting \eqref{gradlesseq} and arguing similarly as above, we obtain \eqref{estJ244}. On the other hand, let $E\subset\mathcal{K}$ be an arbitrary measurable set. Then, proceeding analogously to \eqref{estJ2124}, for some uniform positive constant $C$,
\begin{equation}\label{estJ22}
J_4^{2}=\int_{E}\mathcal{A}\big(u(x,y)\big)\big(v_n(x)-v_n(y)\big)\,d\mu
\leq C\left(\int_{E}\frac{|u(x)-u(y)|^p}{|x-y|^{N+ps}}\,dx dy\right)^\frac{1}{p}.
\end{equation}
Thus, if $|E|\to 0$,we have $J_4^{2}\to 0$, uniformly in $n$. Note that
$$
\mathcal{A}\big(u(x,y)\big)\big(v_n(x)-v_n(y)\big)\to 0\text{ almost everywhere in }\mathbb{R}^{2N}.
$$
Then, by Vitali's theorem for any $\epsilon>0$, there exists $n_0$ such that, if $n\geq n_0$, 
\begin{equation}\label{J42lim}
\int_{\mathcal{K}}\mathcal{A}\big(u(x,y)\big)\big(v_n(x)-v_n(y)\big)\,d\mu\leq\frac{\epsilon}{2}.
\end{equation}
Therefore, from \eqref{estJ244} and \eqref{J42lim}, we obtain
\begin{equation}\label{estJ4final}
\lim_{n\to\infty}J_4=0.
\end{equation}
Now, employing the estimates \eqref{estJ_1}, \eqref{estJ_2}, \eqref{estJ_3} and \eqref{estJ4final} in \eqref{estJ} we obtain
\begin{equation}\label{estJfinal}
\lim_{n\to\infty}J\geq 0.
\end{equation}
\textbf{Estimate of $R$:} Recalling that $\mathrm{supp}\,\phi_K=\omega$, by Lemma \ref{approx}, there exists a constant $C(\omega)>0$, independent of $n$, such that $u\geq C(\omega)>0$ in $\Om$. Hence, we have
\begin{equation}\label{estK}
R=\int_{\Om}\frac{f_n}{\big(u_n+\frac{1}{n}\big)^\delta}v_n\,dx
\leq\frac{\|f\|_{L^1(\Om)}}{C(\omega)^\delta}\mu.
\end{equation}
Therefore, for every fixed $\mu>0$, using the estimates \eqref{estIfinal}, \eqref{estJfinal} and \eqref{estK} in \eqref{gs1eqn1}, we obtain
\begin{equation}\label{gest}
\begin{split}
\lim\sup_{n\to\infty}\int_{K}\big(|\nabla u_n|^{p-2}\nabla u_n-|\nabla u|^{p-2}\nabla u\big)\nabla T_{\mu}\big((u_n-u)\big)\,dx\leq C\mu,
\end{split}
\end{equation}
for some constant $C=C\big(\omega,\|f\|_{L^1(\Om)}\big)>0$.\\
\textbf{Step $2$.} Let us define the function
\begin{equation}\label{en}
e_n(x)=\big(|\nabla u_n|^{p-2}\nabla u_n-|\nabla u|^{p-2}\nabla u\big)\nabla(u_n-u).
\end{equation}
Note that by Lemma \ref{AI}, we have $e_n\geq 0$ in $\Omega$. We divide the compact set $K$ by
$$
E_n^{\mu}=\big\{x\in K:|(u_n-u)(x)|\leq\mu\big\},\,F_n^{\mu}=\big\{x\in K:|(u_n-u)(x)|>\mu\big\}.
$$
Let $\gamma\in(0,1)$ be fixed. Then, from H\"older's inequality,
\begin{equation}\label{enest}
\int_{K}e_n^{\gamma}\,dx=\int_{E_n^{\mu}}e_n^{\gamma}\,dx+\int_{F_n^{\mu}}e_n^{\gamma}\,dx
\leq \left(\int_{E_n^{\mu}}e_n\,dx\right)^\gamma|E_n^{\mu}|^{1-\gamma}+\left(\int_{F_n^{\mu}}e_n\,dx\right)^\gamma|F_n^{\mu}|^{1-\gamma}.
\end{equation}
Now, since $\{u_n\}$ is uniformly bounded in $W^{1,p}(K)$, the sequence $\{e_n\}$ is uniformly bounded in $L^1(K)$. Furthermore, $\lim_{n\to\infty}|F_n^{\mu}|=0$. Hence, from \eqref{gest} and \eqref{enest}, we have
\begin{equation}\label{enfinalest}
\lim\sup_{n\to\infty}\int_{K}e_n^{\gamma}\,dx\leq\lim\sup_{n\to\infty}\left(\int_{E_n^{\mu}}e_n\,dx\right)^\gamma|E_n^{\mu}|^{1-\gamma}
\leq\big(C\mu\big)^\gamma|\Omega|^{1-\gamma}.  
\end{equation}
Letting $\mu\to 0$ in \eqref{enfinalest}, the sequence $\Big\{e_n^{\gamma}\Big\}$ converges to $0$ strongly in $L^1(K)$. Therefore, using a sequence of compact sets $K$, upto a subsequence
$$
e_n(x)\to 0\text{ almost everywhere in }\Omega,
$$
which along with Lemma \ref{AI} gives
$$
\nabla u_n(x)\to\nabla u(x)\text{ for almost every }x\in\Omega.
$$
Hence, the result follows.
\end{proof}

\end{document}